\documentclass{article}

\usepackage[
  left=1.25in,
  right=1.25in,
  top=1.2in,
  bottom=1.2in
]{geometry}

\usepackage{amsmath, amssymb, amsthm}
\usepackage{mathtools}
\usepackage{xcolor}
\usepackage{hyperref}
\usepackage{enumitem}
\usepackage[style=numeric-comp,hyperref=true,doi=false,url=false,
            isbn=false,giveninits=true,sorting=none,
            block=none,backend=bibtex,maxnames=99]{biblatex}
\newcommand{\F}{\mathbb{F}}

\newcommand{\coinf}{\mathrm{C}_{\mathsf{co}}^\infty}
\newcommand{\N}{\mathbb{N}}
\newcommand{\R}{\mathbb{R}}
\newcommand{\C}{\mathbb{C}}
\newcommand{\bfo}{\mathbf{1}}
\newcommand{\tr}{\operatorname{tr}}
\newcommand{\frakg}{\mathfrak{g}}
\newcommand{\frakh}{\mathfrak{h}}

\newcommand{\xalg}{\mathfrak X_{\mathrm{alg}}}
\newcommand{\xc}[1]{\vspace{.1cm}
\noindent {\em #1} }

\newtheorem{theorem}{Theorem}
\newtheorem{proposition}[theorem]{Proposition}

\newtheorem{lemma}{Lemma}
\theoremstyle{definition}
\newtheorem{remark}{Remark}

\theoremstyle{plain}
\newtheorem{mainthm}{Theorem}

\newtheorem{maincor}{Corollary}

\newtheorem{controlcor}{Corollary}

\title{Algebraic Vector Fields of Connected Semi-simple Matrix Groups are $3$-generated}

\author{Yiyang Jiang$^*$ \quad and \quad Xudong Chen\footnote{Y. Jiang and X. Chen are with the Department of Electrical \& Systems Engineering, Washington University, St.~Louis, MO 63130, USA. Emails: {\tt\small \char123 j.yiyang, cxudong\char125 @wustl.edu}.
}}

\date{}

\begin{document}

\maketitle

{\renewcommand{\thefootnote}{}\footnotetext{\emph{2020 Mathematics Subject Classification.} Primary 17B66; Secondary 22E46, 32M17, 58D05, 93B05.}}

\begin{abstract}
We show that the Lie algebra of algebraic vector fields for any connected, semi-simple matrix group over $\F$, for $\F = \R, \C$,  can be generated by three $\F$-complete algebraic vector fields.
\end{abstract}

\section{Introduction and main results}\label{sec:introduction}

Let $\F$ be either $\R$ or $\C$.
Let $G$ be a connected, semi-simple matrix group over $\F$, by which we mean a connected, closed subgroup of the general linear group $\mathrm{GL}(\mathbb{V})$ for some finite-dimensional vector space $\mathbb{V}$ over $\F$, whose Lie algebra $\frakg$ over~$\F$ is semi-simple; for $\F = \C$, we further require that $G$ be a complex Lie group. 
The class of real Lie groups considered in the paper includes, for example, all the identity components of the classical Lie groups given in~\cite[Section~I.17]{Knapp}. 
We also refer the reader to~\cite[Chapter~5]{OnishchikV} for a comprehensive study of these real Lie groups. 
In contrast to the real case, every connected, complex semi-simple Lie group is a matrix group (see, e.g.,~\cite[Chapter~4]{OnishchikV}).

Let $\Phi$ be the space over $\F$ spanned by the matrix coefficients associated with the representation of $G$ on~$\mathbb{V}$, and $\mathcal{A}(\Phi)$ be the unital algebra generated by $\Phi$.
A vector field $V$ on $G$ is viewed as a map $V:G\to\operatorname{End}(\mathbb{V})$.
Let $\mathfrak{X}(G)$ be the space of smooth (resp., holomorphic) vector fields on~$G$ for $\F=\R$ (resp., $\F=\C$). 
The vector field $V$ is said to be \emph{algebraic} if, in a basis of $\mathbb{V}$, all matrix entries of $V$ belong to $\mathcal{A}(\Phi)$. Let $\xalg(G)$ be the space of algebraic vector fields.
We say that $V$ is {\it $\F$-complete} (or simply {\it complete} if there is no ambiguity) if its flow $e^{tV}: G\to G$ is well defined for all $t\in \F$.
Given a set $S\subseteq \mathfrak{X}(G)$, let $\operatorname{Lie}(S)$ be the Lie algebra over $\F$ generated by~$S$.

The main result of the paper is the following:

\begin{mainthm}\label{thm:main}
For any connected, semi-simple matrix group $G$ over $\F$,
there exist three $\F$-complete algebraic vector fields $W_1$, $W_2$, and $W_3$ such that
$\xalg(G) = \operatorname{Lie}\{W_1,W_2,W_3\}$.
\end{mainthm}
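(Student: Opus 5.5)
We will exploit the structure $\xalg(G)\cong \mathcal{A}(\Phi)\otimes\frakg$. Trivialize $TG$ by left-invariant vector fields: fix a basis $X_1,\dots,X_n$ of $\frakg$ and write $L_X(g)=gX$. Every algebraic vector field is then uniquely $V=\sum_i f_i L_{X_i}$ with $f_i\in\mathcal{A}(\Phi)$. This holds because $g\mapsto g^{-1}V(g)$ has entries in $\mathcal{A}(\Phi)$: $\det g$ is a nonvanishing character, so $1/\det$ lies in $\mathcal{A}(\Phi)$ for connected semisimple $G$, where characters are trivial. Thus $\xalg(G)=\mathcal{A}(\Phi)\otimes\frakg$ as a vector space. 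The bracket is
\[
[fL_X,hL_Y]=fh\,L_{[X,Y]}+f\,(L_Xh)\,L_Y-h\,(L_Yf)\,L_X .
\]
Two facts about $\mathcal{A}(\Phi)$ will be used. Left and right translations act on it locally finitely. As a $G\times G$-module it decomposes, by Peter--Weyl and Burnside type arguments, into matrix coefficient spaces $\Phi_\rho\cong\rho^*\otimes\rho$ of the irreducible representations $\rho$ occurring in tensor powers of $\mathbb V$. The plan is to generate first all of $\frakg$ (both $L_X$ and the right-invariant fields $R_X$), and then all of $\mathcal{A}(\Phi)\otimes\frakg$ using the module structure.

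\emph{Step 1 (choice of generators).} Semisimple $\frakg$ is generated by two elements $A,B$; this is the Kuranishi result. Take $W_1=L_A$ and $W_2=L_B$; both are complete, with flow $g\mapsto g e^{tA}$. Then $\operatorname{Lie}\{W_1,W_2\}\supseteq L_{\frakg}$. For $W_3$, take a complete field of the form $W_3=\phi\,L_C$, where $\phi\in\mathcal{A}(\Phi)$ is $R_C$-invariant, i.e.\ annihilated by $L_C$. Then $W_3$ is complete, since $\phi$ is constant along the flow lines of $L_C$. A natural candidate is $C$ semisimple and regular and $\phi$ a suitable matrix coefficient $g\mapsto \langle u, g v\rangle$ with $Cv=0$, or more flexibly a generic combination of such coefficients across several irreducible pieces. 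Over $\R$ one must choose these so that they are real.

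\emph{Step 2 (spreading).} Brackets with $L_X$ act as
\[
[L_X,\phi L_Y]=(L_X\phi)L_Y+\phi L_{[X,Y]}.
\]
So $\operatorname{Lie}(W)$ contains the $\frakg$-module generated by $\phi\otimes C$ under the diagonal action (right-regular action on functions and adjoint action on $\frakg$). If $\phi$ is chosen with nonzero components in every isotypic piece $\Phi_\rho\otimes\frakg$ that is needed, a Vandermonde/Casimir separation argument isolates the components. Concretely, apply polynomials in the Casimir and in $\operatorname{ad}$ of Cartan elements, which are all available from $L_{\frakg}$; however, only finitely many pieces can be reached this way. To obtain the whole infinite-dimensional algebra, use the brackets themselves. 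Products $fh$ appear via $[fL_X,hL_Y]$ whenever $[X,Y]\neq0$, and after subtracting the derivative terms, which are obtainable inductively, we get closure under multiplication. Since $\mathcal{A}(\Phi)$ is generated as an algebra by $\Phi$, it then suffices to show that $\Phi\otimes\frakg\subseteq\operatorname{Lie}(W)$, plus the constants $1\otimes\frakg$, which we already have.

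\emph{Step 3 (main obstacle).} The hard part is this multiplicative closure together with the isotypic separation. In the bracket formula the product term $fh\,L_{[X,Y]}$ is entangled with the derivative terms. I would handle this by an induction on the filtration $\mathcal{A}_k=\operatorname{span}$ of products of at most $k$ elements of $\Phi$. The claim is that $\mathcal{A}_k\otimes\frakg\subseteq\operatorname{Lie}(W)$. The derivative terms lie in $\mathcal{A}_{k}\otimes\frakg$, since $L_X$ preserves $\mathcal{A}_k$. Using $[\frakg,\frakg]=\frakg$ and simplicity of the factors, the $L_{[X,Y]}$ terms span $fh\otimes\frakg$ modulo $\mathcal{A}_k\otimes\frakg$. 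The base case $\Phi\otimes\frakg$ requires that $\phi\otimes C$ generate the $\frakg$-module $\Phi\otimes\frakg$ modulo the pieces already obtained. I expect to need genericity of $\phi$ and $C$ here, together with a careful treatment of multiplicities of $\rho$ in $\Phi$ and of the real forms. If a single $W_3$ proves insufficient for the base case, I would fall back on first producing $\Phi\otimes\frakg$ from finitely many brackets involving $W_3$ and its $R$-translates, and then proceed with the same induction.
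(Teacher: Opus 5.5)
There is a genuine gap, in fact two, and both sit exactly where the paper does its real work.

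\textbf{The induction step.} Your Step~3 rests on a false claim. For $f\in\Phi$ and $h\in\mathcal{A}_k(\Phi)$, the terms $f\,(L_Xh)\,L_Y$ and $h\,(L_Yf)\,L_X$ of $[fL_X,hL_Y]$ lie in $\mathcal{A}_{k+1}(\Phi)\frakg_L$. That is the same filtration level as $fh\,L_{[X,Y]}$, not $\mathcal{A}_k(\Phi)\frakg_L$: $L_X$ preserves $\mathcal{A}_k$, but you then multiply by $f$. Passing to the associated graded does not help, since the bracket formula has the same shape there. So these terms are not ``obtainable inductively''. The equality $[\Phi\frakg_L,\mathcal{A}_{n-1}(\Phi)\frakg_L]=\mathcal{A}_n(\Phi)\frakg_L$ is a genuine result (Theorem~\ref{thm:liealgebra}), which the paper proves weight by weight. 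For $\xi=\phi_1\cdots\phi_n$ with $\phi_i\in\Phi_{\lambda_i}$, the term $\xi H$ is isolated via $[\phi_iH,\xi_{-i}H]=\delta_i(H)\,\xi H$ whenever some $\lambda_i\neq\frac12\sum_j\lambda_j$. The degenerate cases ($n=2$ with $\lambda\neq0$, or all $\lambda_i=0$) need three further ingredients: the $\mathfrak{sl}_2$-decomposition of Lemma~\ref{lem:sl2rep}, explicit bracket combinations with provably nonzero coefficients, and the hypothesis that $\Phi$ contains no nonzero constants. Only after that are the $\xi X_\alpha$ obtained. Your proposal has no substitute for this.

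\textbf{The base case.} The inclusion $\Phi\frakg_L\subseteq\operatorname{Lie}\{W_1,W_2,W_3\}$ is also left open. As you formulate it (the $\frakg$-module generated by $\phi\otimes C$), it cannot work in general with $W_1,W_2\in\frakg_L$. The space $\mathcal{U}(\frakg_L)W_3$, even modulo $\frakg_L$, is a cyclic finite-dimensional $\frakg$-module, so its $\frakg_L$-invariant part is at most one-dimensional. By contrast, if $\mathbb{V}$ contains the adjoint representation (as~\eqref{eq:defV} does), $\Phi\frakg_L$ contains all right-invariant fields $R_Y$, a $\dim\frakg$-dimensional space of $\frakg_L$-invariants. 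Your fallback invokes ``$R$-translates'' of $W_3$, but no right-invariant field lies in your Lie algebra.

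The paper's key choice is to let $W_1,W_2$ generate $\frakg_L\oplus\frakg_R$ (Kuranishi applied to $\frakg\oplus\frakg$, Lemma~\ref{lem:firsttwo}). Since $[R_Y,\phi L_X]=(R_Y\phi)L_X$, the $\frakg_R$-action moves only the coefficient. Thus $W_3=\phi_IL_Z$, with the cyclic vector $\phi_I$ of Lemma~\ref{lem:trace-coefficient-cyclic}, yields $\Phi L_Z$, and an ideal argument in the $\frakg$-slot then yields $\Phi\frakg_L$.

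This requires a coefficient that is cyclic under $\frakg_R$, and your completeness mechanism destroys that. For $\phi=\phi_A\in\Phi$, the condition $L_C\phi_A=\phi_{CA}=0$ forces $\operatorname{im}A\subseteq\ker C$. Hence $\mathcal{U}(\frakg_R)\phi_A=\{\phi_{AB}\}$ consists only of coefficients $\phi_{B'}$ with $\operatorname{im}B'\subseteq\ker C$, a proper subspace. The paper sidesteps this in the real case by taking $Z$ in the compact parts $\mathfrak{k}_\ell$ and getting completeness from compactness (the flow stays in $xK$). In the complex case, completeness forces $\phi_P$ with $L_Q^2\phi_P=0$. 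There the paper needs the additional arguments of Propositions~\ref{prop:minuscule} and~\ref{prop:adjoint}, including Lemma~\ref{lem:adjoint-tensor} and the use of $\frakg_R$ to supply the missing Casimir direction.
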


We equip $\mathfrak{X}(G)$ with the compact-open $\mathrm{C}^\infty$-topology (or simply $\coinf$-topology). 
For $\F = \C$, the $\coinf$-topology is nothing but the compact-open topology by Cauchy's estimate. It is known~\cite[Theorem~3.4]{TV06} that $\xalg(G)$ is dense in $\mathfrak{X}(G)$. 
For $\F = \R$, we will show in Lemma~\ref{lem:smooth-Vfieldapproximate}, Section~\ref{sec:consequences} that $\xalg(G)$ is dense in $\mathfrak{X}(G)$.

The so-called {\it density property} is introduced in~\cite{Varolin01,TV06} for complex manifolds~$M$, which requires that every holomorphic vector field on $M$ can be approximated, uniformly on compact sets, by Lie combinations of complete holomorphic vector fields.
It has been shown in~\cite{TV00} that every complex semi-simple Lie group has the density property.
Borrowing this terminology, we say that a real/complex $M$ has the {\it strong density property} if there is a {\it finite} set $S$ of $\F$-complete vector fields such that $\operatorname{Lie}(S)$ is dense in $\mathfrak{X}(M)$.
It follows directly from Theorem~\ref{thm:main} that any connected, semi-simple matrix group $G$ over~$\F$ has the strong density property.

The fact that a complex~$G$ has the strong density property can also be obtained by combining the following two results:
On one hand, it has been shown~\cite[Theorem~3]{KK08} that every algebraic vector field on~$G$ is a Lie combination of complete algebraic vector fields. On the other hand, it has recently been shown~\cite[Theorem~1.1]{Bouaziz26} 
that $\xalg(M)$, for any smooth complex affine variety~$M$, can be generated by a finite number of algebraic vector fields.

In terms of the number of $\C$-complete vector fields needed for generating $\xalg(M)$, the following is known for complex manifolds~$M$: For the complex special linear group $\mathrm{SL}_2(\C)$, it has been shown in~\cite[Theorem~14]{AndristSLtwo}  that $\xalg(\mathrm{SL}_2(\C))$ is generated by $4$ $\C$-complete vector fields (or simply $4$-generated).
The result has recently been improved~\cite[Proposition~3.6]{AndristRegeta26} that $\xalg(\mathrm{SL}_2(\C))$ is $3$-generated. 
For~$\C^d$ with $d \geq 2$, it has been shown in~\cite[Corollary~2]{BeldievPogudin} that $\xalg(\C^d)$ is $2$-generated, which builds upon the results of~\cite{AndristCn}. 
For $M$ the Danielewski surface $xy=z^2$, $\xalg(M)$ is $5$-generated~\cite{AndristSLtwo}. Further, for any Danielewski surface $M$ of the form $xy=p(z)$, where $p(z)$ is a polynomial with simple zeros, $\xalg(M)$ is $6$-generated~\cite{AndristDanielewski}.

For real manifolds, it has been shown in~\cite[Section~6]{AS22} that $\R^d$, the $d$-torus $\mathbb{T}^d$, and the $2$-sphere $S^2$ have the strong density property, with $2d$, $4d$, and $7$ $\R$-complete generators, respectively.
For $S^2$, we have recently shown in~\cite{CofDS2} that $\xalg(S^2)$ is $2$-generated.

We now describe a relevant consequence of Theorem~\ref{thm:main}.
Let $\operatorname{Diff}(G)$ be the group of smooth (resp., biholomorphic) diffeomorphisms of $G$ for $\F=\R$ (resp., $\F = \C$).  
We equip $\operatorname{Diff}(G)$ with the $\coinf$-topology. 
Again, for $\F=\C$, this topology coincides with the compact-open topology. 
Let $\operatorname{Diff}_0(G)$ be the subgroup of $\operatorname{Diff}(G)$, comprising elements that are smoothly isotopic to the identity map~$\operatorname{id}_G$. 
Let $W_1$, $W_2$, and $W_3$ be given as in the statement of Theorem~\ref{thm:main}, and $\mathcal{W}$ be the subgroup of $\operatorname{Diff}_0(G)$ generated by $e^{tW_i}$, for $t\in \F$ and $i = 1,2,3$.

The following result is a corollary of Theorem~\ref{thm:main}:

\begin{maincor}\label{cor:B}
The subgroup $\mathcal{W}$ is dense in $\operatorname{Diff}_0(G)$  with respect to the $\coinf$-topology.
\end{maincor}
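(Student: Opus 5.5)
The plan is to combine Theorem~\ref{thm:main} with the density of $\xalg(G)$ in $\mathfrak{X}(G)$ (Lemma~\ref{lem:smooth-Vfieldapproximate} for $\F=\R$, \cite[Theorem~3.4]{TV06} for $\F=\C$), and then run the standard Andersén--Lempert type argument that turns density of Lie combinations of complete fields into density of the group they generate. Let $\overline{\mathcal W}$ be the closure of $\mathcal W$ in $\operatorname{Diff}_0(G)$ for the $\coinf$-topology. Since $\mathcal{W}$ is a group, $\overline{\mathcal W}$ is a closed subgroup; composition and inversion are continuous in the $\coinf$-topology on compact sets, which is what this requires. The goal is $\overline{\mathcal W}=\operatorname{Diff}_0(G)$.

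\textbf{Step 1 (the Lie algebra of $\overline{\mathcal W}$).} Let $\mathcal L$ be the set of vector fields $V\in\mathfrak X(G)$ such that, for every compact $K\subset G$ and every bounded time interval, the local flow $e^{tV}$ restricted to $K$ (on its domain) is a $\coinf$-limit of elements of $\mathcal W$. I would prove the following.
\begin{itemize}[leftmargin=*]
\item Each $W_i$ lies in $\mathcal L$.
\item $\mathcal L$ is closed under $\F$-linear combinations. This uses the Trotter product formula $\bigl(e^{tV/n}e^{tU/n}\bigr)^n\to e^{t(U+V)}$.
\item $\mathcal L$ is closed under brackets. This uses the commutator formula $\bigl(e^{-\sqrt{t/n}\,V}e^{-\sqrt{t/n}\,U}e^{\sqrt{t/n}\,V}e^{\sqrt{t/n}\,U}\bigr)^n\to e^{t[U,V]}$.
\item $\mathcal L$ is closed under $\coinf$-limits. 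This follows from continuous dependence of flows on the vector field, uniformly on compacts, with all derivatives.
\end{itemize}
Both product formulas converge in $\coinf$ on compacts by the usual local error estimates, applied to the local flows near the compact set. For $\F=\C$, multiplication by $i$ is handled by $\F$-completeness: the complex-time flows are in $\mathcal W$. Consequently $\mathcal L\supseteq\overline{\operatorname{Lie}\{W_1,W_2,W_3\}}=\overline{\xalg(G)}=\mathfrak X(G)$.

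\textbf{Step 2 (from vector fields to isotopies).} Take $\phi\in\operatorname{Diff}_0(G)$, a smooth isotopy $\phi_s$ from $\operatorname{id}_G$ to $\phi$, a compact set $K$, and a $\coinf$-neighbourhood of $\phi$ on $K$. The isotopy is generated by a time-dependent field $X_s=\bigl(\tfrac{d}{ds}\phi_s\bigr)\circ\phi_s^{-1}$. Subdivide $[0,1]$ into $N$ pieces. On the compact set $\bigcup_s\phi_s(K)$, the composition of the time-$1/N$ flows of the frozen fields $X_{k/N}$ approximates $\phi$ on $K$ in $\coinf$ as $N\to\infty$. This is the standard Euler-scheme estimate for non-autonomous ODEs, and the $X_{k/N}$ need not be complete.

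By Step 1, each time-$1/N$ flow of $X_{k/N}$ is approximated on the relevant compact set by elements of $\mathcal W$. Composing these approximations, and using continuity of composition on compacts, gives an element of $\mathcal W$ in the prescribed neighbourhood of $\phi$.

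\textbf{Main obstacle.} The delicate point is the definition and closure properties of $\mathcal L$ when the vector fields are not complete, since general $V\in\mathfrak X(G)$ have only local flows. The convergence of the Trotter and commutator formulas, and the approximation of flows of limit fields, must all be carried out on compacts, with domains shrinking slightly at each step. I would handle this as in Andersén--Lempert and Varolin: work on a fixed compact set and a fixed time horizon where all flows involved exist, using that flows of nearby fields exist on slightly smaller domains. In the real case the smooth versions of these estimates are the same, since all derivatives are controlled by Gronwall-type bounds for the variational equations.
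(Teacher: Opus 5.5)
Your argument is correct. Its skeleton is exactly the content of the Varolin-type result the paper uses as a black box (Theorem~\ref{thm:varolin}, in the compact-open $\mathrm{C}^0$ topology). That result consists of the closed Lie algebra $\mathcal L$ of vector fields with approximable flows, obtained from the Trotter and commutator formulas and continuous dependence, followed by the Euler scheme for the isotopy.

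The genuine difference is how you obtain $\mathrm{C}^k$ control in the real case. You propose to redo every estimate (product formulas, Euler scheme, dependence on the field) with derivatives, via Gronwall bounds for the variational equations. The paper instead applies the $\mathrm{C}^0$ theorem on $J^k(G,G)$ to the prolonged fields $j^kW_i$. For this it uses $e^{tj^kV}=J^k(e^{tV})$, continuity and bracket-compatibility of $j^k$, and $\partial_tJ^k\Psi_t=(j^kU_t)\circ J^k\Psi_t$. Since the $k$-jet of a flow is the flow of the prolonged field, $\mathrm{C}^0$-closeness on $J^k(G,G)$ is $\mathrm{C}^k$-closeness on $G$. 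The prolongation is therefore a clean packaging of your variational-equation estimates.

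As for the trade-off: your route is self-contained and treats $\R$ and $\C$ uniformly, but you must verify all the $\mathrm{C}^k$ estimates by hand. The paper's route is shorter, but leans on the citation and on the jet-space bookkeeping.

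Two small points to tighten.
\begin{enumerate}
\item In the complex case you must check that $X_s=(\partial_s\phi_s)\circ\phi_s^{-1}$ is holomorphic; otherwise $\mathcal L\supseteq\mathfrak X(G)$ says nothing about the frozen fields $X_{k/N}$. It is holomorphic, since $\partial_s\phi_s$ is a locally uniform limit of holomorphic difference quotients; the paper checks exactly this.
\item Your argument does not show that an arbitrary element of $\mathcal L$ stays in $\mathcal L$ after multiplication by $\mathrm{i}$, so the claim that $\mathcal L$ is closed under $\F$-linear combinations is not justified for $\F=\C$. You do not need it. The real Lie algebra generated by $\{cW_j\mid c\in\C\}$ already equals $\operatorname{Lie}\{W_1,W_2,W_3\}$, by complex bilinearity of the bracket, and each $cW_j\in\mathcal L$ because $W_j$ is $\C$-complete. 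This is how the paper proceeds.
\end{enumerate}
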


It has been shown in~\cite{AC09} that if $V_1,\ldots, V_m$ are smooth vector fields on a real, compact manifold $M$ and if the group generated by $e^{tV_i}$, for $t\in \R$ and $i = 1,\ldots, m$, acts transitively on $M$, then for any $f\in\operatorname{Diff}_0(M)$, there exist an $N\in \N$, a sequence $i_1,\ldots, i_N\in \{1,\ldots, m\}$, and $a_1,\ldots, a_N\in \mathrm{C}^\infty(M)$ such that $f = e^{a_N V_{i_N}}\circ \cdots \circ e^{a_1 V_{i_1}}$.
However, their result does not imply ours as the coefficients $a_1,\ldots, a_N$ in our setting can only be constants.

We further relate the above corollary to geometric control theory.
Let $G$ be a connected, semi-simple matrix group over~$\R$.  
For a nonnegative integer~$k$, let $J^k(G,G)$ be the manifold of $k$-jets of smooth maps from $G$ to $G$. We denote by $j_x^kf$ the $k$-jet of $f\in \mathrm{C}^\infty(G, G)$ at $x\in G$.
We embed $J^k(G,G)$ into a Euclidean space $\R^{N_k}$. 
For smooth $G$-valued maps $f,g$ defined on an open neighborhood of a compact set $K$ in $G$, let
$d_{k,K}(f,g):=\max_{x\in K}\left\|j_x^k f-j_x^k g\right\|$, 
where $\|\cdot\|$ is the standard Euclidean norm.
Now, consider the following control-linear system on the group $G$:
\begin{equation}\label{eq:controlsystem}
\dot x(t) = \sum_{i = 1}^3 u_i(t) W_i(x(t)),
\end{equation}
where each $u_i: [0,\infty)\to \R$, for $i = 1,2,3$, is a locally integrable function, viewed as a scalar control input. For ease of notation, let $u:=(u_1,u_2, u_3)$.
System~\eqref{eq:controlsystem} is said to be {\it approximately controllable} on $\operatorname{Diff}_0(G)$ in the $\coinf$-sense over the time interval $[0,T]$ for a given $T > 0$, if for any $f\in \operatorname{Diff}_0(G)$, any compact set $K\subseteq G$, any nonnegative integer~$k$, and any error tolerance $\epsilon > 0$, there exists a $u\in \mathrm{L}^1([0,T],\R^3)$ such that the flow $g_t$ generated by the time-varying vector field $U_t(x):=\sum_{i = 1}^3 u_i(t) W_i(x)$ is well defined on an open neighborhood of $K$ in $G$ for all $t\in [0,T]$ and, moreover, $d_{k,K}(f, g_T) < \epsilon$.
Since system~\eqref{eq:controlsystem} is control-linear,  approximate controllability over $[0,T]$ for some $T > 0$ implies approximate
controllability over $[0,T]$ for {\it any} $T > 0$.

The following result is then an immediate consequence of Corollary~\ref{cor:B}:

\begin{controlcor}\label{cor:control}
System~\eqref{eq:controlsystem} is approximately controllable on $\operatorname{Diff}_0(G)$ in the $\coinf$-sense.
\end{controlcor}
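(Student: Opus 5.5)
The plan is to deduce Corollary~\ref{cor:control} from Corollary~\ref{cor:B} (with $\F=\R$). The idea is to realize each element of $\mathcal{W}$ exactly as the time-$T$ flow of system~\eqref{eq:controlsystem} under a piecewise-constant control.

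Fix $f\in\operatorname{Diff}_0(G)$, a compact set $K$, an integer $k\ge 0$, and $\epsilon>0$. The argument has three steps.

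\emph{Step 1: approximate $f$ by an element of $\mathcal{W}$.} The map $g\mapsto d_{k,K}(f,g)$ is continuous on $\operatorname{Diff}(G)$ with respect to the $\coinf$-topology, since $K$ is compact and $k$-jets depend continuously on $g$ in this topology. Corollary~\ref{cor:B} then gives an element
\[
g = e^{t_N W_{i_N}}\circ\cdots\circ e^{t_1 W_{i_1}}\in\mathcal{W}
\qquad\text{with}\qquad d_{k,K}(f,g)<\epsilon,
\]
for some $t_j\in\R$ and $i_j\in\{1,2,3\}$.

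\emph{Step 2: realize $g$ as a time-$T$ flow.} Split $[0,T]$ into $N$ consecutive subintervals $I_j$, each of length $T/N$. On $I_j$, set
\[
u_{i_j}\equiv \frac{N t_j}{T}, \qquad u_i\equiv 0 \text{ for } i\neq i_j.
\]
This control is piecewise constant, hence lies in $\mathrm{L}^1([0,T],\R^3)$.

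On $I_j$ the time-varying field $U_t$ equals the constant field $(Nt_j/T)\,W_{i_j}$. Running it for time $T/N$ yields exactly $e^{t_jW_{i_j}}$. Because each $W_i$ is $\R$-complete, every one of these flows is globally defined on $G$, not just near $K$. Composing over the $N$ subintervals gives $g_T=g$, and the flow $g_t$ is well defined on all of $G$ for every $t\in[0,T]$. Therefore $d_{k,K}(f,g_T)<\epsilon$.

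\emph{Step 3: conclude.} The construction in Step 2 works for any $T>0$, so approximate controllability holds on every time interval. This is consistent with the rescaling remark preceding the corollary.

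The only point needing care is the continuity claim in Step 1. The $\coinf$-topology is generated by seminorms controlling derivatives up to each finite order on compact sets. Convergence $g_n\to g$ in this topology therefore implies uniform convergence on $K$ of the $k$-jets in the chosen embedding $J^k(G,G)\subseteq\R^{N_k}$, which is exactly what is needed. With that in hand, the result follows immediately from Corollary~\ref{cor:B}; no step is a real obstacle.
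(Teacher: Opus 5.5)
Your proposal is correct and follows the paper's approach. The paper calls the corollary an immediate consequence of Corollary~\ref{cor:B}, and your argument spells out exactly that deduction. You use density of $\mathcal{W}$ to approximate $f$ in the $d_{k,K}$ sense, then realize any word $e^{t_N W_{i_N}}\circ\cdots\circ e^{t_1 W_{i_1}}\in\mathcal{W}$ exactly as a time-$T$ flow under a piecewise-constant control, with global well-definedness coming from $\R$-completeness of the $W_i$.
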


For relevant work about controllability of diffeomorphisms, we mention~\cite{AS22} in which the authors have considered the class of control-linear systems on real, Riemannian manifolds~$M$. In particular, they have introduced the so-called ``strong Lie algebra approximating property'' (Assumption~3), which, roughly speaking, serves as the same purpose of Theorem~\ref{thm:main} (although the two conditions are significantly different).  Assuming that the control vector fields and their covariant derivatives of each other are uniformly bounded and that the aforementioned property is satisfied, the authors have shown that the control-linear system is approximately controllable on $\operatorname{Diff}_0(M)$ with respect to the compact-open topology.

The remainder of the paper is organized as follows: We gather key notations used throughout the paper at the end of this section. 
In Section~\ref{sec:degree-growth}, we investigate the Lie algebra generated by $\Phi\otimes\frakg_L$ (or simply $\Phi\frakg_L$), 
where $\frakg_L$ is the space of left-invariant vector fields, and show that $\operatorname{Lie}(\Phi\frakg_L)= \mathcal{A}(\Phi)\frakg_L$.  
In Section~\ref{sec:shared}, we exhibit three complete vector fields $W_1$, $W_2$, and $W_3$ such that $\Phi\frakg_L \subseteq \operatorname{Lie}\{W_1,W_2,W_3\}$. 
In Section~\ref{sec:proof-main-result}, we prove Theorem~\ref{thm:main} and Corollary~\ref{cor:B}.

\xc{Notation:} We use $\mathbb{N}$ to denote the set of positive integers and $\mathbb{N}_0 := \N \cup\{0\}$. 
Let~$\frakg$ be the Lie algebra associated with~$G$. 
Let~$\frakg_L$ (resp., $\frakg_R$) be the Lie algebra of left-invariant (resp., right-invariant) vector fields on~$G$.
We identify~$\frakg_L$ with~$\frakg$.
For a given $X\in \frakg$, let $L_X$ (resp., $R_X$) be the corresponding left-invariant (resp., right-invariant) vector field, i.e., $L_X(x)=xX$ and $R_X(x)=Xx$ for any $x\in G$.

Let $\mathfrak{s}$ be an arbitrary semi-simple Lie algebra and $\mathcal{U}(\mathfrak{s})$ be the universal enveloping algebra of~$\mathfrak{s}$. Given an arbitrary representation $\mathbb{V}$ of $\mathfrak{s}$, we use $Xv\in \mathbb{V}$ to denote the action of $X\in \mathcal{U}(\mathfrak{s})$ 
on the vector~$v\in \mathbb{V}$. 
For a subset $S\subseteq \mathbb{V}$, we use $\mathcal{U}(\mathfrak{s})S$ to denote the space spanned by $Xv$ for $X\in \mathcal{U}(\mathfrak{s})$ and $v\in S$, which is the smallest $\mathfrak{s}$-invariant subspace of $\mathbb{V}$ that contains~$S$.

For now, let $\frakg$ be a complex semi-simple Lie algebra. 
Let $\mathfrak{h}$ be a Cartan subalgebra of $\frakg$, $\Delta$ be the set of roots associated with $(\frakg, \frakh)$, $\frakg = \frakh \oplus \bigoplus_{\alpha \in \Delta} \frakg_\alpha$ be the root space decomposition, and $\Delta^+$ be a positive system.
Let $B(X, Y):= \operatorname{tr}(\operatorname{ad}_X\operatorname{ad}_Y)$ be the Killing form.
For each $\alpha\in \Delta$, let $h_\alpha\in \frakh$ be defined such that $\alpha(H) = B(h_\alpha, H)$ for any $H\in \frakh$.
Let $\langle \alpha, \beta \rangle:= B(h_\alpha, h_\beta)$  and extend it bilinearly to $\frakh^*\times \frakh^*$. 
Note, in particular, that $\langle \cdot, \cdot \rangle$ is an inner-product on the $\R$-span of $\Delta$.
For each nonzero $\alpha\in \frakh^*$ and for all $\beta\in \frakh^*$, we follow the notation of~\cite{Knapp} and define $$(\beta,\alpha):= 2\langle \beta, \alpha\rangle/\|\alpha\|^2.$$
Further, let $H_\alpha \in \frakh$ be defined such that $\beta(H_{\alpha}) = (\beta, \alpha)$.
In particular, $\alpha(H_\alpha) = 2$; for $\alpha\in \Delta$, $H_\alpha$ is the coroot of~$\alpha$.
Let $\Lambda_R$ be the root lattice, i.e., the $\mathbb{Z}$-span of $\Delta$, and $\Lambda_W$ be the weight lattice, i.e., the set of $\lambda\in \frakh^*$ with $(\lambda,\alpha)\in \mathbb{Z}$ for all $\alpha\in \Delta$.
A weight $\lambda\in \Lambda_W$ is {\it minuscule} if $(\lambda,\alpha)\in \{0,1\}$ for all $\alpha\in \Delta^+$.

\section{Lie algebra generated by degree-one vector fields}\label{sec:degree-growth}
In this section, let $G$ be an arbitrary connected, semi-simple Lie group over $\F$ (not necessarily a matrix group). Let $\Phi$ be an arbitrary finite-dimensional space of smooth (resp., holomorphic) functions on $G$ for $\F = \R$ (resp., $\F = \C$), closed under left-invariant vector fields.
For each $n\in \N$, let $\mathcal{A}_n(\Phi)$ be the space spanned by $\phi_1\cdots \phi_n$ with $\phi_i\in \Phi$, and $\mathcal{A}_0(\Phi) := \F\bfo$, where $\bfo$ is the constant function $\bfo(x) = 1$ for all $x\in G$.
Let $\mathcal{L}_n$ be the space of right-normed Lie products $[V_1,\cdots,[V_{n-1}, V_n]]$ of depth~$n$ with $V_i\in \Phi \frakg_L$. Note that by the Jacobi identity, any Lie product is a linear combination of right-normed Lie products.

The main result of the section is the following:

\begin{theorem}
\label{thm:liealgebra}
If $\Phi$ does not contain any
nonzero constant function, then for any $n \in \N$,
$$
\mathcal{L}_n=\mathcal{A}_n(\Phi)\mathfrak g_L.
$$
\end{theorem}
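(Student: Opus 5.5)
The plan is to prove the two inclusions separately, each by induction on $n$, using the basic bracket formula for left-invariant fields with function coefficients: for $\varphi,\psi$ functions and $X,Y\in\frakg$,
\begin{equation*}
[\varphi L_X,\psi L_Y]=\varphi\psi\,L_{[X,Y]}+\varphi\,(L_X\psi)\,L_Y-\psi\,(L_Y\varphi)\,L_X .
\end{equation*}
For the inclusion $\mathcal L_n\subseteq\mathcal A_n(\Phi)\frakg_L$, note first that $\mathcal A_m(\Phi)$ is closed under every $L_X$: $\Phi$ is, and $L_X$ is a derivation. The formula then shows $[\Phi\frakg_L,\mathcal A_{n-1}(\Phi)\frakg_L]\subseteq\mathcal A_n(\Phi)\frakg_L$, since each of the three terms has a coefficient that is a product of one element of $\Phi$ and one of $\mathcal A_{n-1}(\Phi)$. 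Induction on the depth of right-normed products gives the inclusion.

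For the reverse inclusion I will again induct on $n$; the case $n=1$ is the definition. Assuming $\mathcal L_{n-1}=\mathcal A_{n-1}(\Phi)\frakg_L$, we get $\mathcal L_n=[\Phi\frakg_L,\mathcal A_{n-1}(\Phi)\frakg_L]$, and we must show that the elements
\begin{equation*}
B(\varphi,X;f,Y):=\varphi f\,L_{[X,Y]}+\varphi\,(L_Xf)\,L_Y-f\,(L_Y\varphi)\,L_X,\qquad \varphi\in\Phi,\ f\in\mathcal A_{n-1}(\Phi),
\end{equation*}
span $\mathcal A_n(\Phi)\frakg_L$. Two structural facts will be used.

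\emph{Invariance.} $\mathcal L_n$ is stable under $\operatorname{ad}L_Z$ for every $Z\in\frakg$. This follows from the Jacobi identity, because both $\Phi\frakg_L$ and $\mathcal A_{n-1}(\Phi)\frakg_L$ are stable under $\operatorname{ad}L_Z$.

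\emph{No trivial component.} The hypothesis that $\Phi$ has no nonzero constants enters here. After complexifying if $\F=\R$, $\Phi$ is a finite-dimensional $\frakg$-module under $X\mapsto L_X$. By complete reducibility it has no trivial summand, since a function killed by all $L_X$ on connected $G$ is constant. Hence $\Phi=\frakg\cdot\Phi$, and in particular every $\varphi$ can be written as $\sum_i L_{Z_i}\psi_i$.

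Next fix a Cartan subalgebra $\frakh$ of (the complexification of) $\frakg$ and decompose $\Phi$ and $\mathcal A_{n-1}(\Phi)$ into $\frakh$-weight vectors. For weight vectors $\varphi$ of weight $\mu$ and $f$ of weight $\nu$, and $H,H'\in\frakh$,
\begin{equation*}
B(\varphi,H;f,H')=\nu(H)\,\varphi f\,L_{H'}-\mu(H')\,\varphi f\,L_H .
\end{equation*}
Similarly, $B(\varphi,H;f,E_\alpha)$ equals $(\alpha+\nu)(H)\,\varphi fL_{E_\alpha}$ minus the term $f(L_{E_\alpha}\varphi)L_H$.

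The aim is to extract every product $\varphi f\,L_W$ from these expressions:
\begin{enumerate}[label=(\roman*)]
\item First, whenever $\mu\neq0$ or $\nu\neq0$, choosing $H,H'$ appropriately yields $\varphi f\,L_{H''}$ for all $H''\in\frakh$.
\item Next, subtracting the correction terms (which involve $E_\alpha\varphi$ of weight $\mu+\alpha$) yields root-vector coefficients. I will order these by an induction on the height of $\mu$, starting from highest weights, where $L_{E_\alpha}\varphi=0$.
\item Finally, the invariance under $\operatorname{ad}L_Z$ moves between components, removing the need to treat every pair $(\mu,\nu)$ directly.
\end{enumerate}

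The main obstacle is the degenerate case where the relevant weight combinations vanish, for instance $\mu=\nu=0$, or $(\alpha+\nu)(H)=0$ for all admissible choices. There the formulas above give nothing directly. My first attempt would be to use the no-trivial-component fact to write a zero-weight $\varphi$ as $\sum_\alpha L_{E_\alpha}\psi_\alpha$ with $\psi_\alpha$ of weight $-\alpha\neq0$. Rewriting $\varphi f\,L_W$ via Leibniz as $\sum_\alpha\big(L_{E_\alpha}(\psi_\alpha f)-\psi_\alpha L_{E_\alpha}f\big)L_W$, the second pieces have nonzero-weight first factor and are handled by the nondegenerate cases. The first pieces I would obtain as $\operatorname{ad}L_{E_\alpha}$-images of already obtained elements, using the invariance.
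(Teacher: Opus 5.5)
\textbf{There is a genuine gap: the all-zero-weight case with a Cartan coefficient is not handled, and your sketched reduction for it is circular.} The easy inclusion, the $\operatorname{ad}L_Z$-invariance of $\mathcal{L}_n$, and the weight bookkeeping are fine.

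Take all factors of weight zero and a Cartan coefficient. Your identity $L_{E_\alpha}(\psi_\alpha f)L_H=[L_{E_\alpha},\psi_\alpha fL_H]+\alpha(H)\,\psi_\alpha fL_{E_\alpha}$ needs $\psi_\alpha fL_{E_\alpha}$. The Cartan--root brackets that produce this term with nonzero coefficient, such as $B(\psi_\alpha,H;f,E_\alpha)=\alpha(H)\,\psi_\alpha fL_{E_\alpha}-f\,(L_{E_\alpha}\psi_\alpha)L_H$, bring back a product of zero-weight factors with a Cartan coefficient.

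Here is a concrete instance. Take $\frakg=\mathfrak{sl}_2$ with standard basis $E,H,F$, $n=2$, and $\Phi\cong\frakg$ with weight vectors $v_2,v_0,v_{-2}$ and $v_0=L_Ev_{-2}$. Your Leibniz--invariance step and $B(v_{-2},H;v_0,E)$ give one and the same relation, $v_0^2L_H\equiv 2v_{-2}v_0L_E$, modulo terms already obtained from the distinct-weight case. So nothing is gained. With a single root there is also no second root to play against, which is what lets the paper's Sub-case~2.1 succeed with Cartan--root brackets alone.

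The missing idea has two parts:
\begin{itemize}
\item bracket two root-vector fields, $[\phi_1L_{X_\alpha},\xi_{-1}L_{X_{-\alpha}}]$, whose term $\phi_1\xi_{-1}L_{[X_\alpha,X_{-\alpha}]}=\xi L_{H_\alpha}$ produces the target directly;
\item check that, after eliminating the side terms, the coefficient of $\xi L_{H_\alpha}$ is nonzero.
\end{itemize}
In the example one gets $[v_0L_E,v_0L_F]\equiv -v_0^2L_H$ modulo $\mathcal{L}_2$. In general this is Proposition~\ref{prop:case3}. There the coefficient is $2-\frac14\sum_i m_i(m_i+2)$, which is nonzero because each $m_i$ is a positive even integer; positivity is exactly where the absence of constants enters. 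For distinct roots $\alpha_i\neq\alpha_j$ the elimination instead rests on $4-(\alpha_i,\alpha_j)(\alpha_j,\alpha_i)>0$. Your plan contains no nonvanishing computation of this kind, and that computation is the heart of the theorem.

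\textbf{Step~(i) is also false as stated.} The brackets $B(\varphi,H;f,H')$ give $\varphi f\,L_{H''}$ only for $H''$ in the span of the elements $\nu(H)H'-\mu(H')H$. That span is all of $\frakh$ exactly when $\mu\neq\nu$; when $\mu=\nu\neq0$ it is only $\ker\mu$. Re-splitting $\phi_1\cdots\phi_n=\phi_i\,\xi_{-i}$ rescues you unless every $\lambda_i$ equals $\frac12\sum_j\lambda_j$ (Proposition~\ref{prop:case1}), which forces $(n-2)\lambda_i=0$. So the case $n=2$, $\phi_1,\phi_2\in\Phi_\lambda$, $\lambda\neq0$, is left over.

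Your invariance idea does repair this case. For a root $\alpha$ with $(\lambda,\alpha)>0$, write $\phi_1=L_{X_\alpha}\psi$ with $\psi\in\Phi_{\lambda-\alpha}$ (Lemma~\ref{lem:sl2rep}). Then $\phi_1\phi_2L_H=[L_{X_\alpha},\psi\phi_2L_H]+\alpha(H)\,\psi\phi_2L_{X_\alpha}-\psi\,(L_{X_\alpha}\phi_2)L_H$, and each term on the right reduces to products of two factors with distinct weights (compare Proposition~\ref{prop:case2}).

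\textbf{Root-vector coefficients.} Once all Cartan components are known, these need no induction on height. The degenerate factor $(\alpha+\nu)(H)$ is avoided by also using the order $[\phi_iL_{X_\alpha},\xi_{-i}L_H]$, whose factor is $-(\alpha+\lambda_i)(H)$. Only the case $n=2$, $\lambda_1=\lambda_2=-\alpha$ then needs an extra step, as in \eqref{eq:laststep}.
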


For the case $\F = \R$, we will prove the theorem by establishing its complexified version, i.e., $\mathcal{L}^\C_n = \mathcal{A}_n(\Phi)^\C \frakg_L^\C$. 
For ease of notation, we will omit their super-index for the remainder of the section. All spaces below are thus over~$\C$ unless otherwise stated.
Also, we will use $X\in \frakg$ as a shorthand notation for $L_X$, so for any $\phi\in \Phi$ we write $X\phi$ instead of $L_X \phi$.

The proof will be carried out by induction on $n$. For the base case $n = 1$, the result holds trivially. For the inductive step, we need to show that
\begin{equation*}
[\Phi\frakg_L, \mathcal{A}_{n-1}(\Phi) \frakg_L] = \mathcal{A}_n(\Phi) \frakg_L.
\end{equation*}

Note that for any $\phi\in \Phi$, any $\zeta := \phi_1\cdots \phi_{n-1}\in \mathcal{A}_{n-1}(\Phi)$, and any $X, Y\in \frakg_L$,
\begin{equation}\label{eq:liebracket}
[\phi X, \zeta Y ] = \phi\zeta [X, Y] + (X\zeta) \phi Y - (Y\phi) \zeta X.
\end{equation}
Since  $\mathcal{A}_n(\Phi)$ is closed under $\frakg_L$ for all $n \in\N$, the right hand side of~\eqref{eq:liebracket} belongs to $\mathcal{A}_n(\Phi)\frakg_L$, so $[\Phi\frakg_L, \mathcal{A}_{n-1}(\Phi) \frakg_L] \subseteq \mathcal{A}_n(\Phi) \frakg_L$.
The main effort of the proof is in establishing the reverse inclusion:
\begin{equation}\label{eq:converseinclusion}
[\Phi\frakg_L, \mathcal{A}_{n-1}(\Phi) \frakg_L] \supseteq \mathcal{A}_n(\Phi) \frakg_L.
\end{equation}

As a consequence of the Chevalley basis, there exist elements $X_\alpha\in \frakg_\alpha$, for $\alpha\in \Delta$, such that the following commutation relations hold:
$$
\begin{aligned}
\relax
[X_\alpha, X_{-\alpha}] & = H_\alpha, \quad \mbox{for any } \alpha \in \Delta, \\
[H_\alpha, X_\beta] & = (\beta,\alpha) X_\beta,  \quad \mbox{for any } \alpha, \beta\in \Delta,  \\
[X_\alpha, X_\beta] & = c_{\alpha, \beta} X_{\alpha + \beta}, \quad \mbox{for nonproportional } \alpha \mbox{ and } \beta,
\end{aligned}
$$
where $c_{\alpha,\beta}^2 = (r + 1)^2$, with $r$ the largest integer such that $(\beta - r\alpha)$ is a root (by convention, $X_{\alpha + \beta} = 0$ if $\alpha + \beta \notin\Delta \cup \{0\}$).

Throughout this section, let $\Lambda$ be the set of weights associated with the representation $\Phi$ of~$\frakg$, and $
\Phi = \oplus_{\lambda \in\Lambda}\Phi_\lambda$
be the weight space decomposition.  
Note that $(\lambda,\alpha)\in\mathbb{Z}$ for all $\lambda\in\Lambda$ and $\alpha\in\Delta$.  
It is clear that $\mathcal{A}_n(\Phi)\frakg_L$ is spanned by elements $\xi H_{\alpha}, \xi X_{\alpha}$, where
 $\alpha\in \Delta$ and $$\xi = \prod_{i = 1}^n\phi_i, \quad \mbox{ with } \phi_i\in \Phi_{\lambda_i} \mbox{ for some } \lambda_i\in \Lambda.$$
For convenience, for each $i = 1,\ldots, n$, let
$\xi_{-i}:= \prod_{j = 1, j\neq i}^n \phi_j$. Also, let
\begin{equation}\label{eq:defsigmaanddelta}
\lambda:= \frac{1}{2}\sum_{i = 1}^{n} \lambda_i.
\end{equation}

We show below that $\xi H_\alpha, \xi X_\alpha\in [\Phi\frakg_L, \mathcal{A}_{n-1}(\Phi)\frakg_L]$. 
We start with the following proposition:

\begin{proposition}\label{prop:case1}
    If there exists some $i$ such that $\lambda_i\neq \lambda$, then $\xi H\in \mathcal{L}_n $ for any $H\in \frakh$.
\end{proposition}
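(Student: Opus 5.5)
The plan is to work inside the inductive step. We assume $\mathcal{L}_{n-1}=\mathcal{A}_{n-1}(\Phi)\frakg_L$ (so $n\ge 2$), and we produce $\xi H$ directly as a single bracket of the form $[\phi_i H_1,\ \xi_{-i}H_2]$ with $H_1,H_2\in\frakh$. Such a bracket lies in $\mathcal{L}_n$: indeed $\phi_i H_1\in\Phi\frakg_L$, and $\xi_{-i}H_2\in\mathcal{A}_{n-1}(\Phi)\frakg_L=\mathcal{L}_{n-1}$ by the induction hypothesis. Hence $[\phi_iH_1,\xi_{-i}H_2]\in[\Phi\frakg_L,\mathcal{L}_{n-1}]\subseteq\mathcal{L}_n$, since brackets of right-normed products with an extra element of $\Phi\frakg_L$ on the left are right-normed products of depth $n$.

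Next we evaluate the bracket using \eqref{eq:liebracket}. Since $[H_1,H_2]=0$, the first term vanishes. Each $\phi_j\in\Phi_{\lambda_j}$ is a weight vector, so $H\phi_j=\lambda_j(H)\phi_j$ for $H\in\frakh$. Because left-invariant fields act as derivations, this gives $H_1\xi_{-i}=\mu(H_1)\xi_{-i}$ with $\mu:=\sum_{j\neq i}\lambda_j=2\lambda-\lambda_i$. Writing $\nu:=\lambda_i$, we obtain
$$
[\phi_iH_1,\ \xi_{-i}H_2]=\xi\bigl(\mu(H_1)H_2-\nu(H_2)H_1\bigr)\in\mathcal{L}_n .
$$

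Now we specialize to $H_1=H_2=H$, which gives $(\mu-\nu)(H)\,\xi H\in\mathcal{L}_n$ for every $H\in\frakh$. The hypothesis $\lambda_i\neq\lambda$ is exactly $\mu-\nu=2(\lambda-\lambda_i)\neq 0$, so $\ker(\mu-\nu)$ is a proper hyperplane of $\frakh$. For every $H$ off this hyperplane we may divide by $(\mu-\nu)(H)$ and conclude $\xi H\in\mathcal{L}_n$. The complement of a proper hyperplane spans $\frakh$, and $H\mapsto\xi H$ is linear, so $\xi H\in\mathcal{L}_n$ for all $H\in\frakh$.

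The only point needing care is the bookkeeping of the complexification: the weight decomposition and the identity $H\phi=\lambda(H)\phi$ are used over $\C$, consistent with the convention of this section. I expect no serious obstacle beyond confirming that the induction hypothesis is available at depth $n-1$, which it is since the proposition is invoked only in the inductive step.
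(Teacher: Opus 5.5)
Your proposal is correct and matches the paper's proof. Both compute $[\phi_i H,\xi_{-i}H]=2(\lambda-\lambda_i)(H)\,\xi H$, then handle $H$ in the kernel of $\delta_i:=2(\lambda-\lambda_i)$ by writing it as a difference of two elements off that hyperplane; the paper does this explicitly via $H=(H+H_{\delta_i})-H_{\delta_i}$.
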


\begin{proof}
Let $\delta_i:= 2(\lambda - \lambda_i) \neq 0$.
It directly follows from computation that
$$
[\phi_i H, \xi_{-i} H]  = (H \xi_{-i}) \phi_i H - (H\phi_i) \xi_{-i} H
= \delta_i(H) \xi H.
$$
If $\delta_i(H)\neq 0$, then $\xi H\in \mathcal{L}_n$.
Otherwise, let $H':= H + H_{\delta_i}$. Since $\delta_i(H') = \delta_i(H_{\delta_i}) = 2$, both $\xi H'$ and $\xi H_{\delta_i}$ belong to $\mathcal{L}_n$ and hence,
$\xi H = \xi H' -  \xi H_{\delta_i} \in \mathcal{L}_n$.
\end{proof}

We now assume that $\lambda_i = \lambda$ for all $i = 1,\ldots, n$ and show that $\xi H\in \mathcal{L}_n$. It follows from~\eqref{eq:defsigmaanddelta} that $(n - 2)\lambda = 0$.
Consider the following two cases:
\begin{description}
    \item[\it Case 1:] $n = 2$ and $\lambda \neq 0$;
    \item[\it Case 2:] $\lambda = 0$.
\end{description}

We first deal with Case~1. For any given $\alpha\in \Delta$, let $\mathfrak{s}_\alpha$ be the Lie subalgebra of $\frakg$ spanned by $\{H_\alpha, X_\alpha, X_{-\alpha}\}$, which is isomorphic to $\mathfrak{sl}_2(\C)$. We need the following lemma:

\begin{lemma}\label{lem:sl2rep}
For any $\phi \in \Phi_\lambda$ and for any $\alpha\in \Delta$ with $(\lambda, \alpha) \geq 0$, there exists a decomposition  $\phi = \phi' + \phi''$, with $\phi',\phi''\in \Phi_\lambda$, such that the following hold
\begin{enumerate}
\item There exists an element $\psi\in \Phi_{\lambda - \alpha}$ such that $X_\alpha  \psi = \phi'$.
\item  $\mathfrak{s}_\alpha \phi'' = \{0\}$.
\end{enumerate}
\end{lemma}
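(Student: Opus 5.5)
Work inside the $\mathfrak{s}_\alpha$-submodule of $\Phi$ generated by the $\alpha$-string through $\lambda$, split it by the Casimir of $\mathfrak{s}_\alpha$ into $\frakh$-stable isotypic pieces, and then apply standard $\mathfrak{sl}_2$ theory on each piece.

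\textbf{Step 1 (the string module).} Let
$$M:=\bigoplus_{j\in\mathbb{Z}}\Phi_{\lambda+j\alpha}.$$
Since $X_{\pm\alpha}\Phi_\mu\subseteq\Phi_{\mu\pm\alpha}$ and $H_\alpha$ acts on $\Phi_\mu$ by the scalar $(\mu,\alpha)$, the space $M$ is a finite-dimensional $\mathfrak{s}_\alpha$-module. Set $k:=(\lambda,\alpha)\ge 0$. On $\Phi_{\lambda+j\alpha}$, $H_\alpha$ acts by $k+2j$, and these values are distinct for distinct $j$. Hence the $H_\alpha$-eigenspace of $M$ with eigenvalue $k+2j$ is exactly $\Phi_{\lambda+j\alpha}$. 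In particular,
$$M[k]=\Phi_\lambda,\qquad M[k-2]=\Phi_{\lambda-\alpha}.$$

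\textbf{Step 2 (isotypic decomposition).} Let
$$C:=\tfrac12H_\alpha^2+H_\alpha+2X_{-\alpha}X_\alpha$$
be the Casimir element of $\mathfrak{s}_\alpha$. It acts on the irreducible module of highest weight $m$ by $\tfrac12m(m+2)$, and these scalars are distinct for distinct $m\ge 0$. By complete reducibility, $M=\bigoplus_{m\ge0}M_m$, where $M_m$ is the $C$-eigenspace for $\tfrac12m(m+2)$, i.e.\ the isotypic component of type $V(m)$.

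Because $C$ commutes with $H_\alpha$, each $M_m$ is stable under $H_\alpha$, so eigenspaces split accordingly:
$$\Phi_\lambda=\bigoplus_m M_m[k],\qquad M_m[k-2]\subseteq\Phi_{\lambda-\alpha}.$$
Write $\phi=\sum_m\phi_m$ with $\phi_m\in M_m[k]$. Define $\phi'':=\phi_0$ and $\phi':=\sum_{m\ge1}\phi_m$; both lie in $\Phi_\lambda$.

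\textbf{Step 3 (verifying the two properties).} Since $M_0$ is a sum of trivial $\mathfrak{s}_\alpha$-modules, $\mathfrak{s}_\alpha\phi''=\{0\}$, which gives item~2.

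For item~1, fix $m\ge1$ with $M_m[k]\neq0$. Then $0\le k\le m$ and $k\equiv m \pmod 2$. The key $\mathfrak{sl}_2$ fact is that, in $V(m)$, the map $X_\alpha\colon V(m)[\mu-2]\to V(m)[\mu]$ is an isomorphism whenever both weights lie in $\{-m,-m+2,\dots,m\}$. Because $M_m$ is a direct sum of copies of $V(m)$, the same holds on $M_m$.

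It remains to check that $k-2\ge -m$:
\begin{itemize}
\item if $m\ge2$, this holds since $k\ge0$;
\item if $m=1$, parity forces $k=1$, and then $k-2=-1=-m$.
\end{itemize}
So there is $\psi_m\in M_m[k-2]\subseteq\Phi_{\lambda-\alpha}$ with $X_\alpha\psi_m=\phi_m$. Then $\psi:=\sum_{m\ge1}\psi_m\in\Phi_{\lambda-\alpha}$ satisfies $X_\alpha\psi=\phi'$, which is item~1.

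\textbf{Main obstacle.} The delicate point is ensuring that the preimage $\psi$ is a genuine $\frakh$-weight vector of weight $\lambda-\alpha$, and not merely an $H_\alpha$-eigenvector. This is what restricting to the $\alpha$-string $M$ achieves: there, $H_\alpha$-eigenspaces coincide with $\frakh$-weight spaces, and the Casimir splitting is compatible with them. The hypothesis $(\lambda,\alpha)\ge0$ is used exactly to guarantee $k-2\ge-m$ for every nontrivial isotypic type, so that weight $k$ always lies in the image of $X_\alpha$.
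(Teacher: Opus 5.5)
Your proof is correct and follows essentially the paper's route: decompose under $\mathfrak{s}_\alpha$, take $\phi''$ to be the trivial-type component, and invert $X_\alpha$ on the nontrivial components. The paper uses an irreducible decomposition of the cyclic module $\mathcal{U}(\mathfrak{s}_\alpha)\phi$ and writes the explicit preimage $\psi=\sum_{\ell:m_\ell>0}\frac{4}{(m_\ell-k+2)(m_\ell+k)}X_{-\alpha}\phi_\ell$; you instead use the canonical Casimir isotypic splitting of the whole $\alpha$-string together with surjectivity of $X_\alpha$ between adjacent weights, which also makes it explicit why $\psi$ is an $\frakh$-weight vector of weight $\lambda-\alpha$.
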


\begin{proof}
Let $\Phi'$ be the smallest subspace of $\Phi$ that contains $\phi$ and is closed under $\mathfrak{s}_\alpha$, so $\Phi'$ is a representation of $\mathfrak{s}_\alpha$.
Decompose $\Phi'= \oplus_{\ell}\Phi'_{\ell}$ such that each $ \Phi'_{\ell}$ is irreducible.
Correspondingly, we write $\phi = \sum_{\ell} \phi_{\ell}$ where  $\phi_{\ell}\in \Phi'_{\ell}$.
Note that each $\phi_\ell$  necessarily belongs to $\Phi_\lambda$.
Let  $m_\ell\in \N_0$ be the highest weight of~$\Phi'_{\ell}$, and $k:=(\lambda, \alpha) \geq 0$.
Then, $m_\ell \geq k$ and, moreover, $m_\ell$ and $k$ share the same parity.
Let
$\phi':= \sum_{\ell: m_\ell > 0} \phi_\ell$ and
$\phi'':= \sum_{\ell: m_\ell = 0} \phi_\ell$.
By construction, $\mathfrak{s}_\alpha \phi'' = 0$.
For $\phi'$, note that $${X_\alpha}{X_{-\alpha}}\phi_{\ell} = \frac{1}{4}( m_\ell - k + 2)(m_\ell + k)\phi_{\ell}.$$
Thus, by setting $$\psi:= \sum_{\ell:m_\ell > 0} \frac{4}{(m_\ell - k + 2)(m_\ell + k)} {X_{-\alpha}} \phi_{\ell},$$
we obtain ${X_\alpha}\psi = \phi'$.
\end{proof}

With the lemma above, we now establish the following proposition:

\begin{proposition}\label{prop:case2}
    Let $\phi_1, \phi_2\in \Phi_\lambda$, with $\lambda\neq 0$.
    Then, $\phi_1\phi_2 H_\alpha \in \mathcal{L}_2$ for any $\alpha\in \Delta$.
\end{proposition}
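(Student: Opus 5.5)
The plan is to reduce, by linearity, to a few special products using Lemma~\ref{lem:sl2rep}, and to handle each one with the bracket formula~\eqref{eq:liebracket} together with Proposition~\ref{prop:case1} in the case $n=2$. Since $H_{-\alpha}=-H_\alpha$, I may replace $\alpha$ by $-\alpha$ if necessary and assume $k:=(\lambda,\alpha)\geq 0$. Apply Lemma~\ref{lem:sl2rep} to $\phi_1$, writing $\phi_1=\phi_1'+\phi_1''$ with $\phi_1'=X_\alpha\psi$, $\psi\in\Phi_{\lambda-\alpha}$, and $\mathfrak{s}_\alpha\phi_1''=0$. Do the same for $\phi_2$. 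It then suffices to show three things: $\phi_1'\phi_2H_\alpha\in\mathcal{L}_2$ for arbitrary $\phi_2\in\Phi_\lambda$; the symmetric statement $\phi_1''\phi_2'H_\alpha\in\mathcal{L}_2$; and $\phi_1''\phi_2''H_\alpha\in\mathcal{L}_2$.

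\emph{The term $\phi_1'\phi_2H_\alpha$.} Formula~\eqref{eq:liebracket} gives
$$[\phi_2X_\alpha,\psi H_\alpha]=-2\phi_2\psi X_\alpha+\phi_1'\phi_2H_\alpha-k\,\phi_2\psi X_\alpha .$$
So it is enough to show $\psi\phi_2X_\alpha\in\mathcal{L}_2$, because the coefficient $k+2$ is nonzero. For an arbitrary $H\in\frakh$, formula~\eqref{eq:liebracket} also gives
$$[\psi X_\alpha,\phi_2H]=-\lambda(H)\,\psi\phi_2X_\alpha+(X_\alpha\phi_2)\psi H .$$
The product $(X_\alpha\phi_2)\psi$ has factor weights $\lambda+\alpha\neq\lambda-\alpha$. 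Hence the two weights differ from their average, and Proposition~\ref{prop:case1} (with $n=2$) gives $(X_\alpha\phi_2)\psi H\in\mathcal{L}_2$. Since $\lambda\neq0$, I can choose $H$ with $\lambda(H)\neq0$, which yields $\psi\phi_2X_\alpha\in\mathcal{L}_2$. If $X_\alpha\phi_2=0$, that term simply vanishes and the conclusion is unchanged. By the same argument with $\phi_1$ and $\phi_2$ interchanged, $\phi_1''\phi_2'H_\alpha\in\mathcal{L}_2$.

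\emph{The term $\phi_1''\phi_2''H_\alpha$.} Here both factors are annihilated by $X_{\pm\alpha}$, so
$$[\phi_1''X_\alpha,\phi_2''X_{-\alpha}]=\phi_1''\phi_2''H_\alpha+(X_\alpha\phi_2'')\phi_1''X_{-\alpha}-(X_{-\alpha}\phi_1'')\phi_2''X_\alpha=\phi_1''\phi_2''H_\alpha,$$
and this term lies in $\mathcal{L}_2$ directly.

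Summing the three cases gives $\phi_1\phi_2H_\alpha=\phi_1'\phi_2H_\alpha+\phi_1''\phi_2'H_\alpha+\phi_1''\phi_2''H_\alpha\in\mathcal{L}_2$. The main obstacle is the first case. There the naive brackets (for example $[\psi H_\alpha,\phi_2X_\alpha]$) return the same combination of $\psi\phi_2X_\alpha$ and $\phi_1'\phi_2H_\alpha$, so nothing is gained from them. The key step is to find the bracket $[\psi X_\alpha,\phi_2H]$, where the leftover term has factors of distinct weights and is therefore covered by Proposition~\ref{prop:case1}. This is the step where the hypothesis $\lambda\neq0$ is used.
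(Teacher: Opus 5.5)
Your proof is correct and follows the paper's argument. It uses the same brackets (up to sign and the choice of $H$) $[\psi X_\alpha,\phi_2 H]$, $[\psi H_\alpha,\phi_2 X_\alpha]$ and $[\phi_1''X_\alpha,\phi_2''X_{-\alpha}]$, together with Lemma~\ref{lem:sl2rep} and Proposition~\ref{prop:case1}. The one difference is a streamlining: taking any $H$ with $\lambda(H)\neq 0$ and an arbitrary second factor $\phi_2$ merges the paper's two sub-cases (it uses $H=H_\alpha$ when $(\lambda,\alpha)\neq 0$ and $H=H_\alpha+H_\lambda$ when $(\lambda,\alpha)=0$) and its separate treatment of $\phi_1'\phi_2'$ and $\phi_1'\phi_2''$.
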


\begin{proof}
We consider the following two sub-cases:

\xc{Sub-case 1.1: $(\lambda, \alpha) \neq 0$.} Without loss of generality, we assume that $(\lambda, \alpha) > 0$. Then, $\mu:=\lambda - \alpha$ is a weight.
We appeal to Lemma~\ref{lem:sl2rep} to obtain the decomposition $\phi_1 = \phi_1' + \phi_1''$. Note that $H_\alpha \phi_1'' = (\lambda, \alpha) \phi_1''$.
Since $(\lambda, \alpha)\neq 0$, we must have that $\phi_1'' = 0$ and hence, $\phi_1' = \phi_1$.
Let $\psi_1\in \Phi_\mu$ be such that $X_\alpha \psi_1 = \phi_1$. Then, by~\eqref{eq:liebracket},
\begin{equation}\label{eq:case2-1}
\begin{aligned}\relax
[\phi_2 H_\alpha, \psi_1  X_\alpha] & = \psi_1\phi_2[H_\alpha, X_\alpha] + (\mu,\alpha)\psi_1 \phi_2 X_\alpha - \psi_1 (X_\alpha \phi_2) H_\alpha \\
& = (\lambda, \alpha) \psi_1 \phi_2 X_\alpha - \psi_1 (X_\alpha \phi_2) H_\alpha \in \mathcal{L}_2.
\end{aligned}
\end{equation}
Note that $X_\alpha \phi_2\in \Phi_\nu$, where $\nu:= \lambda + \alpha$.
Since $\nu - \mu = 2\alpha \neq 0$, it follows from Proposition~\ref{prop:case1} that $\psi_1 \psi_2 H_\alpha\in \mathcal{L}_2$ for any $\psi_2\in \Phi_\nu$. Using~\eqref{eq:case2-1} and the fact that $(\lambda, \alpha)\neq 0$, we have that $\psi_1 \phi_2 X_\alpha \in \mathcal{L}_2$. Using~\eqref{eq:liebracket} again, we obtain that
\begin{equation*}
\begin{aligned}\relax
[\psi_1 H_\alpha, \phi_2  X_\alpha] & = \psi_1\phi_2[H_\alpha, X_\alpha] + (\lambda, \alpha) \psi_1\phi_2 X_\alpha - (X_\alpha \psi_1) \phi_2 H_\alpha \\
& = (\nu, \alpha) \psi_1\phi_2 X_\alpha - \phi_1\phi_2 H_\alpha \in \mathcal{L}_2.
\end{aligned}
\end{equation*}
Since $\psi_1\phi_2 X_\alpha\in \mathcal{L}_2$, we have that $\phi_1\phi_2 H_\alpha\in \mathcal{L}_2$.

\xc{Sub-case 1.2: $(\lambda, \alpha) = 0$.}
We use Lemma~\ref{lem:sl2rep} to decompose $\phi_i = \phi'_i + \phi''_i$, for $i= 1,2$. In case $\phi'_i$ is nonzero, we let $\psi_i\in \Phi_{\lambda- \alpha}$ be such that $X_\alpha\psi_i = \phi'_i$.
We then write
\begin{equation}\label{eq:case2-3}
\phi_1 \phi_2 H_\alpha = \phi'_1\phi'_2 H_\alpha + \phi'_1\phi''_2 H_\alpha + \phi''_1 \phi'_2 H_\alpha + \phi''_1\phi''_2 H_\alpha.
\end{equation}
We show below that each term on the right hand side of~\eqref{eq:case2-3} belongs to $\mathcal{L}_2$.
For the first term $\phi'_1 \phi'_2 H_\alpha$, we let $H':= H_\alpha + H_\lambda$ and note that $\lambda(H')- \alpha(H') = 0$.
Then,
$$
\begin{aligned}\relax
[\phi'_2 H', \psi_1 X_\alpha ] & = \psi_1 \phi'_2 \alpha(H') X_\alpha + (\lambda - \alpha)(H') \phi'_2\psi_1 X_\alpha - \psi_1 (X_\alpha \phi'_2) H' \\
& = 2 \psi_1 \phi'_2 X_\alpha - \psi_1 (X_\alpha \phi'_2) H' \in \mathcal{L}_2, \\
[\psi_1 H_\alpha, \phi'_2 X_\alpha] & = 2\psi_1 \phi'_2 X_\alpha - \phi'_1\phi'_2 H_\alpha \in \mathcal{L}_2.
\end{aligned}
$$
Using the same arguments as in Sub-case~1.1, $\psi_1 (X_\alpha \phi'_2) H'\in \mathcal{L}_2$ which implies that $\psi_1\phi'_2 X_\alpha\in \mathcal{L}_2$ and hence, $\phi'_1\phi'_2 H_\alpha\in \mathcal{L}_2$.
For the second term $\phi'_1\phi''_2 H_\alpha$ of~\eqref{eq:case2-3}, we similarly have that
$$
\begin{aligned}\relax
[\phi''_2 H', \psi_1 X_\alpha ] & = 2\psi_1 \phi''_2 X_\alpha \in \mathcal{L}_2,  \\
[\psi_1 H_\alpha, \phi''_2 X_\alpha] & = 2\psi_1 \phi''_2 X_\alpha - \phi'_1\phi''_2 H_\alpha \in \mathcal{L}_2,
\end{aligned}
$$
so $\phi'_1\phi''_2 H_\alpha\in \mathcal{L}_2$. By the same arguments (and by symmetry), the third term $\phi''_1\phi'_2 H_\alpha$ also belongs to $\mathcal{L}_2$.
Finally, for the last term $\phi''_1\phi''_2 H_\alpha$, we have that
$$
[\phi''_1 X_\alpha, \phi''_2 X_{-\alpha}] = \phi''_1\phi''_2 H_\alpha \in \mathcal{L}_2.
$$
This completes the proof.
\end{proof}

We now deal with Case 2, i.e., $\lambda_i = 0$ for all $i = 1,\ldots, n$.
By the hypothesis of Theorem~\ref{thm:liealgebra}, $\Phi$ does not contain any nonzero constant function, so $\Phi$ does not contain a trivial representation of $\frakg$.
It follows from the highest weight theorem that the weight space $\Phi_0$ can be expressed as
$\Phi_0 = \sum_{\alpha\in \Delta^+} X_{\alpha}\Phi_{-\alpha}$.
By multi-linearity, we can assume, without loss of generality,
that $\phi_1,\ldots, \phi_n$ satisfy the condition that there exist $\alpha_i\in \Delta^+$ and $\psi_i \in \Phi_{-\alpha_i}$, for $i = 1,\ldots, n$, such that
\begin{equation}\label{eq:xalphapsi}
X_{\alpha_i}\psi_i  = \phi_i.
\end{equation}
Let $\Phi_i$ be the smallest subspace of $\Phi$ that contains $\psi_i$ and is closed under $\mathfrak{s}_{\alpha_i}$.
Decompose $\Phi_i = \oplus_\ell\Phi_{i,\ell}$, where each $\Phi_{i,\ell}$ is irreducible under $\mathfrak{s}_{\alpha_i}$.
Let $m_{i,\ell}$ be the highest weight of $\Phi_{i,\ell}$.
Since $0$ is a weight of $\Phi_{i,\ell}$, $m_{i,\ell}$ is an even integer. Moreover, it follows from~\eqref{eq:xalphapsi} that $m_{i,\ell}$ must be strictly positive because otherwise no such $\psi_i$ would exist for the relation to hold.
For ease of presentation but without loss of generality, we can assume that the representation $\Phi_i$  is itself irreducible.

We now establish the following result:

\begin{proposition}\label{prop:case3}
    Suppose that  each $\phi_i$ is a zero-weight vector of the irreducible representation~$\Phi_i$ of~$\mathfrak{s}_{\alpha_i}$ with highest weight $m_i > 0$; then, $\xi H\in \mathcal{L}_n$ for all $H\in \mathfrak{h}$.
\end{proposition}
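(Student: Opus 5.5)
The plan is to reduce everything to Proposition~\ref{prop:case1} by using $\psi_1$ as a ``lowering'' partner of $\phi_1$. I will not need the irreducibility or $m_i>0$ beyond the relation \eqref{eq:xalphapsi} for $i=1$. We may assume $n\ge 2$, since $n=1$ is trivial. Write $\alpha:=\alpha_1$ and $\zeta:=\xi_{-1}=\phi_2\cdots\phi_n$. Then $\zeta\in\mathcal{A}_{n-1}(\Phi)$ has weight $0$, so $H\zeta=0$ for every $H\in\frakh$, and $\phi_1=X_\alpha\psi_1$ with $\psi_1\in\Phi_{-\alpha}$.

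\textbf{Step 1: reduce $\xi H$ to $\psi_1\zeta X_\alpha$.} By \eqref{eq:liebracket}, using $H\zeta=0$ and $X_\alpha\psi_1=\phi_1$,
\begin{equation*}
[\psi_1 H, \zeta X_\alpha] = \alpha(H)\,\psi_1\zeta X_\alpha - \xi H .
\end{equation*}
Hence, for any $H$, $\xi H\in\mathcal{L}_n$ as soon as $\psi_1\zeta X_\alpha\in\mathcal{L}_n$.

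\textbf{Step 2: show $\psi_1\zeta X_\alpha\in\mathcal{L}_n$.} Take $H=H_\alpha$ and apply \eqref{eq:liebracket} again:
\begin{equation*}
[\psi_1 X_\alpha, \zeta H_\alpha] = 2\,\psi_1\zeta X_\alpha + \psi_1 (X_\alpha\zeta) H_\alpha + 2\,\psi_1\zeta X_\alpha
= 4\,\psi_1\zeta X_\alpha + \psi_1(X_\alpha\zeta)H_\alpha .
\end{equation*}
By the Leibniz rule, $X_\alpha\zeta=\sum_{j\ge 2}(X_\alpha\phi_j)\prod_{k\ne 1,j}\phi_k$. Each summand $\psi_1(X_\alpha\phi_j)\prod_{k\neq 1,j}\phi_k$ is a product of $n$ weight vectors with weights $-\alpha$, $\alpha$, and zeros. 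The half-sum of these weights is $0$, and the factor $\psi_1$ has weight $-\alpha\ne 0$. So the hypothesis of Proposition~\ref{prop:case1} holds, and $\psi_1(X_\alpha\zeta)H_\alpha\in\mathcal{L}_n$. The displayed bracket lies in $\mathcal{L}_n$, so $\psi_1\zeta X_\alpha\in\mathcal{L}_n$.

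\textbf{Step 3: conclude.} Combining Steps 1 and 2 gives $\xi H\in\mathcal{L}_n$ for every $H\in\frakh$; the coefficient $\alpha(H)$ in Step 1 plays no role, since the reduction holds regardless of its value. The only point needing care is Step 2, where the Leibniz expansion must consist of products of genuine weight vectors, so that Proposition~\ref{prop:case1} applies termwise. This holds because $X_\alpha$ maps $\Phi_0$ into $\Phi_\alpha$.
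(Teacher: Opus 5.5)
There is a genuine gap in Step~2, where the bracket is miscomputed. By \eqref{eq:liebracket}, the first term of $[\psi_1 X_\alpha,\zeta H_\alpha]$ is $\psi_1\zeta\,[X_\alpha,H_\alpha]=-2\,\psi_1\zeta X_\alpha$ (since $[H_\alpha,X_\alpha]=2X_\alpha$), not $+2\,\psi_1\zeta X_\alpha$. It therefore cancels exactly against $-(H_\alpha\psi_1)\zeta X_\alpha=2\,\psi_1\zeta X_\alpha$. In fact, for every $H\in\frakh$,
\[
[\psi_1 X_\alpha,\zeta H]=-\alpha(H)\,\psi_1\zeta X_\alpha+\psi_1(X_\alpha\zeta)H+\alpha(H)\,\psi_1\zeta X_\alpha=\psi_1(X_\alpha\zeta)H .
\]
This only restates what Proposition~\ref{prop:case1} already gives, and says nothing about $\psi_1\zeta X_\alpha$.

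The cancellation is structural. The coefficient of $fgX_\alpha$ in $[fH,gX_\alpha]$ is $(\alpha+\mathrm{wt}(g))(H)$, the $\frakh$-weight of $gX_\alpha$. It vanishes whenever $g$ contains the factor $\psi_1$ of weight $-\alpha$. The only other option, $f=\psi_1$ and $g=\zeta$, is your Step~1 relation, which is circular. So what you have actually proved is $\xi H\in\mathcal{L}_n$ for $H\in\ker\alpha_1$, which is exactly the final step of the paper's proof. The core claim $\xi H_{\alpha_1}\in\mathcal{L}_n$ is left open.

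To close it you need a second relation between $\xi H_\alpha$ and $\psi_1\zeta X_\alpha$ modulo $\mathcal{L}_n$, and that requires brackets involving $X_{-\alpha}$ or a second root. The paper splits into two sub-cases.

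\emph{Some $\alpha_i\neq\alpha_j$.} A linear combination of the four brackets $[\psi_a H_{\alpha_b},\xi_{-a}X_{\alpha_a}]$, $a,b\in\{i,j\}$, yields $\bigl(4-(\alpha_i,\alpha_j)(\alpha_j,\alpha_i)\bigr)\xi H_{\alpha_i}$, and this coefficient is positive.

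\emph{All $\alpha_i=\alpha$.} The paper uses $[\phi_1X_\alpha,\xi_{-1}X_{-\alpha}]$ together with $X_{-\alpha}\phi_i=\tfrac14m_i(m_i+2)\psi_i$ and $X_\alpha\phi_i=\tfrac14m_i(m_i+2)\theta_i$, where $X_{-\alpha}\theta_i=\phi_i$. This gives $\bigl(2-\tfrac14\sum_i m_i(m_i+2)\bigr)\xi H_\alpha\in\mathcal{L}_n$. The coefficient is nonzero because each $m_i$ is a positive even integer and $n\ge 2$.

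This is precisely where the $\mathfrak{s}_{\alpha_i}$-irreducibility and $m_i>0$ are used. Your remark that these hypotheses are unnecessary should have been a warning sign.
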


\begin{proof}
Let $\theta_i,\psi_i\in \Phi_{i}$ be such that $X_{-\alpha_i} \theta_i = X_{\alpha_i}\psi_i = \phi_i$.
We first show that $\xi H_{\alpha_i} \in \mathcal{L}_n$ for all $i = 1,\ldots, n$.
We consider two sub-cases:

\xc{Sub-case 2.1: There exist $i,j\in \{1,\ldots, n\}$ such that $\alpha_i \neq \alpha_j$.}  By computation,
\begin{equation*}
\begin{aligned}\relax
[\psi_i H_{\alpha_i}, \xi_{-i} X_{\alpha_i}] & = 2\psi_i \xi_{-i} X_{\alpha_i} - \xi H_{\alpha_i}, \\
[\psi_j H_{\alpha_i}, \xi_{-j} X_{\alpha_j}] & = (\alpha_j, \alpha_i) \psi_j\xi_{-j} X_{\alpha_j} - \xi H_{\alpha_i}, \\
[\psi_i H_{\alpha_j}, \xi_{-i} X_{\alpha_i}] & = (\alpha_i,\alpha_j) \psi_i \xi_{-i} X_{\alpha_i} - \xi H_{\alpha_j}, \\
[\psi_j H_{\alpha_j}, \xi_{-j} X_{\alpha_j}] & = 2\psi_j\xi_{-j} X_{\alpha_j} - \xi H_{\alpha_j}.
\end{aligned}
\end{equation*}
We then obtain that
\begin{multline*}
(\alpha_i, \alpha_j)(\alpha_j, \alpha_i) [\psi_i H_{\alpha_i}, \xi_{-i} X_{\alpha_i}] - 4 [\psi_j H_{\alpha_i}, \xi_{-j} X_{\alpha_j}] - 2(\alpha_j,\alpha_i) [\psi_i H_{\alpha_j}, \xi_{-i} X_{\alpha_i}] \\ + 2(\alpha_j,\alpha_i)[\psi_j H_{\alpha_j}, \xi_{-j} X_{\alpha_j}] = (4 - (\alpha_i, \alpha_j)(\alpha_j, \alpha_i)) \xi H_{\alpha_i} \in \mathcal{L}_n.
\end{multline*}
Since $\alpha_i$ and $\alpha_j$ are distinct positive roots, $4 - (\alpha_i, \alpha_j)(\alpha_j, \alpha_i)  > 0$ and hence, $\xi H_{\alpha_i}\in \mathcal{L}_n$.

\xc{Sub-case 2.2: All the $\alpha_i$'s are the same.} Let $\alpha:= \alpha_i$ be the common root.
Note that
 $$X_\alpha \phi_i = \frac{1}{4}m_i (m_i + 2) \theta_i \quad \mbox{and} \quad
 X_{-\alpha} \phi_i = \frac{1}{4}m_i (m_i + 2)  \psi_i.$$
 Then,
\begin{equation*}
\begin{aligned}\relax
[\phi_1 X_\alpha, \xi_{-1} X_{-\alpha}]  & = \xi H_\alpha + \frac{1}{4}\sum_{i = 2}^n m_i(m_i + 2) \theta_i \xi_{-i} X_{-\alpha} - \frac{1}{4} m_1(m_1 + 2) \psi_1 \xi_{-1} X_\alpha, \\
[\theta_i H_{\alpha}, \xi_{-i} X_{-\alpha}] & = -2\theta_i \xi_{-i} X_{-\alpha} - \xi H_{\alpha},\\
[\psi_i H_{\alpha}, \xi_{-i} X_{\alpha}] & = 2\psi_i \xi_{-i} X_{\alpha} - \xi H_{\alpha}.
\end{aligned}
\end{equation*}
It follows that
\begin{multline*}
2 [\phi_1 X_\alpha, \xi_{-1} X_{-\alpha}]
+ \frac{1}{4}\sum_{i = 2}^n m_i(m_i + 2) [\theta_i H_\alpha, \xi_{-i} X_{-\alpha}]
 + \frac{1}{4}m_1(m_1 + 2) [\psi_1 H_\alpha, \xi_{-1} X_\alpha] \\
 = \left (2 -  \frac{1}{4}\sum_{i = 1}^n m_i(m_i + 2)\right )\xi H_\alpha \in \mathcal{L}_n.
\end{multline*}
Since $n \geq 2$ and since each $m_i$ is a positive, even integer,  $\frac{1}{4}\sum_{i = 1}^n m_i(m_i + 2) \geq 2n > 2$ and hence, $\xi H_\alpha\in \mathcal{L}_n$.

Now, let $H$ be an arbitrary element of $\frakh$. Decompose $H = H_1 + H_2$, where $H_1:= \frac{1}{2}\alpha_1(H) H_{\alpha_1}$ and $H_2  := H - \frac{1}{2}\alpha_1(H) H_{\alpha_1}$.
It suffices to show that $\xi H_2\in \mathcal{L}_n$.
Note that $\alpha_1(H_2) = 0$ and $H_2 \xi_{-1} = 0$.
Thus,
$$
[\psi_1 H_2,\xi_{-1}X_{\alpha_1}]
=  \psi_1\xi_{-1}\alpha_1(H_2) X_{\alpha_1}  + (H_2\xi_{-1})\psi_1 X_{\alpha_1} - (X_{\alpha_1}\psi_1)\xi_{-1}H_2 = -\xi H_2,
$$
which completes the proof.
\end{proof}

We have thus shown that $\xi H\in \mathcal{L}_n$ for all $H\in \frakh$.
To complete the proof of Theorem~\ref{thm:liealgebra}, it remains to establish the following result:

\begin{proposition}
    For any $\alpha\in \Delta$, $\xi X_\alpha\in \mathcal{L}_n$.
\end{proposition}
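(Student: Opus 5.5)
Since we now know $\xi H\in\mathcal{L}_n$ for every $H\in\frakh$ and every product $\xi=\phi_1\cdots\phi_n$ of weight vectors, the plan is to extract $\xi X_\alpha$ from a bracket of the form $[\phi_i H,\ \xi_{-i}X_\alpha]$ and discard the $\frakh$-valued error term. By~\eqref{eq:liebracket}, for any $H\in\frakh$ and any $i$,
\[
[\phi_i H,\ \xi_{-i}X_\alpha] = \phi_i\xi_{-i}\,\alpha(H)X_\alpha + (H\xi_{-i})\phi_i X_\alpha - (X_\alpha\phi_i)\xi_{-i}H
= \bigl(\alpha(H) + (2\lambda-\lambda_i)(H)\bigr)\xi X_\alpha - (X_\alpha\phi_i)\xi_{-i}H .
\]
The last term equals $\xi' H$ with $\xi' := (X_\alpha\phi_i)\xi_{-i}$. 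If $X_\alpha\phi_i\neq 0$, it lies in $\Phi_{\lambda_i+\alpha}$, so $\xi'$ is again (a linear combination of) a product of $n$ weight vectors. Hence $\xi' H\in\mathcal{L}_n$ by the results established above, namely Propositions~\ref{prop:case1}, \ref{prop:case2}, \ref{prop:case3} together with the reduction preceding Proposition~\ref{prop:case3}. It follows that $\bigl(\alpha+2\lambda-\lambda_i\bigr)(H)\,\xi X_\alpha\in\mathcal{L}_n$ for every $H\in\frakh$.

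Therefore the argument is finished as soon as the linear functional $\gamma_i := \alpha + 2\lambda - \lambda_i = \alpha+\sum_{j\neq i}\lambda_j$ is nonzero for some $i$: we then pick $H$ with $\gamma_i(H)\neq0$, for instance $H=H_{\gamma_i}$. The main obstacle is the degenerate situation in which $\sum_{j\ne i}\lambda_j=-\alpha$ for \emph{every} $i$. Summing over $i$ gives $(n-1)\cdot 2\lambda = -n\alpha$, and subtracting the relations for different $i$ shows that all $\lambda_i$ coincide, with common value $\mu := \lambda_i$ satisfying $(n-1)\mu=-\alpha$. Since $(\mu,\alpha)\in\mathbb{Z}$ and $(\alpha,\alpha)=2$, we get $(n-1)\,|\,2$. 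So either $n=2$ and $\lambda_1=\lambda_2=-\alpha$, or $n=3$ and $\lambda_i=-\alpha/2$, which is impossible because $\alpha/2$ is not a weight in the root-string sense: $(-\alpha/2,\alpha)=-1$ is allowed, but then we use the second route below.

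In these residual cases I would switch to the alternative bracket $[\phi_i X_\alpha,\ \xi_{-i}H]$ or $[\psi X_\beta, \eta X_{\alpha-\beta}]$ for suitable roots $\beta$. The most concrete choice is $[\phi_1 H,\ \phi_2\cdots\phi_n X_\alpha]$ with $\phi_1$ replaced via Lemma~\ref{lem:sl2rep}. For $\lambda_i=-\alpha$ we have $(\lambda_i,\alpha)=-2<0$, so applying Lemma~\ref{lem:sl2rep} with $-\alpha$ gives $\phi_1 = X_{-\alpha}\psi_1$ with $\psi_1\in\Phi_{0}$. We then compute $[\psi_1 X_{-\alpha},\ \phi_2 X_\alpha]$ and $[\psi_1 H,\ \phi_2X_\alpha]$; in both, the coefficient functional is $0+\lambda_2+\alpha=0$, which does not help directly. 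Instead I would use
\[
[\psi_1 X_\alpha,\ \phi_2 H_\alpha] = -2\psi_1\phi_2X_\alpha + (X_\alpha\psi_1)\phi_2H_\alpha - (H_\alpha\phi_2)\ldots,
\]
to express $\psi_1\phi_2X_\alpha$ modulo $\frakh$-terms, and then $[\phi_2 X_\alpha,\ \psi_1 X_{-\alpha}]$-type brackets to relate $\psi_1\phi_2 X_\alpha$ to $\phi_1\phi_2X_\alpha$ through the $\mathfrak{s}_\alpha$-action. The $n=3$, $\lambda_i=-\alpha/2$ case would be handled the same way, using Lemma~\ref{lem:sl2rep} to write $\phi_1=X_{-\alpha}\psi_1$ with $\psi_1\in\Phi_{\alpha/2}$. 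The generic step is routine; I expect this finite degenerate bookkeeping to be the only real work.
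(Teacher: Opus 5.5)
You have a genuine gap: you prove only the easy generic case. Your generic step is correct and mirrors the paper's. You use $[\phi_iH,\xi_{-i}X_\alpha]$, with coefficient $(\alpha+\sum_{j\neq i}\lambda_j)(H)$. The paper uses
\[
[\phi_iX_\alpha,\xi_{-i}H]=-(\alpha+\lambda_i)(H)\,\xi X_\alpha+\sum_{j\neq i}(X_\alpha\phi_j)\xi_{-j}H,
\]
whose coefficient involves only $\lambda_i$, so its degenerate set is just ``$\lambda_i=-\alpha$ for all $i$''. Your reduction of your own degenerate set to ($n=2$, $\lambda_1=\lambda_2=-\alpha$) and ($n=3$, $\lambda_i=-\alpha/2$) is also right. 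The gap is that neither residual case is actually proved, and they are the only nontrivial part of the proposition. The $n=3$ case is not impossible: for $\mathrm{SL}_2(\C)$ with $\Phi$ the matrix coefficients of $\C^2$ (or $\mathfrak{sp}_{2m}$ with $\alpha$ long), $\Phi_{-\alpha/2}\neq 0$. It is, however, settled at once by the paper's bracket, since $\alpha+\lambda_i=\alpha/2\neq0$.

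For $n=2$, $\lambda_1=\lambda_2=-\alpha$, you correctly obtain $\psi_1\in\Phi_0$ with $X_{-\alpha}\psi_1=\phi_1$. However, none of the brackets you name contains $\phi_1\phi_2X_\alpha$:
$[\psi_1H,\phi_2X_\alpha]$ has zero coefficient, and $[\psi_1X_\alpha,\phi_2H_\alpha]$ only yields $\psi_1\phi_2X_\alpha\in\mathcal{L}_2$. The remaining one,
\[
[\phi_2X_\alpha,\psi_1X_{-\alpha}]=\psi_1\phi_2H_\alpha+(X_\alpha\psi_1)\phi_2X_{-\alpha}-(X_{-\alpha}\phi_2)\psi_1X_\alpha ,
\]
misses it because $X_{-\alpha}$ differentiates $\phi_2$ rather than $\psi_1$. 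The missing step is to swap the roles:
\[
[\psi_1X_\alpha,\phi_2X_{-\alpha}]=\psi_1\phi_2H_\alpha+(X_\alpha\phi_2)\psi_1X_{-\alpha}-\phi_1\phi_2X_\alpha .
\]
The first term is in $\mathcal{L}_2$ by the $\frakh$-results. The middle term is a product of two weight-zero functions times $X_{-\alpha}$, so your own generic step with the root $-\alpha$ (coefficient $-\alpha(H)$) puts it in $\mathcal{L}_2$. Hence $\phi_1\phi_2X_\alpha\in\mathcal{L}_2$. This is the paper's argument; with its generic bracket, the single computation $[\psi_1X_\alpha,\xi_{-1}X_{-\alpha}]$ covers every $n$.

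Your ``$\mathfrak{s}_\alpha$-action'' idea can also be made to work, but only once you say what it is. $\mathcal{L}_2$ is stable under $\operatorname{ad}_{L_Y}$ for $Y\in\frakg$, because $[L_Y,\Phi\frakg_L]\subseteq\Phi\frakg_L$. Then
\[
[L_{X_{-\alpha}},\psi_1\phi_2X_\alpha]=\phi_1\phi_2X_\alpha+\psi_1(X_{-\alpha}\phi_2)X_\alpha-\psi_1\phi_2H_\alpha ,
\]
and its middle term again falls to the generic step.
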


\begin{proof}
First, consider the case where there is an index $i\in \{1,\ldots, n\}$ such that $\lambda_i \neq -\alpha$.
Let $H\in \frakh$ be such that $(\lambda_i + \alpha)(H)\neq 0$. Then,
$$
[\phi_i X_\alpha, \xi_{-i}H] = -(\alpha + \lambda_i)(H) \xi X_\alpha  + \sum_{j = 1, j\neq i}^n  (X_\alpha \phi_j) \xi_{-j} H.
$$
Since $(X_\alpha \phi_j) \xi_{-j} H\in \mathcal{L}_n$, we have that $\xi X_\alpha\in \mathcal{L}_n$.

Now, consider the case where $\lambda_i = -\alpha$ for all $i = 1,\ldots, n$.
We appeal to Lemma~\ref{lem:sl2rep} to obtain an element $\psi_1\in \Phi_0$ such that $X_{-\alpha} \psi_1 = \phi_1$ (since $\phi_1 \in \Phi_{-\alpha}$, the decomposition $\phi_1 = \phi'_1 + \phi''_1$ is such that $\phi''_1 = 0$).
For each $i = 2,\ldots, n$, let $\eta_{-i}:= \psi_1 \prod_{j = 2,j\neq i}^n \phi_j$. Then,
\begin{equation}\label{eq:laststep}
[\psi_1 X_{\alpha}, \xi_{-1}  X_{-\alpha}] = \psi_1 \xi_{-1} H_\alpha + \sum_{i = 2}^n (X_\alpha \phi_i) \eta_{-i} X_{-\alpha} - \xi X_\alpha \in \mathcal{L}_n.
\end{equation}
The first term on the right hand side of~\eqref{eq:laststep} belongs to $\mathcal{L}_n$. For the second term, note that $X_\alpha \phi_i\in \Phi_0$ and $-\alpha \neq 0$, so the arguments in the first paragraph of the proof imply that $(X_\alpha \phi_i) \eta_{-i} X_{-\alpha}\in \mathcal{L}_n$. We thus conclude that $\xi X_\alpha\in \mathcal{L}_n$.
\end{proof}

\section{The three complete generators}
\label{sec:shared}

We now return to the setting where $G$ is a connected, semi-simple matrix group over $\F$.
Note that every vector field $V$ in $\frakg_L \oplus \frakg_R$ is algebraic and complete; indeed, for $V= L_X + R_Y$, we have that $e^{tV}(x) = e^{Yt}xe^{Xt}$ for any $x\in G$ and any $t\in \F$.
Also, for any $X,Y\in\frakg$,
$$
[L_X,L_Y]=L_{[X,Y]},\quad [R_X,R_Y]=-R_{[X,Y]},\quad\mbox{and}\quad [L_X,R_Y]=0.
$$
Since $\frakg_L \oplus \frakg_R \approx \frakg \oplus \frakg$ is a semi-simple Lie algebra  (where $L_X + R_Y \mapsto (X,-Y)$ is a Lie algebra isomorphism), the following lemma is an immediate consequence of~\cite[Theorem~6]{Kuranishi1951}:

\begin{lemma}\label{lem:firsttwo}
There exist $W_1, W_2\in \mathfrak g_L\oplus \mathfrak g_R$ such that $\operatorname{Lie}\{W_1,W_2\} = \mathfrak g_L\oplus\mathfrak g_R$. 
\end{lemma}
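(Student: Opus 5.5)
The plan is to reduce Lemma~\ref{lem:firsttwo} to Kuranishi's theorem~\cite[Theorem~6]{Kuranishi1951}, which says that every semi-simple Lie algebra over $\R$ or $\C$ is generated, as a Lie algebra, by two elements. The steps are to check the semi-simplicity claim, apply the theorem to an abstract model, and transport the generators back to vector fields on $G$.

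First, verify that the map $\Psi:\frakg_L\oplus\frakg_R\to\frakg\oplus\frakg$, $L_X+R_Y\mapsto (X,-Y)$, is a Lie algebra isomorphism over $\F$.
\begin{itemize}
\item \emph{Well defined and linear.} The sum $\frakg_L+\frakg_R$ is direct whenever $\frakg$ has trivial center. Indeed, if $L_X=R_Y$, then evaluating at $x=e$ gives $X=Y$, so $xX=Xx$ for all $x\in G$. Differentiating along $x=e^{tZ}$ gives $[Z,X]=0$ for all $Z\in\frakg$, hence $X=0$ by semi-simplicity. Thus $\Psi$ is well defined and linear.
\item \emph{Bijective.} This is immediate, since $X\mapsto L_X$ and $Y\mapsto R_Y$ are injective.
\item \emph{Brackets.} Using the displayed relations $[L_X,L_Y]=L_{[X,Y]}$, $[R_X,R_Y]=-R_{[X,Y]}$ and $[L_X,R_Y]=0$, we compute
\[
[L_{X_1}+R_{Y_1},\,L_{X_2}+R_{Y_2}]=L_{[X_1,X_2]}-R_{[Y_1,Y_2]}.
\]
This element maps to $([X_1,X_2],[Y_1,Y_2])=([X_1,X_2],[-Y_1,-Y_2])$, which is the bracket of $(X_1,-Y_1)$ and $(X_2,-Y_2)$.
\end{itemize}

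Second, $\frakg\oplus\frakg$ is semi-simple, because a direct sum of semi-simple Lie algebras is semi-simple: its Killing form is the orthogonal sum of the two Killing forms and is therefore nondegenerate. Kuranishi's theorem then gives $A,B\in\frakg\oplus\frakg$ generating it, both for $\F=\R$ (real semi-simple Lie algebras) and for $\F=\C$. Set $W_1:=\Psi^{-1}(A)$ and $W_2:=\Psi^{-1}(B)$. Since $\Psi$ is an isomorphism, $\operatorname{Lie}\{W_1,W_2\}=\Psi^{-1}(\operatorname{Lie}\{A,B\})=\frakg_L\oplus\frakg_R$.

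There is no real obstacle beyond applying the cited theorem correctly. The only point needing care is that the Lie algebra generated inside $\mathfrak X(G)$ is taken over the same field $\F$ as the one Kuranishi's theorem uses. This holds because $\Psi$ is $\F$-linear, with $\frakg$ an $\F$-algebra in both the real and complex cases.
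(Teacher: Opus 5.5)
Your proposal is correct and follows the paper's route exactly. The paper also identifies $\frakg_L\oplus\frakg_R$ with the semi-simple algebra $\frakg\oplus\frakg$ via $L_X+R_Y\mapsto(X,-Y)$ and invokes Kuranishi's two-generator theorem~\cite[Theorem~6]{Kuranishi1951}; your extra checks (directness of $\frakg_L+\frakg_R$ from the trivial center, bracket compatibility, and semi-simplicity of $\frakg\oplus\frakg$ via the Killing form) fill in details the paper leaves implicit.
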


Explicit forms of $W_1$ and $W_2$ can be found in~\cite{CofDReal,CofDComplex}.
For the remainder of the section, we will exhibit the third complete vector field $W_3$ and show that $\Phi \mathfrak g_L \subseteq \operatorname{Lie}\{W_1,W_2,W_3\}$. We deal with the real- and complex-case in Subsections~\ref{ssec:realcase} and~\ref{ssec:complexcase}, respectively. 

We record below technical results that will be used in both cases.  
Let $\mathcal{A}(G)$ be the subspace of $\operatorname{End}(\mathbb{V})$ spanned by the elements of $G$.
Note that $\mathcal{A}(G)$ is closed under multiplication; indeed, for any two elements $u := \sum_{i} a_i x_i$ and $v := \sum_{j} b_j y_j$, with $a_i, b_j\in \F$ and $x_i,y_j\in G$, $$uv= \sum_{i,j} a_ib_j (x_iy_j) \in \mathcal{A}(G).$$
It follows that $\mathcal{A}(G)$ is a unital subalgebra of $\operatorname{End}(\mathbb{V})$.
Next, let $\mathcal{A}(\frakg)\subseteq \operatorname{End}(\mathbb{V})$ be the unital algebra generated by the elements of $\frakg$, known as the matrix representation of the universal enveloping algebra  of $\frakg$. We need the following lemma:

\begin{lemma}\label{lem:representation-algebra}
The two algebras $\mathcal{A}(G)$ and $\mathcal{A}(\frakg)$ are identical.
\end{lemma}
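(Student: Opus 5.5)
We prove the two inclusions $\mathcal{A}(\frakg)\subseteq\mathcal{A}(G)$ and $\mathcal{A}(G)\subseteq\mathcal{A}(\frakg)$ separately. Both algebras are finite-dimensional subspaces of $\operatorname{End}(\mathbb{V})$, hence closed in it. This lets us pass to limits and derivatives freely.

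\emph{Step 1: $\mathcal{A}(\frakg)\subseteq\mathcal{A}(G)$.} Fix $X\in\frakg$. The curve $t\mapsto e^{tX}$ lies in $G\subseteq\mathcal{A}(G)$. Since $\mathcal{A}(G)$ is a closed linear subspace, the derivative of this curve at $t=0$ also lies in $\mathcal{A}(G)$:
$$X=\lim_{t\to 0}\frac{e^{tX}-I}{t}\in\mathcal{A}(G).$$
Here $I\in G\subseteq\mathcal{A}(G)$, so the difference quotients are in $\mathcal{A}(G)$. Thus $\frakg\subseteq\mathcal{A}(G)$ and $I\in\mathcal{A}(G)$. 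Since $\mathcal{A}(G)$ is a unital algebra (shown just before the lemma), it contains the unital algebra generated by $\frakg$, namely $\mathcal{A}(\frakg)$.

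\emph{Step 2: $\mathcal{A}(G)\subseteq\mathcal{A}(\frakg)$.} Again fix $X\in\frakg$. The power series $e^{X}=\sum_k X^k/k!$ has all partial sums in $\mathcal{A}(\frakg)$, which is a finite-dimensional and hence closed subspace. Therefore $e^X\in\mathcal{A}(\frakg)$.

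Next we need that every element of $G$ is a finite product of exponentials. This holds because $G$ is connected, so it is generated by any neighborhood of the identity, in particular by $\exp(\frakg)$. For $\F=\C$, $G$ is a connected complex Lie subgroup with Lie algebra $\frakg$, so the same argument applies.

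Since $\mathcal{A}(\frakg)$ is closed under multiplication, each element of $G$ lies in $\mathcal{A}(\frakg)$. Taking linear spans gives $\mathcal{A}(G)\subseteq\mathcal{A}(\frakg)$.

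\emph{Scalar field.} For $\F=\R$, the spans are real spans and $\frakg$ is the real Lie algebra. Both steps only use real linear combinations and real limits, so no complexification is needed.

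\emph{Main obstacle.} The step needing the most care is the topological one: that both spans are closed, so that the limit and the series argument are legitimate, and that connectedness yields generation by $\exp(\frakg)$. Finite-dimensionality settles the first point, and the standard fact that a connected Lie group is generated by any identity neighborhood settles the second. Beyond these, no real difficulty is expected.
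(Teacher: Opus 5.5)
Your proposal is correct and follows the paper's proof: the difference quotient $(e^{tX}-I)/t$ in the closed subspace $\mathcal{A}(G)$ gives $\frakg\subseteq\mathcal{A}(G)$, and the exponential series in the closed subspace $\mathcal{A}(\frakg)$, together with generation of the connected group $G$ by $\exp(\frakg)$, gives the reverse inclusion. The only point to state explicitly is that $\exp(\frakg)$ contains a neighborhood of the identity, so the subgroup it generates is open and hence equals $G$.
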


\begin{proof}
For any $X\in\mathfrak g$, $\exp(tX)\in \mathcal{A}(G)$ for any $t\in \R$.
Since $\mathcal{A}(G)$ is closed, we obtain that $$X = \lim_{t\to 0} \frac{1}{t}(\exp(tX) - I)\in \mathcal{A}(G).$$ Since $\mathcal{A}(G)$ is a unital algebra, $\mathcal{A}(\frakg)\subseteq \mathcal{A}(G)$.
Conversely, $\mathcal{A}(\frakg)$ is a linear subspace of $\operatorname{End}(\mathbb{V})$ and hence, is closed. Thus, for every $X\in\mathfrak g$, the infinite series $\exp(X) = \sum_{k= 0}^\infty \frac{1}{k!} X^k$ is an element of~$\mathcal{A}(\frakg)$.
The subgroup of $G$ generated by $\exp(X)$, for all $X\in \frakg$, contains an open neighborhood of the identity and hence, is an open subgroup of $G$. Since $G$ is connected, this subgroup coincides with~$G$.  It follows that $G\subseteq \mathcal{A}(\frakg)$. Since $\mathcal{A}(\frakg)$ is a vector space, we conclude that $\mathcal{A}(G)\subseteq \mathcal{A}(\frakg)$.
\end{proof}

We next consider the trace map $\tr: \mathcal{A}(\frakg)\to \F$ and the associated bilinear trace form $\tr(AB)$ for $A, B\in \mathcal{A}(\frakg)$. The {\it kernel} of the trace form is $\{A \in \mathcal{A}(\frakg)\mid \tr(AB) = 0 \mbox{ for any } B\in \mathcal{A}(\frakg) \}$. The trace form is said to be {\it nondegenerate} if the kernel is trivial.
We have the following result:

\begin{lemma}\label{lem:trace-pairing}
The trace form $\tr:\mathcal{A}(\frakg)\times \mathcal{A}(\frakg)\to\F$  is nondegenerate.
\end{lemma}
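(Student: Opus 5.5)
The plan is to show that $\mathcal{A}(\frakg)$ is a semisimple associative algebra. From that, nondegeneracy of the trace form restricted to it follows by a standard argument. The key input is Weyl's complete reducibility theorem. Since $\frakg$ is semisimple, the representation of $\frakg$ (over $\F$) on $\mathbb{V}$ decomposes as $\mathbb{V} = \bigoplus_k \mathbb{V}_k$ with each $\mathbb{V}_k$ irreducible. For $\F=\R$ we can either apply Weyl's theorem for real semisimple Lie algebras directly or complexify; in the latter case $\mathcal{A}(\frakg)^\C$ is the algebra generated by $\frakg^\C$ on $\mathbb{V}^\C$, and nondegeneracy over $\C$ of a real bilinear form implies nondegeneracy over $\R$. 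So it suffices to treat $\F=\C$, but I will write the argument so that it works over either field.

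Step one is to show that the Jacobson radical $J$ of the finite-dimensional algebra $\mathcal{A}:=\mathcal{A}(\frakg)$ is zero. Since $\mathbb{V}$ is a semisimple $\mathcal{A}$-module and $\mathcal{A}$ acts faithfully on it, this is immediate. Each $\mathbb{V}_k$ is a simple $\mathcal{A}$-module, because an $\mathcal{A}$-submodule is the same as a $\frakg$-submodule. Hence $J\mathbb{V}_k=0$ for every $k$, so $J\mathbb{V}=0$, and $J=0$ since $\mathcal{A}\subseteq\operatorname{End}(\mathbb{V})$. By Wedderburn--Artin, $\mathcal{A}$ is then a finite product of simple algebras.

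Step two turns this into nondegeneracy via the standard nilpotency argument. Let $K$ denote the kernel of the trace form. It is a two-sided ideal, because $\tr((CA)B)=\tr(A(BC))$ and similarly for right multiplication. For $A\in K$, every power satisfies $A^m = A\cdot A^{m-1}$ with $A^{m-1}\in\mathcal{A}$, so $\tr(A^m)=0$ for all $m\ge1$. In characteristic zero this forces $A$ to be nilpotent as a matrix. Thus $K$ is an ideal consisting of nilpotent elements, hence a nil ideal of a finite-dimensional algebra, and so $K\subseteq J=0$.

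I do not expect a real obstacle here. The only point needing care is the real case: one must make sure complete reducibility is applied to $\frakg$ acting on $\mathbb{V}$ over $\R$, which is valid for real semisimple Lie algebras. Alternatively, one can note that the nilpotency argument of step two works verbatim over $\R$ once $J=0$ is known. Step one needs only that $\mathbb{V}$ is semisimple as an $\mathcal{A}$-module, and that holds over $\R$ by Weyl's theorem.
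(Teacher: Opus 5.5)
Your proof is correct, and it reaches the conclusion by a different mechanism than the paper.

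The paper first complexifies. It then writes $\mathbb{V}\approx\bigoplus_\ell m_\ell\mathbb{V}_\ell$ with pairwise nonisomorphic simple modules and expresses $\tr(A)=\sum_\ell m_\ell\tr(A|_{\mathbb{V}_\ell})$. The key step is an external citation (Brown--Yakimov, Lemma~3.2(b)), which identifies the kernel of the trace form with $\bigcap_\ell\operatorname{Ann}(\mathbb{V}_\ell)=\operatorname{Ann}(\mathbb{V})$. That annihilator vanishes because $\mathcal{A}(\frakg)\subseteq\operatorname{End}(\mathbb{V})$.

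You instead prove $J(\mathcal{A})=0$ directly from complete reducibility and faithfulness. You then observe that the kernel $K$ is a two-sided ideal whose elements satisfy $\tr(A^m)=0$ for all $m\geq 1$; the case $m=1$ uses $I\in\mathcal{A}$. In characteristic zero this makes every element of $K$ nilpotent, so $K$ is a nil ideal and lies in $J=0$.

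What your route buys:
\begin{itemize}
\item It is self-contained. The cited lemma is replaced by two textbook facts: nil ideals lie in the Jacobson radical, and vanishing power traces force nilpotency in characteristic zero.
\item It runs verbatim over $\R$ with no complexification, since Weyl's theorem holds over any field of characteristic zero.
\item Your appeal to Wedderburn--Artin is superfluous; step two needs only $J=0$.
\end{itemize}

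The paper's route is shorter on the page and displays the block structure of the trace, with the nonzero multiplicities $m_\ell$ visibly weighting nondegenerate block forms. The cost is that the decisive step is outsourced. Both arguments use complete reducibility in the same essential way; the paper does so implicitly when it writes the isotypic decomposition of $\mathbb{V}$.
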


\begin{proof}
For the case $\F=\R$, we complexify $\mathcal{A}(\frakg)$,  extend the trace form bilinearly to $\mathcal{A}(\frakg)^\C\times \mathcal{A}(\frakg)^\C$, and show that   $\tr:\mathcal{A}(\frakg)^\C\times \mathcal{A}(\frakg)^\C\to\C$ is nondegenerate. 
For ease of notation, we will omit their super-index for the rest of the proof. All spaces below are over~$\C$.
Decompose $\mathbb{V} \approx \oplus_{\ell} m_\ell \mathbb{V}_{\ell}$, where the $\mathbb{V}_{\ell}$'s are pairwise nonisomorphic simple $\mathcal{A}(\frakg)$-modules and the $m_\ell$'s are their multiplicities. 
It follows that for any $A\in \mathcal{A}(\frakg)$,
$\tr(A) = \sum_{\ell} m_\ell \tr(A |_{\mathbb{V}_\ell})$.
Let $\operatorname{Ann}(\mathbb{V}_\ell)$ be the annihilator of $\mathbb{V}_\ell$, i.e.,
$$
\operatorname{Ann}(\mathbb{V}_\ell) := \left\{ A\in \mathcal{A}(\frakg) \mid A v=0 \mbox{ for any } v\in \mathbb{V}_\ell \right\}.
$$
Then, by~\cite[Lemma~3.2(b)]{BrownYakimov}, the kernel of the trace form coincides with $\cap_\ell\operatorname{Ann}(\mathbb{V}_\ell) = \operatorname{Ann}(\mathbb{V})$. Finally, since
$\mathcal{A}(\frakg)$ is a subspace of $\operatorname{End}(\mathbb{V})$, we conclude that $\operatorname{Ann}(\mathbb{V}) = \{0\}$.
\end{proof}

In the sequel, we use $\phi_A\in \Phi$, for $A\in\operatorname{End}(\mathbb{V})$, to denote the function $\phi_A(x):=\tr(xA)$. It is clear that $\Phi = \{\phi_A \mid A\in\operatorname{End}(\mathbb{V})\}$. Also, note that  
$L_X\phi_A=\phi_{XA}$ and $R_X\phi_A=\phi_{AX}$. We further have the following lemma: 

\begin{lemma}\label{lem:trace-coefficient-cyclic}
The map $A\mapsto\phi_A$ is a linear isomorphism from $\mathcal{A}(\frakg)$ to $\Phi$.
In particular, $\phi_I$ is a {\it cyclic vector} of $\Phi$ as a $\mathcal{U}(\frakg_R)$-module, i.e., $\mathcal{U}(\frakg_R)\phi_I=\Phi$.
\end{lemma}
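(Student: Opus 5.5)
The plan is to prove linearity and surjectivity of $A\mapsto\phi_A$ on $\mathcal{A}(\frakg)$, then injectivity, and finally read off cyclicity from the formula $R_X\phi_A=\phi_{AX}$.

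\emph{Linearity and surjectivity.} Linearity is immediate from linearity of the trace. For surjectivity, recall that $\Phi=\{\phi_A\mid A\in\operatorname{End}(\mathbb{V})\}$. Given any $A\in\operatorname{End}(\mathbb{V})$, I will replace it by an element of $\mathcal{A}(\frakg)$ without changing $\phi_A$. By Lemma~\ref{lem:trace-pairing}, the trace form on $\mathcal{A}(\frakg)$ is nondegenerate. Hence the linear functional $B\mapsto\tr(BA)$ on $\mathcal{A}(\frakg)$ is represented by a unique $A'\in\mathcal{A}(\frakg)$, i.e., $\tr(BA)=\tr(BA')$ for all $B\in\mathcal{A}(\frakg)$. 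By Lemma~\ref{lem:representation-algebra}, every $x\in G$ lies in $\mathcal{A}(G)=\mathcal{A}(\frakg)$. Taking $B=x$ gives $\phi_A(x)=\tr(xA)=\tr(xA')=\phi_{A'}(x)$, so $\phi_A=\phi_{A'}$.

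\emph{Injectivity.} Suppose $A\in\mathcal{A}(\frakg)$ and $\phi_A\equiv0$ on $G$. Then $\tr(xA)=0$ for all $x\in G$. By linearity, $\tr(BA)=0$ for all $B$ in the span of $G$, which is $\mathcal{A}(G)=\mathcal{A}(\frakg)$ by Lemma~\ref{lem:representation-algebra}. Nondegeneracy (Lemma~\ref{lem:trace-pairing}) then gives $A=0$. For $\F=\R$, Lemma~\ref{lem:trace-pairing} was proved over the complexification. Nondegeneracy over $\C$ restricts to nondegeneracy on the real form, since a real $A$ in the kernel over $\R$ is in the kernel over $\C$. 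So no extra work is needed here.

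\emph{Cyclicity.} Iterating $R_X\phi_A=\phi_{AX}$ gives $R_{X_1}\cdots R_{X_k}\phi_I=\phi_{X_k\cdots X_1}$, up to the ordering convention. Therefore $\mathcal{U}(\frakg_R)\phi_I$ is the image under $A\mapsto\phi_A$ of the unital algebra generated by $\frakg$ in $\operatorname{End}(\mathbb{V})$, namely $\mathcal{A}(\frakg)$. The reversal of order is harmless because the set of all words is closed under reversal. By the isomorphism just proved, this image is all of $\Phi$.

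The only step requiring any care is surjectivity, and specifically the observation that the evaluation points $x\in G$ lie in $\mathcal{A}(\frakg)$. Lemma~\ref{lem:representation-algebra} supplies exactly this.
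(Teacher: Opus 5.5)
Your proposal is correct and follows the paper's proof. As there, injectivity and surjectivity both come from $\mathcal{A}(G)=\mathcal{A}(\frakg)$ (Lemma~\ref{lem:representation-algebra}) together with nondegeneracy of the trace form (Lemma~\ref{lem:trace-pairing}), cyclicity is read off from $R_X\phi_A=\phi_{AX}$, and your added remark that complex nondegeneracy restricts to the real form fills a step the paper leaves implicit.
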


\begin{proof}
We first show that the map is injective. 
Let $A \in \mathcal{A}(\frakg)$ be such that $\phi_A = 0$. 
Since $\mathcal{A}(G)$ is spanned by elements of $G$, $\phi_A(u) = 0$ for all $u\in \mathcal{A}(G)$. 
By Lemma~\ref{lem:representation-algebra}, $\mathcal{A}(G) = \mathcal{A}(\frakg)$, so $\tr(AB) = 0$ for all $B\in \mathcal{A}(\frakg)$. Then, by Lemma~\ref{lem:trace-pairing}, $A = 0$. 
We next show that the map is surjective. Note that every element $\phi\in\Phi$ can be obtained by restricting a linear functional $\tilde \phi$ on $\mathcal{A}(G)$ to $G$. 
Using again Lemmas~\ref{lem:representation-algebra} and~\ref{lem:trace-pairing}, there must exist an $A\in \mathcal{A}(\frakg)$ such that $\tilde \phi(B) = \tr(AB)$ for all $B\in \mathcal{A}(\frakg)$. It then follows that $\phi = \phi_A$. 
It is then an immediate consequence that $\mathcal{U}(\frakg_R)\phi_I=\{\phi_B\mid B\in\mathcal{A}(\frakg)\}=\Phi$.
\end{proof}

\subsection{The third generator for the real case}\label{ssec:realcase}

Throughout this subsection, we let $\F = \R$ and $G$ be a connected, semi-simple matrix group over~$\R$.  
Decompose $\mathfrak g=\oplus_{\ell = 1}^k \mathfrak g_\ell$, where each $\frakg_\ell$ is a simple ideal of~$\frakg$. For each $\ell$, let
$\mathfrak g_\ell=\mathfrak k_\ell\oplus\mathfrak p_\ell$ be a Cartan decomposition, with $\mathfrak{k}_\ell$ a maximally compact subalgebra of $\frakg_\ell$.
Let $Z_\ell\in \mathfrak{k}_\ell
$ be a nonzero element, and
$Z:= \sum_{\ell = 1}^k Z_\ell$.
We then define
\begin{equation}\label{eq:defineW3real}
W_3(x):= \phi_I(x) L_Z(x) \in \xalg(G).
\end{equation}
We have the following fact:

\begin{lemma}\label{lem:real-complete}
    The vector field $W_3$ is $\R$-complete.
\end{lemma}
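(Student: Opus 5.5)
The plan is to solve the flow of $W_3$ explicitly by reducing it to a scalar ordinary differential equation. The key observation is that $W_3 = \phi_I L_Z$ is a function multiple of a single left-invariant vector field. Therefore every integral curve of $W_3$ should stay on the orbit $x_0\exp(\R Z)$ of its initial point $x_0$, and only the speed along that orbit varies.

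Concretely, fix $x_0\in G$ and make the ansatz $x(t) = x_0\exp(s(t)Z)$ for an unknown real function $s$ with $s(0)=0$. Differentiating gives $\dot x(t) = \dot s(t)\, x_0\exp(s(t)Z)Z = \dot s(t)\, L_Z(x(t))$. The equation $\dot x = W_3(x) = \tr(x)\, x Z$ is then satisfied as soon as $s$ solves the autonomous scalar equation
\begin{equation*}
\dot s = f(s), \qquad f(s):= \tr\bigl(x_0\exp(sZ)\bigr) = \phi_I\bigl(x_0\exp(sZ)\bigr).
\end{equation*}
Here $f$ is real-analytic on all of $\R$. The solution of this scalar equation defines, via the ansatz, a solution of the flow equation of $W_3$. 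By uniqueness of integral curves of the smooth vector field $W_3$, it is the integral curve through $x_0$. Hence it suffices to show that $\dot s = f(s)$ has a solution defined for all $t\in\R$.

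For global existence I will show that $f$ is bounded on $\R$. Then $|\dot s|\le \sup|f|<\infty$, so $s(t)$ cannot escape to infinity in finite time, and the maximal solution is defined on all of $\R$. Boundedness of $f$ follows from boundedness of the one-parameter group $s\mapsto \exp(sZ)$ in $\operatorname{End}(\mathbb{V})$. Since $Z=\sum_\ell Z_\ell$ with $Z_\ell\in\mathfrak{k}_\ell$, we have $\exp(sZ)\in K$, where $K$ is the analytic subgroup of $G$ with Lie algebra $\mathfrak{k}=\oplus_\ell\mathfrak{k}_\ell$.

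The main obstacle is justifying that $K$ is compact, or at least that $\exp(\R Z)$ has compact closure in $G$. This is where the hypothesis that $G$ is a matrix group enters: a connected linear semi-simple Lie group has finite center, and for such groups the analytic subgroup corresponding to the maximally compact part of a Cartan decomposition is compact (as in~\cite{Knapp}). If this citation needs care, I would fall back on a direct argument. Since $\mathfrak{k}$ is a compact Lie algebra, $\mathbb{V}$ restricted to $\mathfrak{k}$ admits an invariant inner product. I would attempt this via Weyl's unitary trick applied to the complexification $\frakg^\C$ and its compact real form $\mathfrak{k}\oplus i\mathfrak{p}$, since the representation on $\mathbb{V}^\C$ integrates to the simply connected compact group of $\mathfrak{k}\oplus i\mathfrak{p}$. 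Then $Z$ acts skew-adjointly, so $\exp(sZ)$ is unitary and bounded uniformly in $s$. Either route gives $\sup_{s}|f(s)|<\infty$ and hence the $\R$-completeness of $W_3$.
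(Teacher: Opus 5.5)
Your proof is correct and essentially the paper's: both write the integral curve through $x$ as $x\exp(s(t)Z)$ and rule out finite-time escape because $\exp(\R Z)\subseteq K$ is compact. The paper applies the Escape Lemma to the compact set $xK$, while you show the scalar equation $\dot s = f(s)$ has a bounded right-hand side. Your unitary-trick fallback is also valid and avoids citing compactness of $K$, provided it goes through the semisimple compact form $\mathfrak{k}\oplus i\mathfrak{p}$, as you propose; compactness of $\mathfrak{k}$ alone would not yield an invariant inner product.
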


\begin{proof}
Let $G = K\exp{\mathfrak{p}}$ be the global Cartan decomposition of $G$, where $\mathfrak{p} := \oplus_{\ell = 1}^k \mathfrak{p}_\ell$ and $K$ is the compact subgroup of $G$ whose Lie algebra is $\mathfrak{k} := \oplus_{\ell = 1}^k \mathfrak{k}_\ell$. For any $x\in G$, assuming that $e^{s W_3}(x)$ exists for all $s\in [0,t]$, we have that
$$e^{t W_3}(x) = x\exp\left (\int_0^t \phi_I(e^{s W_3}(x)) ds  Z \right ).$$
Since $Z\in \mathfrak{k}$, $e^{t W_3}(x)$ is in the compact set~$xK$.
It follows from the Escape Lemma~\cite[Lemma~9.19]{GTM218} that the maximal integral curve $t \mapsto e^{t W_3}(x)$ must be well defined for all $t\in \R$ and for all $x\in G$.
\end{proof}

The main result of the subsection is the following theorem:

\begin{theorem}
\label{thm:degreeone}
It holds that
$\Phi\mathfrak g_L\subseteq\operatorname{Lie}\{W_1,W_2,W_3\}.$
\end{theorem}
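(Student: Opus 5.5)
The plan is to first place all of $\Phi L_Z$ inside $\mathcal{L}:=\operatorname{Lie}\{W_1,W_2,W_3\}$ using the right-invariant fields, and then spread $Z$ over all of $\frakg$ using the left-invariant fields. By Lemma~\ref{lem:firsttwo}, $\frakg_L\oplus\frakg_R\subseteq \mathcal{L}$, so we may bracket $W_3$ freely with any $L_X$ and $R_Y$.

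\emph{Step 1: right-invariant fields.} Since $[L_X,R_Y]=0$, for $\phi\in\Phi$ and $Y\in\frakg$ we get
$$[R_Y,\phi L_Z]=(R_Y\phi)L_Z .$$
Moreover $R_Y\phi_A=\phi_{AY}$. Starting from $W_3=\phi_I L_Z$ and bracketing repeatedly with elements of $\frakg_R$, we obtain $(\mathcal{U}(\frakg_R)\phi_I)L_Z\subseteq\mathcal{L}$. By Lemma~\ref{lem:trace-coefficient-cyclic}, $\phi_I$ is cyclic, so $\mathcal{U}(\frakg_R)\phi_I=\Phi$, and therefore $\Phi L_Z\subseteq\mathcal{L}$.

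\emph{Step 2: left-invariant fields.} Define
$$S:=\{Y\in\frakg \mid \Phi L_Y\subseteq\mathcal{L}\}.$$
This is a linear subspace, and Step~1 gives $Z\in S$. For $Y\in S$, $X\in\frakg$ and $\phi\in\Phi$, we have
$$[L_X,\phi L_Y]=(L_X\phi)L_Y+\phi L_{[X,Y]} .$$
Because $\Phi$ is closed under $\frakg_L$ (as $L_X\phi_A=\phi_{XA}$), the first term already lies in $\Phi L_Y\subseteq\mathcal{L}$. Hence $\phi L_{[X,Y]}\in\mathcal{L}$, which shows $S$ is $\operatorname{ad}(\frakg)$-invariant, i.e.\ an ideal of $\frakg$ containing $Z$.

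\emph{Step 3: the ideal generated by $Z$ is all of $\frakg$.} This is the only point needing care, since in the real case distinct simple ideals may be isomorphic as Lie algebras. The intended argument is as follows.
\begin{itemize}
\item The decomposition $\frakg=\oplus_\ell\frakg_\ell$ is a decomposition into $\operatorname{ad}(\frakg)$-modules.
\item Each $\frakg_\ell$ is irreducible, because $\frakg_\ell$ is simple.
\item These modules are pairwise non-isomorphic, because $\frakg_j$ acts trivially on $\frakg_\ell$ for $j\neq\ell$ but nontrivially on $\frakg_j$.
\end{itemize}
Consequently every ideal is a sum of some of the $\frakg_\ell$. Alternatively and more directly, for an ideal $S$ we have $[\frakg_\ell,S]\subseteq S\cap\frakg_\ell$, and $[\frakg_\ell,Z]=[\frakg_\ell,Z_\ell]\neq0$ because $\frakg_\ell$ has trivial center and $Z_\ell\neq0$. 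So $S\cap\frakg_\ell$ is a nonzero ideal of the simple $\frakg_\ell$, hence equals $\frakg_\ell$. Therefore $S=\frakg$, i.e.\ $\Phi\frakg_L\subseteq\mathcal{L}$, which is the statement of Theorem~\ref{thm:degreeone}.

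I expect no serious obstacle here; the substantive inputs are the cyclicity of $\phi_I$ (Lemma~\ref{lem:trace-coefficient-cyclic}) and the choice of $Z$ with a nonzero component in every simple ideal.
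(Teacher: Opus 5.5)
Your proposal is correct and follows essentially the same route as the paper: cyclicity of $\phi_I$ under $\mathcal{U}(\frakg_R)$ gives $\Phi L_Z$, then the set of $X$ with $\Phi L_X$ in the Lie algebra is an ideal containing $Z$, hence all of $\frakg$. The only differences are cosmetic: you work directly in $\operatorname{Lie}\{W_1,W_2,W_3\}$ rather than in $S=\mathcal{U}(\frakg_L\oplus\frakg_R)W_3$, and you spell out why an ideal containing $Z$ must be $\frakg$, where the paper simply cites the standard structure of ideals of a semi-simple Lie algebra.
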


\begin{proof}
Let $S:=\mathcal{U}(\frakg_L \oplus \frakg_R) W_3$ be the subspace of $\xalg(G)$ spanned by $W_3$ and all the right-normed Lie products $[V_n, \cdots, [V_1, W_3]]$ for $V_i\in \frakg_L \oplus \frakg_R$.
By Lemma~\ref{lem:firsttwo}, it suffices to show that $\Phi\frakg_L \subseteq S$.
Consider the set
$$\mathfrak{i} := \left\{X\in\frakg\mid \Phi L_X\subseteq S \right\}.$$
It is clear that $\mathfrak{i}$ is a linear subspace of $\frakg$. It is also nontrivial because $Z$ belongs to $\mathfrak{i}$; indeed, since $[\frakg_L, \frakg_R] = 0$ and by Lemma~\ref{lem:trace-coefficient-cyclic},
$$\mathcal{U}(\frakg_R)W_3 = (\mathcal{U}(\frakg_R)\phi_I ) L_Z = \Phi L_Z.$$
Furthermore, for any $X\in\mathfrak{i}$, any $Y\in\mathfrak{g}$, and any $\phi\in\Phi$,
\begin{equation*}
[L_Y, \phi L_X]=(L_Y \phi)L_X + \phi L_{[Y,X]}.
\end{equation*}
Because $S$ is closed under $\frakg_L$,  $[L_Y, \phi L_X]\in S$.
Also, since $L_Y\phi\in\Phi$ and $X\in\mathfrak{i}$,
$(L_Y\phi)L_X\in  S$. Since $S$ is a vector space, we obtain that $\phi L_{[Y,X]}\in S$, which holds for any $\phi\in\Phi$. It follows that $[Y,X]\in\mathfrak{i}$ for any $Y\in \frakg$ and any $X\in \mathfrak{i}$, so $\mathfrak{i}$ is an ideal of $\mathfrak g$.
Every ideal of $\mathfrak g=\oplus_{\ell = 1}^k \mathfrak{g}_\ell$ is a direct sum of a subfamily of the simple ideals $\mathfrak g_\ell$. 
Since $Z = \sum_{\ell = 1}^k Z_\ell\in\mathfrak{i}$ 
and since each $Z_\ell$ is nonzero, it holds that $\mathfrak{i} = \mathfrak{g}$. We conclude that $\Phi\mathfrak g_L \subseteq S\subseteq\operatorname{Lie}\{W_1,W_2,W_3\}$.
\end{proof}

\subsection{\texorpdfstring{The third generator for the complex case}{The complex case}}
\label{ssec:complexcase}

Throughout this subsection, let $\F=\C$ and $G$ be a connected, semi-simple matrix group over~$\C$. 
It is known that $G$ admits a unique structure of an algebraic group~\cite[Chapter~4, Section~3.4, Corollary to Theorem~6]{OnishchikV} 
and that any holomorphic homomorphism from $G$ to $\mathrm{GL}(\mathbb{V}')$, for some space $\mathbb{V}'$, is polynomial~\cite[Chapter~3, Section~3.4, Theorem~4]{OnishchikV}. 
This fact allows us to work with any faithful, holomorphic representation of~$G$. 
Our choice of the representation~$\mathbb{V}$ follows the standard construction outlined in~\cite[Chapter~4, Section~3.4, Problems~44 and~45]{OnishchikV}. We provide below necessary detail for completeness of presentation.

Recall that $\frakg=\oplus_{\ell=1}^k\frakg_\ell$, where each $\frakg_\ell$ is a simple ideal of~$\frakg$. With slight abuse of notation, we also use $\frakg=\bigoplus_{\ell=1}^k\frakg_\ell$ to denote the adjoint representation of~$G$, with the $\frakg_\ell$'s its irreducible summands. 
Next, let $\tilde G$ be the simply connected covering group of~$G$. 
The center $Z(\tilde G)$ comprises all elements $z = \exp(2\pi \mathrm{i} H)$, for $H\in \frakh$, such that $\alpha(H) \in \mathbb{Z}$ for all $\alpha\in \Delta$.  
The character group of $Z(\tilde G)$ is isomorphic to $\Lambda_W/\Lambda_R$. 
Specifically, for each $\lambda\in \Lambda_W$, the associated character $\chi_\lambda$ is given by $\chi_\lambda(z) = e^{2\pi\mathrm{i}\lambda(H)}$. It is known (see, e.g.,~\cite[Chapter~VIII, Section~7.3, Proposition~8]{BourbakiLie79}) that each class $[\lambda]\in \Lambda_W/\Lambda_R$ contains a unique minuscule weight.

We write $\tilde G = \tilde G_1\times \cdots \times \tilde G_k$, where each $\tilde G_\ell$ is the simply connected Lie group with~$\frakg_\ell$ its Lie algebra. 
The above arguments also apply to the center $Z(\tilde G_\ell)$. For a given $\lambda\in \Lambda_W/\Lambda_R$, we write $\lambda = (\lambda_1,\ldots, \lambda_k)$, with each $\lambda_\ell\in \Lambda_{W_\ell}/\Lambda_{R_\ell}$ a minuscule weight.  

Let $K\subseteq Z(\tilde G)$ be the kernel of the covering map $\tilde G\to G$, so $Z(G) = Z(\tilde G)/K$. 
We assign to each nontrivial character $\chi_\lambda$ of $Z(G)$ a minuscule weight $\lambda$, and let $\Lambda^*$ be the collection of all such weights. 
For each $\lambda = (\lambda_1,\ldots,\lambda_k)\in \Lambda^*$, let $\mathbb{V}^{\lambda_\ell}$ be the representation of $\frakg_\ell$ with $\lambda_\ell$ the highest weight; such a representation exists by the highest weight theorem. 
Let $\mathbb{V}^{\lambda}:= \mathbb{V}^{\lambda_1}\otimes \cdots \otimes \mathbb{V}^{\lambda_k}$ be the irreducible representation of $\frakg$ with $\lambda$ the highest weight, obtained by the external tensor product. 
Since $\tilde G$ is simply connected, this Lie algebra representation can be lifted 
to 
the unique holomorphic representation of~$\tilde G$ whose differential at the identity~$e$ coincides with the Lie algebra representation 
(see, e.g.,~\cite[Chapter~III, Section~6.1, Theorem~1]{BourbakiLie13}). 
By Schur's lemma, each element~$z=(z_1,\ldots, z_k)$ in $Z(\tilde G)$ acts on $\mathbb{V}^\lambda$ as the scalar~$\chi_\lambda(z) = \prod_{\ell = 1}^k \chi_{\lambda_\ell}(z_\ell)$. In particular, $K$ acts trivially, so $\mathbb{V}^\lambda$ descends to a representation of $G = \tilde G/K$.

We now let $\mathbb{V}$ be the representation of $G$ as the direct sum of the adjoint representation~$\frakg$ and the $\mathbb{V}^\lambda$'s: 
\begin{equation}\label{eq:defV}
\mathbb{V}:=\bigoplus_{\ell=1}^k \frakg_\ell  \oplus \bigoplus_{\lambda\in\Lambda^*}\mathbb V^\lambda.
\end{equation}
By construction, the summands are nontrivial, irreducible, and pairwise nonisomorphic. It follows from~\cite[Chapter~VIII, Section~5.3, Corollary~1 to Proposition~4]{BourbakiAlgebra8} that
\begin{equation}\label{eq:decomp}
\mathcal A(\frakg)=\bigoplus_{\ell=1}^k\operatorname{End}(\frakg_\ell) \oplus
\bigoplus_{\lambda\in \Lambda^*}\operatorname{End}(\mathbb V^\lambda). 
\end{equation}
We have the following fact:

\begin{lemma}\label{lem:faithful}
The representation $\mathbb{V}$ of $G$ is faithful.
\end{lemma}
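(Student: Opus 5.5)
We need to show that the kernel of $\tilde G \to \mathrm{GL}(\mathbb{V})$ is exactly $K$. Let $g\in\tilde G$ act trivially on $\mathbb{V}$. The plan has three steps.

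\emph{Step 1: reduce to the center.} First look at the adjoint summand $\bigoplus_\ell \frakg_\ell = \frakg$. Since $g$ acts trivially on it, $\operatorname{Ad}(g)=\mathrm{id}$. Because $\tilde G$ is connected, the kernel of $\operatorname{Ad}$ is the center, so $g\in Z(\tilde G)$.

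\emph{Step 2: use Schur's lemma on each $\mathbb{V}^\lambda$.} By Schur's lemma, $g$ acts on $\mathbb{V}^\lambda$ as the scalar $\chi_\lambda(g)$, so $\chi_\lambda(g)=1$ for every $\lambda\in\Lambda^*$.

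\emph{Step 3: conclude $g\in K$ by character theory.} Let $\bar g$ be the image of $g$ in $Z(G)=Z(\tilde G)/K$, which is a finite abelian group. Characters of $Z(G)$ are exactly the characters of $Z(\tilde G)$ that are trivial on $K$.
\begin{itemize}
\item Each nontrivial character of $Z(G)$ comes from a character of $Z(\tilde G)$ trivial on $K$. By the correspondence $\Lambda_W/\Lambda_R\cong\widehat{Z(\tilde G)}$ and the fact that each class has a unique minuscule representative, this character equals $\chi_\lambda$ for the assigned $\lambda\in\Lambda^*$.
\item Hence every character of $Z(G)$ is trivial at $\bar g$: the trivial character always is, and the nontrivial ones are by Step 2.
\item Characters of a finite abelian group separate points, so $\bar g$ is the identity, i.e., $g\in K$.
\end{itemize}
Conversely, $K$ acts trivially on $\mathbb{V}$, as noted in the construction and because $K$ is central, hence in the kernel of $\operatorname{Ad}$. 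So the kernel is exactly $K$, and $G=\tilde G/K$ acts faithfully.

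The main point needing care is the identification in Step 3. One must check that the scalar by which $z=\exp(2\pi\mathrm{i}H)$ acts on $\mathbb{V}^\lambda$ is $e^{2\pi\mathrm{i}\lambda(H)}$, which follows by evaluating on a highest-weight vector. This is well defined on the class of $\lambda$ because $\alpha(H)\in\mathbb{Z}$ for all roots $\alpha$. The tensor-product structure across the simple factors is consistent with this, since the scalar on the external tensor product is the product of the scalars on the factors, as stated in the construction.
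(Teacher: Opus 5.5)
Your proof is correct and follows the same route as the paper: the adjoint summand cuts the kernel down to the center, and Schur's lemma on the summands $\mathbb{V}^\lambda$, $\lambda\in\Lambda^*$, detects every nontrivial central element. The only difference is that you make explicit a step the paper's short proof leaves implicit, namely that characters of the finite abelian group $Z(G)=Z(\tilde G)/K$ separate points, which is what gives some $\chi_\lambda(z)\neq 1$ for each $z\neq e$; lifting to $\tilde G$ is just a convenient way to set this up.
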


\begin{proof}
Since $G$ is connected, the kernel of its adjoint representation is $Z(G)$. For each $z\in Z(G)$ with $z \neq e$, there exists a $\lambda \in \Lambda^*$ such that $\chi_\lambda(z) \neq 1$, so $z$ acts nontrivially on $\mathbb V^\lambda$. It follows that the representation $\mathbb{V}$ is faithful.
\end{proof}

We now construct the third vector field $W_3$.  
Let $\mu_\ell$ be the highest root of $\frakg_\ell$,  
$Q_\ell$ be a nonzero element of the root space $\frakg_{\mu_\ell}$, and \begin{equation}\label{eq:defQ}
Q:=\sum_{\ell=1}^kQ_\ell \in \frakg.
\end{equation}
Since each summand of $\mathbb{V}$ in~\eqref{eq:defV} is invariant under~$Q$, the kernel of $Q^2$ is the direct sum of the kernels of $Q^2$ restricted to the summands. Then, by~\eqref{eq:decomp}, there exists a projection operator $P\in \mathcal{A}(\frakg)$ such that 
$$\operatorname{im} P = \operatorname{ker} Q^2 =: \mathbb{U}.$$
Now, define
\begin{equation}\label{eq:defineW3complex}
W_3(x):=\phi_P(x)L_Q(x) \in \xalg(G).
\end{equation}
We have the following fact:

\begin{lemma}\label{lem:complete}
The vector field $W_3$ is $\C$-complete.
\end{lemma}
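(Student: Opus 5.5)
The plan is to solve the flow of $W_3$ in closed form. We look for an integral curve of the form $x(t)=x\exp(s(t)Q)$, where $s(t)$ is a scalar function with $s(0)=0$. This reduces the ODE on $G$ to a scalar ODE for $s$, and the choice $\operatorname{im}P=\ker Q^2$ will make that scalar ODE linear.

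First, I will check which scalar equation $s$ must satisfy. Differentiating the ansatz gives
$$\dot x(t)=x\exp(s(t)Q)\,Q\,\dot s(t)=\dot s(t)\,L_Q(x(t)).$$
So $x(t)$ is an integral curve of $W_3=\phi_PL_Q$ exactly when
$$\dot s(t)=\phi_P(x\exp(s(t)Q))=\tr\big(x\exp(s(t)Q)P\big).$$

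Next, I will expand $\exp(sQ)P$. Since $\operatorname{im}P=\ker Q^2$, we have $Q^2P=0$, and hence $Q^jP=0$ for every $j\ge 2$. Therefore
$$\exp(sQ)P=\sum_{j\ge 0}\frac{s^j}{j!}Q^jP=P+sQP,$$
and the ODE becomes
$$\dot s=a+bs,\qquad a:=\tr(xP)=\phi_P(x),\quad b:=\tr(xQP)=\phi_{QP}(x).$$
Both $a$ and $b$ are constants once the initial point $x$ is fixed. The solution with $s(0)=0$ is $s(t)=at$ if $b=0$, and $s(t)=\frac{a}{b}(e^{bt}-1)$ if $b\ne 0$. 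In either case $s$ is an entire function of $t\in\C$.

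Finally, I will conclude completeness. Since $\exp(s(t)Q)\in G$ for every $t\in\C$, the curve $t\mapsto x\exp(s(t)Q)$ is a holomorphic curve in $G$ defined on all of $\C$. It passes through $x$ at $t=0$ and solves the ODE for $W_3$. By uniqueness of solutions to holomorphic ODEs, it coincides with the maximal integral curve through $x$. Hence $e^{tW_3}(x)$ is defined for all $t\in\C$ and all $x\in G$, so $W_3$ is $\C$-complete.

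The only real point is the identity $Q^2P=0$, which is built into the definition of $P$. It is what truncates the exponential series and linearizes the equation for $s$; the rest is a direct verification. I do not need nilpotency of $Q$ on all of $\mathbb V$, nor any property of $Q$ beyond $Q\in\frakg$, so I expect no real obstacle.
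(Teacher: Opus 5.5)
Your proof is correct, and the flow you obtain, $x\exp(s(t)Q)$ with $\dot s=\tr(xP)+\tr(xQP)\,s$, is exactly the closed form $e^{tW_3}(x)=x\exp(\tau(t,x)Q)$ that the paper also records. The paper's main justification is instead the criterion \cite[Proposition~2.3]{TV06} that $\phi V$ is complete when $V$ is complete and $V^2\phi=0$, checked via $L_Q^2\phi_P(x)=\tr(xQ^2P)=0$; this uses the same identity $Q^2P=0$ that truncates your exponential series, and your direct verification has the advantage of being self-contained.
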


\begin{proof}
It is known~\cite[Proposition~2.3]{TV06} that if $V$ is a complete holomorphic vector field and $\phi$ is a holomorphic function, then $\phi V$ is complete if and only if $V^2\phi=0$. 
In our case, $L_Q$ is $\C$-complete and $\phi_P$ is holomorphic, so it suffices to show that $L_Q^2\phi_P=0$.
Since $\operatorname{im}P=\ker Q^2$, we have that $Q^2P=0$ and hence,
$$
(L_Q^2\phi_P)(x)=\tr(xQ^2P)=0,
$$
for any $x\in G$.
In fact, we can express the flow explicitly as
$$
e^{tW_3}(x)=x\exp\bigl(\tau(t,x)Q\bigr),
$$
where
$$
\tau(t,x)=
\begin{cases}
\displaystyle\frac{e^{t\tr(xQP)}-1}{\tr(xQP)}\tr(xP) &\mbox{if }\tr(xQP)\neq0,\\[5pt]
t\tr(xP) &\mbox{if }\tr(xQP)=0.
\end{cases}
$$
Note that $\tau$ is holomorphic in $(t,x)$ and is well defined for any $x\in G$ and any $t\in\C$.
\end{proof}

\begin{remark}\label{rem:character}
If there were an element $Z=\sum_{\ell=1}^kZ_\ell\in\frakg$, with nonzero $Z_\ell\in\frakg_\ell$, such that~$Z$ acts as a square-zero operator on $\mathbb{V}$, 
then $W_3 = \phi_I L_Z$ would be a $\C$-complete vector field. Moreover, the same arguments in~Subsection~\ref{ssec:realcase} could be used to show that $\Phi\frakg_L \subseteq \operatorname{Lie}\{W_1,W_2,W_3\}$. Unfortunately, such an element $Z$ does not exist for the choice of our representation~\eqref{eq:defV}.  
To wit, note that $Z$ acts on the summand~$\frakg_\ell$ as $\operatorname{ad}_{Z_\ell}$ and that $\operatorname{ad}_{Z_\ell}^2\neq0$ by the Jacobson-Morozov theorem~\cite[Chapter~VIII, Section~11.2, Proposition~2]{BourbakiLie79}. 
On the positive side, there may exist another faithful representation $\mathbb{V}'$ of~$G$ and a desired square-zero operator~$Z\in \frakg$. For example, let $G$ be one of the following simple Lie groups: $\mathrm{SL}_n(\C)$ with $n\geq2$, $\mathrm{SO}_n(\C)$ with $n\geq5$, and $\mathrm{Sp}_{2n}(\C)$ with $n\geq1$ and let $\mathbb{V}'$ be the standard representation of~$G$. It has been shown in~\cite{CofDComplex} that a highest root vector $Z$ squares to zero. In the Appendix (see Proposition~\ref{prop:square-zero}) of this paper, we characterize the connected, complex simple groups that admit faithful representations $\mathbb{V}'$ with desired elements~$Z$ that square to zero on $\mathbb{V}'$.
\end{remark}

The main result of the subsection is the following: 

\begin{theorem}\label{thm:degree-one}
It holds that $\Phi\frakg_L\subseteq\operatorname{Lie}\{W_1,W_2,W_3\}$.
\end{theorem}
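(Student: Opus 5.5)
The plan is to mimic the proof of Theorem~\ref{thm:degreeone}, with a more careful module-theoretic argument replacing the ideal argument. Let $S:=\mathcal{U}(\frakg_L\oplus\frakg_R)W_3$. By Lemma~\ref{lem:firsttwo} it suffices to show $\Phi\frakg_L\subseteq S$.

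\textbf{Step 1: translate into a module problem.} By Lemma~\ref{lem:trace-coefficient-cyclic}, the map $B\otimes X\mapsto \phi_B L_X$ identifies $\mathcal{A}(\frakg)\otimes\frakg$ with $\Phi\frakg_L$. Since $[R_Y,\phi_B L_X]=\phi_{BY}L_X$, the action of $\frakg_R$ becomes right multiplication on the first factor. Since
$$[L_Y,\phi_B L_X]=\phi_{YB}L_X+\phi_B L_{[Y,X]},$$
the action of $\frakg_L$ becomes the diagonal action: left multiplication on $\mathcal{A}(\frakg)$ tensored with the adjoint action on $\frakg$. Thus $S\cap\Phi\frakg_L$ contains the $\frakg\oplus\frakg$-submodule $M$ generated by $P\otimes Q$, and we must show $M=\mathcal{A}(\frakg)\otimes\frakg$.

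Using~\eqref{eq:decomp}, write $\mathcal{A}(\frakg)=\bigoplus_i \mathbb{V}_i\otimes\mathbb{V}_i^*$, where $\mathbb{V}_i$ runs over the summands of~\eqref{eq:defV}. Left multiplication acts on $\mathbb{V}_i$ and right multiplication on $\mathbb{V}_i^*$. Because the $\mathbb{V}_i^*$ are irreducible and pairwise nonisomorphic for $\frakg_R$, the submodule $M$ splits as $\bigoplus_i M_i\otimes\mathbb{V}_i^*$, with $M_i\subseteq\mathbb{V}_i\otimes\frakg$ a diagonal $\frakg$-submodule. The right action applied to the $i$-th component $P_i\otimes Q$ of $P\otimes Q$ produces all $u\otimes Q$ with $u\in\operatorname{im}P_i=\mathbb{U}_i:=\ker Q^2|_{\mathbb{V}_i}$. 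So it suffices to show, for each $i$,
$$\mathcal{U}(\frakg)(\mathbb{U}_i\otimes Q)=\mathbb{V}_i\otimes\frakg.$$

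\textbf{Step 2: a reduction lemma.} Let $J_i:=\{v\in\mathbb{V}_i\mid v\otimes\frakg\subseteq M_i\}$. If $v\in J_i$, then for any $X,Y\in\frakg$ we have $Xv\otimes Y=X(v\otimes Y)-v\otimes[X,Y]\in M_i$. Hence $J_i$ is a $\frakg$-submodule of the irreducible $\mathbb{V}_i$, so it suffices to produce one nonzero element of $J_i$.

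Similarly, if $V'\subseteq\mathbb{V}_i$ is a $\frakg$-stable subspace with $V'\otimes Q\subseteq M_i$, then $v\otimes[X,Q]=X(v\otimes Q)-Xv\otimes Q\in M_i$ for $v\in V'$. Iterating gives $V'\otimes\mathcal{U}(\frakg)Q\subseteq M_i$. Using the weights of $\frakh_\ell$ to separate the components $Q_\ell$, one gets $\mathcal{U}(\frakg)Q=\frakg$.

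\textbf{Step 3: minuscule summands.} For $\mathbb{V}_i=\mathbb{V}^\lambda$ with $\lambda$ minuscule in every factor, all weights lie in a single Weyl orbit. The $\mu_\ell$-strings through weights therefore have length at most $2$, so $Q^2=0$ on $\mathbb{V}^\lambda$. This gives $\mathbb{U}_i=\mathbb{V}_i$, and Step~2 finishes.

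\textbf{Step 4: adjoint summands (main obstacle).} For $\mathbb{V}_i=\frakg_j$, $Q$ acts by $\operatorname{ad}_{Q_j}$, and $\mathbb{U}_i$ is the span of $\frakh_j$ and all root spaces except $\frakg_{-\mu_j}$. Indeed, $\operatorname{ad}_{Q_j}^2$ vanishes elsewhere, since $\beta+2\mu_j$ is never a root or zero for $\beta\neq-\mu_j$. So $\mathbb{U}_i$ has codimension one and is not $\frakg$-stable, and Step~2 does not apply directly.

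I would first find a nonzero element of $J_i$ by a weight argument. Let $\mathbb{U}'_i:=\{u\in\mathbb{U}_i\mid \frakg u\subseteq\mathbb{U}_i\}$, which contains every weight vector whose weight $\beta$ satisfies $\beta+\alpha\neq-\mu_j$ for all roots $\alpha$; for instance, it contains $Q_j$. For $u\in\mathbb{U}'_i$ we get $u\otimes[X,Q]\in M_i$, so one can climb from $Q$ along lowering operators while the left factor stays in $\mathbb{U}_i$.

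Concretely, I would show $Q_j\otimes Y\in M_i$ for every root vector $Y$. This would go by induction downward on the height of the weight of $Y$: at each step, the correction terms $[X,Q_j]\otimes Y'$ that arise have left factor of weight $\mu_j+\alpha$, which is far from $-\mu_j$. They would be handled by a simultaneous induction on (weight of left factor, weight of right factor), exploiting that only the single weight $-\mu_j$ is excluded from $\mathbb{U}_i$. Once $Q_j\in J_i$, Step~2 gives $M_i=\frakg_j\otimes\frakg$. This completes the proof that $\Phi\frakg_L\subseteq S\subseteq\operatorname{Lie}\{W_1,W_2,W_3\}$.
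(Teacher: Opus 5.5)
Your Steps 1 and 2 are sound; in fact $S=M$ exactly. The real problem is that the target you reduce to, $\Phi\frakg_L\subseteq S$, is false, and Step~4 is where this shows up.

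\textbf{Step 4 (adjoint summands).} Take an adjoint summand $\mathbb{V}_i=\frakg_j$. On $\frakg_j\otimes\frakg$ put $\beta(u\otimes X):=B(u,X_j)$, where $X_j$ is the $\frakg_j$-component of $X$ and $B$ is the Killing form of $\frakg_j$. This functional is $\frakg$-invariant: the ideals $\frakg_m$, $m\neq j$, change neither $u$ nor $X_j$, and $B$ is $\operatorname{ad}$-invariant. It vanishes on your generators $\mathbb{U}_i\otimes Q$, because $B(u,Q_j)=0$ for $u$ in $\frakh_j$ or in any root space other than $\frakg_{-\mu_j}$. Hence $M_i\subseteq\ker\beta\subsetneq\frakg_j\otimes\frakg$.

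Two consequences follow:
\begin{itemize}
\item $Q_j\otimes Q_{-\mu_j}\notin M_i$, so your claim that $Q_j\otimes Y\in M_i$ for every root vector $Y$ fails.
\item $J_i$ is a proper submodule of the irreducible module $\frakg_j$, so $J_i=0$. The nonzero element you want does not exist, and no weight induction can produce it.
\end{itemize}

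The missing direction is exactly $\frakg_R$. Take $B$-dual bases $\{X_r\},\{X^r\}$ of $\frakg_j$ and set $C_0=\sum_rX_r\otimes X^r$. The element $C_0\otimes B(Y,\cdot)$ corresponds to the field $x\mapsto\sum_rB(Y,\operatorname{Ad}(x)X_r)\,xX^r=Yx$, that is, to $R_Y$. Since $\beta(C_0)=\dim\frakg_j\neq0$, we get $R_Y\notin S$ for every nonzero $Y\in\frakg_j$.

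The idea you are missing is that Lemma~\ref{lem:firsttwo} gives you $\frakg_R$ as well as $\frakg_L$, so you only need $\Phi\frakg_L\subseteq S+\frakg_R$. That is the paper's route. It proves $\mathcal{U}(\frakg_j)(\mathbb{U}_i\otimes Q_j)=\ker B$: the quotient of $\frakg_j\otimes\frakg_j$ is generated by the class of $Q_{-\mu_j}\otimes Q_j$, which is a highest-weight vector of weight zero, so the quotient is one-dimensional. It then supplies the complementary Casimir direction with the fields $R_Y$.

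\textbf{Step 3 (minuscule summands).} This step is also wrong once $\frakg$ has at least two simple factors. Minusculeness gives $Q_\ell^2=0$ on each $\mathbb{V}^{\lambda_\ell}$. But $Q=\sum_\ell Q_\ell$ acts on $\mathbb{V}^\lambda=\bigotimes_\ell\mathbb{V}^{\lambda_\ell}$ as a sum of commuting operators, so $Q^2$ contains cross terms $2Q_\ell Q_m$, which are nonzero whenever $\lambda_\ell,\lambda_m\neq0$. For example, take $G=\mathrm{SL}_2(\C)\times\mathrm{SL}_2(\C)$ and the central character nontrivial on both factors. With $E$ the raising operator, $Q^2=2E\otimes E\neq0$ on $\C^2\otimes\C^2$. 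So $\mathbb{U}_i\neq\mathbb{V}_i$, and Step~2 does not apply the way you use it.

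The conclusion $M_i=\mathbb{V}_i\otimes\frakg$ is still true here, but it needs the paper's refinement:
\begin{enumerate}
\item Work with $\mathbb{U}^\lambda_\ell=\mathbb{U}^{\lambda_1}\otimes\cdots\otimes\mathbb{V}^{\lambda_\ell}\otimes\cdots\otimes\mathbb{U}^{\lambda_k}$, where $\mathbb{U}^{\lambda_m}=\ker(Q_m|_{\mathbb{V}^{\lambda_m}})$. This space lies in $\ker Q^2$ but is only $\frakg_\ell$-stable.
\item Run the ideal argument inside $\frakg_\ell$ to get $\mathbb{U}^\lambda_\ell\otimes\frakg_\ell\subseteq M_i$.
\item Enlarge the left factor to $\mathbb{V}^\lambda$ using $\frakg_{-\ell}$, which acts trivially on the right factor $\frakg_\ell$.
\end{enumerate}
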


We still use $S$ to denote the space $\mathcal U(\frakg_L\oplus\frakg_R)W_3$. We show below that $\Phi\frakg_L \subseteq S + \frakg_R$.  
Let $\Phi^\lambda$ and $\Phi^{\ell}$ be spaces of the matrix-coefficients for $\mathbb V^\lambda$ and $\frakg_\ell$, respectively. 
Then, by Lemma~\ref{lem:trace-coefficient-cyclic} and~\eqref{eq:decomp},
$$
\Phi=\bigoplus_{\ell=1}^k\Phi^{\ell}\oplus\bigoplus_{\lambda\in \Lambda^*}\Phi^\lambda.
$$
We show below in Proposition~\ref{prop:minuscule} that $\Phi^\lambda \frakg_L\subseteq S$ for all $\lambda\in \Lambda^*$ and in Proposition~\ref{prop:adjoint} that $\Phi^{\ell}\frakg_L\subseteq S + (\frakg_\ell)_R$ for all $\ell = 1,\ldots, k$. 

For a given subspace $\mathbb{U}'\subseteq \mathbb{V}$, let
$$
\Phi_{\mathbb{U}'}:= \{\phi_A \mid A\in \mathcal{A}(\frakg) \mbox{ and } \operatorname{im}(A)\subseteq \mathbb{U}'\}.
$$
Note, in particular, that if $\mathbb{U}'$ is a subspace of an irreducible summand $\mathbb{V}'$ of $\mathbb{V}$, then 
the space of $A\in \mathcal{A}(\frakg)$ with $\operatorname{im}(A)\subseteq \mathbb{U}'$ is spanned by $uv^*$ with $u\in \mathbb{U}'$ and $v^*\in \mathbb{V}'^*$ and hence, $\Phi_{\mathbb{U}'}$ is spanned by the elements $\phi_{uv^*}$.  
We start with the following lemma:

\begin{lemma}\label{lem:coefficients}
It holds that $\mathcal U(\frakg_R)\phi_P=\Phi_{\mathbb U}$. In particular, $\mathcal{U}(\frakg_R)W_3 = \Phi_{\mathbb U}L_Q\subseteq S$.
\end{lemma}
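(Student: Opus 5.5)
We need to show $\mathcal U(\frakg_R)\phi_P=\Phi_{\mathbb U}$, and then deduce the statement about $W_3$.

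\emph{Translating to matrices.} Since $R_X\phi_A=\phi_{AX}$, we have $\mathcal U(\frakg_R)\phi_P=\{\phi_{PB}\mid B\in\mathcal A(\frakg)\}$. The elements of $\mathcal U(\frakg_R)$ act by products of elements of $\frakg$, and these products together with the identity span $\mathcal A(\frakg)$. Because $A\mapsto \phi_A$ is injective on $\mathcal A(\frakg)$ (Lemma~\ref{lem:trace-coefficient-cyclic}), it suffices to prove
\[
P\mathcal A(\frakg)=\{A\in\mathcal A(\frakg)\mid \operatorname{im}A\subseteq\mathbb U\}.
\]

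\emph{Proving the matrix identity.} The inclusion ``$\subseteq$'' is immediate: $\operatorname{im}(PB)\subseteq\operatorname{im}P=\mathbb U$, and $PB\in\mathcal A(\frakg)$ because $P\in\mathcal A(\frakg)$. For ``$\supseteq$'', let $A\in\mathcal A(\frakg)$ with $\operatorname{im}A\subseteq\mathbb U$. Since $P$ is a projection onto $\mathbb U$, it acts as the identity on $\mathbb U$, so $PA=A$. Hence $A=P\cdot A\in P\mathcal A(\frakg)$. This is straightforward. The only point to check is that $P$ is an idempotent lying in $\mathcal A(\frakg)$ with image exactly $\mathbb U$, and this is how $P$ was chosen using~\eqref{eq:decomp}. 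Concretely, one picks a projection onto $\ker(Q^2|_{\mathbb V'})$ inside each block $\operatorname{End}(\mathbb V')$, which is possible because each $\operatorname{End}(\mathbb V')$ is a full matrix algebra. No idempotence beyond $P|_{\mathbb U}=\operatorname{id}$ is needed.

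\emph{Deducing the statement about $W_3$.} The left- and right-invariant fields commute, $[R_X,L_Q]=0$, so
\[
[R_X,\phi L_Q]=(R_X\phi)L_Q .
\]
Iterating, $\mathcal U(\frakg_R)W_3=(\mathcal U(\frakg_R)\phi_P)L_Q=\Phi_{\mathbb U}L_Q$. This space is contained in $S$ by the definition of $S$.

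There is no real obstacle here. The one subtlety is making sure that the right action yields right multiplication $PB$, rather than $BP$, so that the image constraint $\operatorname{im}\subseteq\mathbb U$ is what gets characterized.
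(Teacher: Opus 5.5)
Your proposal is correct and follows the paper's proof essentially step for step. You use $R_X\phi_A=\phi_{AX}$ to reduce to the matrix identity $P\mathcal A(\frakg)=\{A\in\mathcal A(\frakg)\mid \operatorname{im}A\subseteq\mathbb U\}$, which holds because $P\in\mathcal A(\frakg)$ is a projection onto $\mathbb U$, and then use $[\frakg_L,\frakg_R]=0$ to pass to $W_3$. (One small remark, which applies equally to the paper's write-up: injectivity of $A\mapsto\phi_A$ is not needed for this reduction, since equal sets of matrices automatically have equal images.)
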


\begin{proof}
Note that $\mathcal{U}(\frakg_R)\phi_P = \{\phi_{PB} \mid B\in \mathcal{A}(\frakg)\}$. Since $P\in \mathcal{A}(\frakg)$ and since $A\mapsto \phi_A$ is a linear isomorphism from $\mathcal{A}(\frakg)$ to $\Phi$ (by Lemma~\ref{lem:trace-coefficient-cyclic}), $\mathcal U(\frakg_R)\phi_P=\{\phi_{A}\mid A\in P\mathcal A(\frakg)\}$. 
Since $P\in\mathcal A(\frakg)$ is a projection with $\operatorname{im}P=\mathbb U$, we have that $P\mathcal A(\frakg)=\{A\in\mathcal A(\frakg)\mid\operatorname{im}A\subseteq\mathbb U\}$ and hence, $\mathcal U(\frakg_R)\phi_P=\Phi_{\mathbb U}$.
Finally, since $[\frakg_L,\frakg_R]=0$, $\mathcal U(\frakg_R)W_3=(\mathcal U(\frakg_R)\phi_P)L_Q=\Phi_{\mathbb U}L_Q$, which is contained in~$S$.
\end{proof}

We next establish the following result:

\begin{proposition}\label{prop:minuscule}
For any  $\lambda \in \Lambda^*$, $\Phi^\lambda\frakg_L\subseteq S$.
\end{proposition}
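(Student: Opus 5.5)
Fix $\lambda\in\Lambda^*$ and write $\mathbb{V}^\lambda=\mathbb{V}^{\lambda_1}\otimes\cdots\otimes\mathbb{V}^{\lambda_k}$. Set $\mathbb{U}^\lambda:=\mathbb{U}\cap\mathbb{V}^\lambda=\ker(Q^2|_{\mathbb{V}^\lambda})$. By the description of $\Phi_{\mathbb{U}'}$ for subspaces of an irreducible summand, Lemma~\ref{lem:coefficients} gives $\Phi_{\mathbb{U}^\lambda}L_Q\subseteq S$. The plan mirrors the proof of Theorem~\ref{thm:degreeone}, with $\Phi$ replaced by $\Phi^\lambda$ throughout, and has two steps: first upgrade $\Phi_{\mathbb{U}^\lambda}L_Q$ to $\Phi^\lambda L_Q$, then run the ideal argument.

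\textbf{Step 1: $\Phi^\lambda L_Q\subseteq S$.} I would use the $\frakg_R$-action on $\Phi^\lambda$ only through the column index: $R_X\phi_{uv^*}=\phi_{uv^*X}$, which changes $v^*$ but keeps $u$. I also use the $\frakg_L$-action, which moves $u$. The relation $[L_X,\phi L_Q]=(L_X\phi)L_Q+\phi L_{[X,Q]}$ lets $u$ move, at the cost of changing $Q$. To avoid changing $Q$, apply only $L_Y$ with $[Y,Q]=0$. For instance, $Y\in\frakg_{\mu_\ell}$ commutes with $Q$ because $\mu_\ell$ is the highest root, and $Y=H$ with $\mu_\ell(H)=0$ for all $\ell$ also works. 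These operators preserve $\mathbb{U}^\lambda$, so this alone does not enlarge the space.

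The key input is therefore that $\lambda$ is minuscule. For $\lambda_\ell$ minuscule, $(\nu,\mu_\ell^\vee)\in\{-1,0,1\}$ for every weight $\nu$ of $\mathbb{V}^{\lambda_\ell}$, so $Q_\ell^2=0$ on $\mathbb{V}^{\lambda_\ell}$. Hence $Q$ acts on the tensor product as $\sum_\ell Q_\ell$ with commuting square-zero pieces. I would then exhibit $\frakg_L$-brackets that map $\Phi_{\mathbb{U}^\lambda}L_Q$ onto enough $\phi L_{Q'}$. In the simple case $k=1$ we have $\mathbb{U}^\lambda=\mathbb{V}^\lambda$ and Step 1 is immediate. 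For general $k$, I would work with partial sums $Q_J=\sum_{\ell\in J}Q_\ell$. Bracketing with $L_{X}$, $X\in\frakg_{-\mu_\ell}$, produces $\phi L_{Q}$ plus a term $\phi L_{H_{\mu_\ell}}$-type. Iterating, I would aim to show $\Phi^\lambda L_{Q_\ell}\subseteq S$ for some nonzero vectors.

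\textbf{Step 2: ideal argument.} Define $\mathfrak{i}:=\{X\in\frakg\mid \Phi^\lambda L_X\subseteq S\}$. Since $\Phi^\lambda$ is $\frakg_L$-stable, the identity $[L_Y,\phi L_X]=(L_Y\phi)L_X+\phi L_{[Y,X]}$ shows, exactly as in the proof of Theorem~\ref{thm:degreeone}, that $\mathfrak{i}$ is an ideal. Once $\mathfrak{i}$ contains an element with nonzero component in every $\frakg_\ell$ — $Q$ itself by Step 1 — we get $\mathfrak{i}=\frakg$, and hence $\Phi^\lambda\frakg_L\subseteq S$.

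\textbf{Main obstacle.} I expect Step 1 to be the main obstacle, since $\ker Q^2$ on a tensor product is a proper subspace when $k\ge 2$. What I would try first is to express $\phi_{uv^*}L_Q$, for $u$ outside $\mathbb{U}^\lambda$, through brackets with $L_{X}$, $X\in\frakg_{-\mu_\ell}$, applied to elements $\phi_{u'v^*}L_Q$ with $u'\in\mathbb{U}^\lambda$. The leftover terms would then be absorbed using Theorem~\ref{thm:liealgebra}-style weight computations together with $Q_\ell^2=0$.
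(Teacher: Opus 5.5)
Your Step~2 is correct, and your argument is complete when $k=1$, since then $\ker Q^2\cap\mathbb{V}^\lambda=\mathbb{V}^\lambda$. For $k\ge 2$, however, Step~1 carries the entire content of the proposition, and you give only an intention, not an argument. The obstruction you noticed is genuine. Take $\phi=\phi_{uv^*}$ with $u\in\ker Q^2$ and $Y\in\frakg_{-\mu_\ell}$. The vector $Yu$ generally leaves $\ker Q^2$; for example, $u=e\otimes f$ with $e$ a highest weight vector of $\mathbb{V}^{\lambda_1}$ and $Q_2f\neq 0$. So $(L_Y\phi)L_Q$ is not known to lie in $S$, and the identity $[L_Y,\phi L_Q]=(L_Y\phi)L_Q+\phi L_{[Y,Q_\ell]}$ yields nothing. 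Your ideal $\mathfrak{i}$ requires the single field $L_Q$ together with the full, $\frakg_L$-stable coefficient space $\Phi^\lambda$. That requirement is what makes Step~1 as hard as the statement itself, and ``absorbing leftover terms by weight computations'' does not address it.

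The missing idea is to decouple the simple factors and work with a smaller, $\frakg_\ell$-stable piece of $\ker Q^2$. Let $\mathbb{U}^{\lambda_j}:=\ker(Q_j|_{\mathbb{V}^{\lambda_j}})\neq 0$ and $\mathbb{U}^\lambda_\ell:=\mathbb{U}^{\lambda_1}\otimes\cdots\otimes\mathbb{V}^{\lambda_\ell}\otimes\cdots\otimes\mathbb{U}^{\lambda_k}$.
\begin{itemize}
\item On $\mathbb{U}^\lambda_\ell$ the operator $Q$ acts as $Q_\ell$ alone. Hence $Q^2=Q_\ell^2=0$ there, and $\Phi_{\mathbb{U}^\lambda_\ell}L_Q\subseteq S$ by Lemma~\ref{lem:coefficients}.
\item Unlike $\ker Q^2$, this subspace is stable under all of $\frakg_\ell$. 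So for $Y\in\frakg_\ell$ and $\phi\in\Phi_{\mathbb{U}^\lambda_\ell}$ you have $(L_Y\phi)L_Q\in S$. The term $\phi L_{[Y,Q_\ell]}$ that you regarded as a leftover is therefore exactly what lands in $S$.
\item Run the ideal argument inside $\frakg_\ell$ only, with the $(\frakg_\ell)_L$-stable coefficient space $\Phi_{\mathbb{U}^\lambda_\ell}$. The ideal contains $[\frakg_\ell,Q_\ell]\neq 0$, which gives $\Phi_{\mathbb{U}^\lambda_\ell}(\frakg_\ell)_L\subseteq S$.
\item For $Y\in\frakg_{-\ell}:=\bigoplus_{j\neq\ell}\frakg_j$ and $X\in\frakg_\ell$ one has $[L_Y,\phi L_X]=(L_Y\phi)L_X$. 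Also $\mathcal{U}(\frakg_{-\ell})\mathbb{U}^\lambda_\ell=\mathbb{V}^\lambda$, because each $\mathbb{U}^{\lambda_j}$ is a nonzero subspace of the irreducible $\mathbb{V}^{\lambda_j}$.
\item Brackets with $(\frakg_{-\ell})_L$ therefore spread the coefficients to all of $\Phi^\lambda$, giving $\Phi^\lambda(\frakg_\ell)_L\subseteq S$ for every $\ell$.
\end{itemize}
This route never passes through $\Phi^\lambda L_Q\subseteq S$.
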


\begin{proof}
We write $\lambda = (\lambda_1,\ldots,\lambda_k)$. 
Since $\lambda_j$ is minuscule, the root vector $Q_j\in \frakg_j$ squares to zero on $\mathbb{V}^{\lambda_j}$ (this is trivial if $\lambda_j=0$; otherwise, see, e.g.,~\cite[Chapter~VIII, Section~7.3, Proposition~7]{BourbakiLie79}).
Let 
$\mathbb{U}^{\lambda_j}:= \ker(Q_j|_{\mathbb{V}^{\lambda_j}})$, which is nonzero since $Q^2_j |_{\mathbb{V}^{\lambda_j}} = 0$. For each $\ell = 1,\ldots, k$, let
$$
\mathbb{U}^\lambda_\ell:= \mathbb{U}^{\lambda_1}\otimes\cdots\otimes \mathbb{U}^{\lambda_{\ell-1}}\otimes\mathbb V^{\lambda_\ell}
\otimes \mathbb{U}^{\lambda_{\ell+1}}\otimes\cdots\otimes \mathbb{U}^{\lambda_k}.
$$
Note that $Q^2 u = 0$ for all $u\in \mathbb{U}^\lambda_\ell$, so $\mathbb{U}^\lambda_\ell \subseteq \mathbb{U}$. Thus, by Lemma~\ref{lem:coefficients}, 
$$
\Phi_{\mathbb{U}^\lambda_\ell}L_Q\subseteq \Phi_{\mathbb{U}} L_Q\subseteq S.
$$

Since $\mathbb{U}^\lambda_\ell$ is a subspace of the irreducible summand $\mathbb{V}^\lambda$ of $\mathbb{V}$, 
$\Phi_{\mathbb{U}^\lambda_\ell}$ is spanned by $\phi_{uv^*}$ for $u\in \mathbb{U}^\lambda_\ell$ and $v^*\in (\mathbb{V}^\lambda)^*$. 
Note that $L_Y\phi_{uv^*} = \phi_{(Yu)v^*}$. 
Since the $\ell$th tensor factor of $\mathbb U^\lambda_\ell$ is $\mathbb V^{\lambda_\ell}$, $Yu\in\mathbb U^\lambda_\ell$ for any $Y\in\frakg_\ell$ and any $u\in \mathbb{U}^\lambda_\ell$. 
It follows that for any $Y\in\frakg_\ell$ and any $\phi\in \Phi_{\mathbb{U}^\lambda_\ell}$, $(L_Y\phi)L_Q\in S$. 
Also, since $S$ is closed under $\frakg_L$, $[L_Y, \phi L_Q]\in S$. Further, note that $[Y, Q] =[Y,Q_\ell]$, so 
$$
\phi L_{[Y,Q_\ell]}=[L_Y,\phi L_Q]-(L_Y\phi)L_Q\in S. 
$$
Consider the set
$$
\mathfrak i_\ell:=\{X\in\frakg_\ell\mid \Phi_{\mathbb U^\lambda_\ell} L_X\subseteq S\},
$$
which is an ideal of $\frakg_\ell$ (one can obtain this fact by using the same arguments in the proof of Theorem~\ref{thm:degreeone}).
Moreover, $\mathfrak{i}_\ell$ is nonzero because it contains $[\frakg_\ell, Q_\ell]$. Thus, $\mathfrak{i}_\ell = \frakg_\ell$ and hence, 
$$
\Phi_{\mathbb{U}^\lambda_\ell} (\frakg_\ell)_L \subseteq S.
$$

For convenience, let $\frakg_{-\ell} := \oplus_{j = 1, j\neq\ell}^k \frakg_j$.  
Since $\mathbb{V}^{\lambda_j}$ is an irreducible representation of $\frakg_j$ and since $\mathbb{U}^{\lambda_j}$ is nonzero, $\mathcal{U}(\frakg_j)\mathbb{U}^{\lambda_j}= \mathbb{V}^{\lambda_j}$ which implies that $\mathcal{U}(\frakg_{-\ell}) \mathbb{U}^\lambda_\ell = \mathbb{V}^\lambda$ and hence,  
$\mathcal{U}((\frakg_{-\ell})_L)\Phi_{\mathbb{U}^\lambda_\ell} = \Phi^\lambda$. 
Since $[\frakg_{-\ell},\frakg_\ell] =0$, we have that $[L_Y,\phi L_X]=(L_Y\phi)L_X$ for any $Y\in\frakg_{-\ell}$, $X\in\frakg_\ell$, and $\phi\in\Phi$, and hence,
$$
\Phi^\lambda (\frakg_\ell)_L=(\mathcal{U}((\frakg_{-\ell})_L)\Phi_{\mathbb{U}^\lambda_\ell}) (\frakg_{\ell})_L=\mathcal{U}((\frakg_{-\ell})_L)(\Phi_{\mathbb{U}^\lambda_\ell} (\frakg_\ell)_L) \subseteq S,
$$
where the inclusion holds because $\Phi_{\mathbb{U}^\lambda_\ell} (\frakg_\ell)_L\subseteq S$ and $S$ is closed under $\frakg_L$. 
Since the above holds for all $\ell = 1,\ldots, k$, we thus conclude that $\Phi^\lambda \frakg_L \subseteq S$. 
\end{proof}

For the remainder of the subsection, we show that $\Phi^\ell \frakg_L \subseteq S + (\frakg_\ell)_R$. 
Let~$\mathfrak{s}$ be an arbitrary simple Lie algebra (over $\C$).  
Let~$B$ be the associated Killing form. We also use $B$ to denote the linear map $B:\mathfrak s\otimes\mathfrak s\to\C$ determined by the rule $B(X_1\otimes X_2):=B(X_1,X_2)$. 
Let $\{X_i\}$ and $\{X^i\}$ be bases of~$\mathfrak s$ that are dual to each other with respect to $B$, i.e., $B(X_i,X^j)=\delta_{ij}$, and $C_0:=\sum_iX_i\otimes X^i$, which is the Casimir element.
It is known that $B(C_0)=\dim\mathfrak s\neq0$, so $\mathfrak s\otimes\mathfrak s=\ker B\oplus\C C_0$.

We choose a Cartan subalgebra of $\mathfrak{s}$ and a positive root system. Let $\mu$ be the highest root of~$\mathfrak s$, $Q_\mu\in \mathfrak{s}_\mu$ be a nonzero root vector, and $\mathfrak u:=\ker\operatorname{ad}_{Q_\mu}^2$. 
Let $\mathfrak{s}$ act on $\mathfrak{s}\otimes \frak{s}$ 
as the tensor of two adjoint representations, i.e., $Y(X_1 \otimes X_2) = [Y,X_1]\otimes X_2 + X_1 \otimes [Y, X_2]$. 
We need the following lemma:

\begin{lemma}\label{lem:adjoint-tensor}
It holds that
$\mathcal U(\mathfrak s)(\mathfrak u\otimes Q_\mu)=\ker B$. 
\end{lemma}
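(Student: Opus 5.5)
The plan is to prove the two inclusions separately. The inclusion $\subseteq$ follows from a weight computation together with invariance of $B$. The inclusion $\supseteq$ follows from a duality argument with respect to an invariant form on $\mathfrak s\otimes\mathfrak s$, which avoids any analysis of multiplicities in $\mathfrak s\otimes\mathfrak s$.

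\emph{Step 1: describe $\mathfrak u$.} The operator $\operatorname{ad}_{Q_\mu}$ raises weights by $\mu$, so $\operatorname{ad}^2_{Q_\mu}$ maps $\mathfrak s_\gamma$ into $\mathfrak s_{\gamma+2\mu}$. Since $\mu$ is the highest root, $\gamma+2\mu$ is a root only when $\gamma=-\mu$, and it is never $0$. On $\mathfrak s_{-\mu}$ we have $\operatorname{ad}_{Q_\mu}^2 X_{-\mu}=[Q_\mu,[Q_\mu,X_{-\mu}]]$, which is a nonzero multiple of $Q_\mu$. Hence
$$\mathfrak u=\frakh\oplus\bigoplus_{\alpha\neq-\mu}\mathfrak s_\alpha .$$
Because $B$ pairs $\mathfrak s_\alpha$ only with $\mathfrak s_{-\alpha}$ and $\frakh$ only with $\frakh$, we get $B(\mathfrak u,Q_\mu)=0$. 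Thus $\mathfrak u\otimes Q_\mu\subseteq\ker B$. Invariance of $B$ gives $B(Y\cdot T)=0$ for all $Y\in\mathfrak s$ and $T\in\mathfrak s\otimes\mathfrak s$, so $\ker B$ is an $\mathfrak s$-submodule. Therefore $M:=\mathcal U(\mathfrak s)(\mathfrak u\otimes Q_\mu)\subseteq\ker B$.

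\emph{Step 2: reduce $\supseteq$ to a statement about $M^\perp$.} Put on $\mathfrak s\otimes\mathfrak s$ the nondegenerate symmetric invariant form
$$\langle X_1\otimes X_2,Y_1\otimes Y_2\rangle:=B(X_1,Y_1)B(X_2,Y_2).$$
A direct computation gives $\langle T,C_0\rangle=B(T)$, so $C_0^\perp=\ker B$. Since $M$ is a submodule and the form is invariant, $M^\perp$ is a submodule, invariant under the connected adjoint group $\mathrm{Int}(\mathfrak s)$. By nondegeneracy, $M=(M^\perp)^\perp$. It therefore suffices to show $M^\perp\subseteq\C C_0$, since then $M\supseteq C_0^\perp=\ker B$.

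\emph{Step 3: identify elements of $M^\perp$.} For $T=\sum_i Y_i\otimes Z_i$, define $\hat T\in\operatorname{End}(\mathfrak s)$ by $\hat T(A):=\sum_i B(Z_i,A)Y_i$. Under this identification $C_0\mapsto\operatorname{id}$, and $g\cdot T\mapsto \operatorname{Ad}_g\hat T\operatorname{Ad}_g^{-1}$.

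The orthogonality $\langle T,X\otimes Q_\mu\rangle=0$ for all $X\in\mathfrak u$ reads $B(\hat T(Q_\mu),\mathfrak u)=0$. By Step 1, $\mathfrak u^{\perp_B}=\mathfrak s_\mu=\C Q_\mu$, so $\hat T(Q_\mu)\in\C Q_\mu$. Applying the same conclusion to $g\cdot T\in M^\perp$ for every $g\in\mathrm{Int}(\mathfrak s)$ shows the following: every vector of the orbit $\mathcal O:=\mathrm{Int}(\mathfrak s)\cdot Q_\mu$ is an eigenvector of $\hat T$.

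\emph{Step 4: conclude $\hat T$ is scalar.} This is the main point to get right. The set $\mathcal O$ is connected and is the disjoint union of the finitely many relatively closed sets $\mathcal O\cap E_c$, where $E_c$ are the eigenspaces of $\hat T$. Hence $\mathcal O\subseteq E_c$ for a single eigenvalue $c$. The span of $\mathcal O$ is a nonzero ideal of the simple algebra $\mathfrak s$, so it equals $\mathfrak s$. Thus $\hat T=c\,\operatorname{id}$, i.e.\ $T=cC_0$. This gives $M^\perp\subseteq\C C_0$, and by Step 2 the lemma follows.
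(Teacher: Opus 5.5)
Your proof is correct, and for the reverse inclusion it takes a genuinely different route from the paper.

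\textbf{Step 1 (shared).} Both arguments begin the same way. The paper gets $\mathfrak u=\bigoplus_{j=-1}^{2}\mathfrak s(j)$ from the $\operatorname{ad}_{H_\mu}$-grading, using that $\mu$ is long. You instead use that every root $\gamma$ satisfies $-\mu\preceq\gamma\preceq\mu$ in the dominance order. That fact is what justifies your claim that $\gamma+2\mu$ is a root only if $\gamma=-\mu$.

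\textbf{The inclusion $\ker B\subseteq M$: the paper's route.} The paper works with the quotient module.
\begin{itemize}
\item Since $\mathfrak s=\mathfrak u\oplus\C Q_{-\mu}$ and $\mathcal U(\mathfrak s)(\mathfrak s\otimes Q_\mu)=\mathfrak s\otimes\mathfrak s$, the quotient $(\mathfrak s\otimes\mathfrak s)/M$ is generated by $q=[Q_{-\mu}\otimes Q_\mu]$.
\item It checks that $[X,Q_{-\mu}]\in\mathfrak u$ for every positive root vector $X$. Hence $q$ is a highest-weight vector of weight $0$.
\item So the quotient is trivial, $M$ has codimension at most one, and $M=\ker B$ follows.
\end{itemize}

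\textbf{Your route.} You dualize instead. Using the invariant form $B\otimes B$ and the identification $\mathfrak s\otimes\mathfrak s\cong\operatorname{End}(\mathfrak s)$, elements of $M^\perp$ become endomorphisms all of whose $\mathrm{Int}(\mathfrak s)$-conjugates preserve the line $\C Q_\mu$. A Schur-type argument then forces them to be scalar. It uses three facts:
\begin{itemize}
\item the orbit $\mathcal O$ is connected;
\item distinct eigenspaces meet only in $0\notin\mathcal O$;
\item the span of $\mathcal O$ is a nonzero ideal, hence all of $\mathfrak s$.
\end{itemize}

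\textbf{What each buys.} The paper's argument is purely infinitesimal and relies only on highest-weight theory. It therefore works verbatim over any algebraically closed field of characteristic zero. Yours avoids both the highest-weight-vector computation for $q$ and the structure theory of cyclic highest-weight modules. The price is invoking the adjoint group and a topological (or Zariski) connectedness argument. In return you identify $M^\perp=\C C_0$ explicitly. Your argument is also close in spirit to the orbit-spanning argument the paper uses in the Appendix for extremal elements.
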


\begin{proof}
Choose a root $\mathfrak{sl}_2$-triple $(Q_\mu,H_\mu,Q_{-\mu})$, with $[Q_\mu,Q_{-\mu}]=H_\mu$.
Note that $\mu$ is long (see, e.g.,~\cite[Section~10.4, Lemma~D]{HumphreysLie}).
It follows that for any root $\alpha$,  $\alpha(H_\mu) =(\alpha, \mu)$ can only take values $\pm 2$, $\pm 1$, and $0$. These  are the eigenvalues of $\operatorname{ad}_{H_\mu}$. Moreover, $\alpha(H_\mu) = 2$ (resp., $\alpha(H_\mu) = -2$) if and only if $\alpha = \mu$ (resp., $\alpha = -\mu$). 
Since each root space is one-dimensional, the eigenspaces of eigenvalues $-2$ and $2$ are $\C Q_{-\mu}$ and $\C Q_\mu$, respectively.
Let $\mathfrak s(j):=\{X\in\mathfrak s\mid \operatorname{ad}_{H_\mu}(X)=jX\}$ for $j\in\{\pm 2, \pm 1, 0\}$.
Since $\operatorname{ad}_{Q_\mu}^2$ maps $\mathfrak s(j)$ into $\mathfrak s(j+4)$, we obtain that $\mathfrak u=\bigoplus_{j=-1}^2\mathfrak s(j)$. Also, note that $\operatorname{ad}_{Q_\mu}^2 (Q_{-\mu}) = -2Q_\mu$. 
Thus, $\mathfrak s=\mathfrak u\oplus\C Q_{-\mu}$.
For any $X\in\mathfrak s(j)$, the 
$\operatorname{ad}(\mathfrak{s})$-invariance
of the Killing form gives
$$
0=B(\operatorname{ad}_{H_\mu}(X),Q_\mu)+B(X,\operatorname{ad}_{H_\mu}(Q_\mu))=(j+2)B(X,Q_\mu).
$$
Thus, $B(\mathfrak s(j),\mathfrak s(2))=0$ for all $j\neq-2$ and hence, $B(\mathfrak u,Q_\mu)=0$. Using again the 
$\operatorname{ad}(\mathfrak{s})$-invariance 
of $B$, we have that $M:=\mathcal U(\mathfrak s)(\mathfrak u\otimes Q_\mu)\subseteq\ker B$. 
Since $\mathfrak{s}$ is simple, $\mathcal{U}(\mathfrak{s})Q_\mu = \mathfrak{s}$ and hence, $\mathcal{U}(\mathfrak{s})(\mathfrak{s}\otimes Q_\mu) = \mathfrak{s}\otimes \mathfrak{s}$, so the quotient module $(\mathfrak{s}\otimes\mathfrak{s})/M$ is generated by the class $q:=[Q_{-\mu}\otimes Q_\mu]$. 
Note that $B(Q_{-\mu}, Q_\mu) = \frac{1}{2}B(H_\mu, H_\mu) \neq 0$, so $q\neq 0$. 

Now, let $X$ be an arbitrary positive root vector of $\mathfrak{s}$. 
Since $Q_\mu$ is a highest root vector, $[X,Q_\mu]=0$.
It follows that $\operatorname{ad}_X$ and $\operatorname{ad}_{Q_\mu}$ commute, which implies that
$$
\operatorname{ad}_{Q_\mu}^2([X,Q_{-\mu}])
=[X,\operatorname{ad}_{Q_\mu}^2(Q_{-\mu})]=-2[X,Q_\mu]=0,
$$
so $[X,Q_{-\mu}]\in\mathfrak u$. 
Note that $X(Q_{-\mu}\otimes Q_\mu)=[X,Q_{-\mu}]\otimes Q_\mu\in M$. Thus, $Xq=0$ in $(\mathfrak s\otimes\mathfrak s)/M$, so $q$ is the highest weight vector of the representation $(\mathfrak s\otimes\mathfrak s)/M$ of $\mathfrak{s}$.  
Further, for any element $H$ of the Cartan subalgebra of~$\mathfrak{s}$,
$H(Q_{-\mu}\otimes Q_\mu)=0$, 
so the representation $(\mathfrak{s}\otimes \mathfrak{s})/M$ of $\mathfrak{s}$ has zero as its highest weight (i.e., the representation is trivial). 
It follows that $(\mathfrak{s} \otimes \mathfrak{s})/M = \mathcal{U}(\mathfrak{s})q = \C q$. Thus, both $M$ and $\ker B$ have codimension one in $\mathfrak s\otimes\mathfrak s$.
Together with the inclusion $M\subseteq\ker B$, we conclude that $M=\ker B$.
\end{proof}

With the lemma above, we  establish the following result:

\begin{proposition}\label{prop:adjoint}
For any $\ell=1,\ldots,k$, $\Phi^\ell\frakg_L\subseteq S+(\frakg_\ell)_R$.
\end{proposition}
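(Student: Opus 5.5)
We view $\Phi^\ell$ concretely through the adjoint representation. By the discussion before Lemma~\ref{lem:coefficients}, $\Phi^\ell$ is spanned by the functions $\phi_{uv^*}$ with $u\in\frakg_\ell$ and $v^*\in\frakg_\ell^*$. The rule $L_Y\phi_{uv^*}=\phi_{(\operatorname{ad}_Y u)v^*}$ shows that $\frakg_L$ acts only on the factor $u$, and it does so by the adjoint action. Hence, for a fixed $v^*$, the map
$$u\otimes X\mapsto \phi_{uv^*}L_X,\qquad u\in\frakg_\ell,\ X\in\frakg_\ell,$$
identifies $\frakg_\ell\otimes\frakg_\ell$ with a subspace of $\Phi^\ell(\frakg_\ell)_L$. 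By the formula $[L_Y,\phi L_X]=(L_Y\phi)L_X+\phi L_{[Y,X]}$, this identification intertwines the action of $L_Y$, $Y\in\frakg_\ell$, with the tensor-product action of $\mathfrak{s}:=\frakg_\ell$ on $\mathfrak s\otimes\mathfrak s$ used in Lemma~\ref{lem:adjoint-tensor}. Since $S$ is closed under $\frakg_L$, once the images of $\mathfrak u\otimes Q_\ell$ lie in $S$, that lemma will put all of $\ker B\otimes v^*$ into $S$. The one-dimensional complement $\C C_0$ will then be absorbed by $(\frakg_\ell)_R$.

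The first step is to get $\phi L_{Q_\ell}\in S$ for $\phi\in\Phi_{\mathfrak u}$, where $\mathfrak u=\ker\operatorname{ad}_{Q_\ell}^2\subseteq\frakg_\ell$. Because $Q$ acts on the summand $\frakg_\ell$ of~\eqref{eq:defV} as $\operatorname{ad}_{Q_\ell}$, we have $\mathfrak u\subseteq\mathbb U$. Lemma~\ref{lem:coefficients} then gives $\Phi_{\mathfrak u}L_Q\subseteq S$, but $L_Q=\sum_j L_{Q_j}$, so the components $j\neq\ell$ have to be stripped off.

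\begin{itemize}
\item For $Y\in\frakg_j$ with $j\neq\ell$, we have $L_Y\phi=0$ for every $\phi\in\Phi^\ell$, because $\frakg_j$ acts trivially on $\frakg_\ell$. Therefore $[L_Y,\phi L_Q]=\phi L_{[Y,Q_j]}\in S$.
\item The ideal argument of Theorem~\ref{thm:degreeone}, applied to $\{X\in\frakg_j\mid\Phi_{\mathfrak u}L_X\subseteq S\}$, shows this set is an ideal of $\frakg_j$. It is nonzero, since it contains $[\frakg_j,Q_j]\neq0$. Hence $\Phi_{\mathfrak u}(\frakg_j)_L\subseteq S$.
\item Subtracting these components from $\Phi_{\mathfrak u}L_Q\subseteq S$ yields $\Phi_{\mathfrak u}L_{Q_\ell}\subseteq S$.
\end{itemize}

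For $j\ne\ell$ we can now fill out all of $\Phi^\ell(\frakg_j)_L$. Apply $\mathcal U((\frakg_\ell)_L)$ to $\Phi_{\mathfrak u}(\frakg_j)_L\subseteq S$; since $[\frakg_\ell,\frakg_j]=0$, this acts only on the coefficient. Simplicity of $\frakg_\ell$ gives $\mathcal U(\frakg_\ell)\mathfrak u=\frakg_\ell$, so we obtain $\Phi^\ell(\frakg_j)_L\subseteq S$.

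For the $\frakg_\ell$ part we use the intertwining above. Applying $\mathcal U((\frakg_\ell)_L)$ to $\Phi_{\mathfrak u}L_{Q_\ell}$ and invoking Lemma~\ref{lem:adjoint-tensor}, we get that for every $v^*$, $\sum_i\phi_{u_iv^*}L_{X_i}\in S$ whenever $\sum_i u_i\otimes X_i\in\ker B$. It remains to handle the Casimir element $C_0=\sum_i X_i\otimes X^i$. We plan to show by direct computation that, for each $v^*$, $\sum_i\phi_{X_iv^*}L_{X^i}$ equals a right-invariant field, up to a nonzero constant (depending on the trace normalization):
$$\textstyle\sum_i\phi_{X_iv^*}L_{X^i}=c\,R_{Y},\qquad\text{where } Y\in\frakg_\ell \text{ is the $B$-dual of } v^*.$$
The reason is that $\phi_{X_iv^*}(x)=v^*(\operatorname{Ad}_xX_i)$. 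Expanding $x^{-1}Yx$ in the dual bases then gives $x^{-1}Yx=\sum_i B(\operatorname{Ad}_{x^{-1}}Y,X_i)X^i$, so $R_Y(x)=\sum_i B(Y,\operatorname{Ad}_xX_i)\,L_{X^i}(x)$, which is of the required form. Combining this with $\mathfrak s\otimes\mathfrak s=\ker B\oplus\C C_0$ yields $\Phi^\ell(\frakg_\ell)_L\subseteq S+(\frakg_\ell)_R$. Together with the $j\ne\ell$ part, this proves $\Phi^\ell\frakg_L\subseteq S+(\frakg_\ell)_R$.

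The main obstacle I expect is this bookkeeping. One has to confirm that the matrix coefficients of the summand $\frakg_\ell$ of $\mathbb V$ match the tensor model with the correct conventions, namely left action on $u$ and the pairing with $v^*$. One also has to verify that the Casimir contraction really produces $R_Y$ rather than some other invariant field; I would fix $v^*=B(Y,\cdot)$ and check it directly on a basis.
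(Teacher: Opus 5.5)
Your proposal is correct and matches the paper's proof essentially step by step. You strip the $j\neq\ell$ components of $\Phi_{\mathfrak u}L_Q$ with the ideal argument, propagate with $\mathcal U((\frakg_\ell)_L)$, and then use Lemma~\ref{lem:adjoint-tensor} for $\ker B$ and the Casimir contraction for the $\C C_0$ direction; fixing $v^*$ is just the paper's map $\Psi$ on $(\frakg_\ell\otimes\frakg_\ell)\otimes\frakg_\ell^*$ taken one $v^*$ at a time. The normalization you worried about is trivial: $\phi_{uv^*}(x)=v^*(\operatorname{Ad}(x)u)$ exactly, so $c=1$ and the Casimir term is precisely $R_Y$.
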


\begin{proof}
Recall that $G$ acts on the summand $\frakg_\ell$ of $\mathbb V$ by $\operatorname{Ad}$; in particular, $Q$ acts on $\frakg_\ell$ as $\operatorname{ad}_{Q_\ell}$, and $\frakg_j$ acts trivially on $\frakg_\ell$ for any $j\neq\ell$.
Let $\mathbb U^\ell:=\ker(\operatorname{ad}_{Q_\ell}^2|_{\frakg_\ell})$, which is nonzero, as it contains $Q_\ell$.
Since $\mathbb U^\ell=\frakg_\ell\cap\ker Q^2\subseteq\mathbb U$, by Lemma~\ref{lem:coefficients} we obtain that $\Phi_{\mathbb U^\ell}L_Q\subseteq\Phi_{\mathbb U}L_Q\subseteq S$.
For any $j\neq\ell$, the vector fields in $(\frakg_j)_L$ annihilate $\Phi_{\mathbb U^\ell}$, because $L_Y\phi_A=\phi_{YA}=0$ for any $Y\in\frakg_j$ and any $A\in\mathcal A(\frakg)$ with $\operatorname{im}A\subseteq\frakg_\ell$.
Thus, for any $Y\in\frakg_j$ and any $\phi\in\Phi_{\mathbb U^\ell}$,
$$
\phi L_{[Y,Q_j]}=[L_Y,\phi L_Q]-(L_Y\phi)L_Q=[L_Y,\phi L_Q]\in S.
$$
Using the same arguments in the proof of Proposition~\ref{prop:minuscule}, we have that $\mathfrak i_j:=\{X\in\frakg_j\mid\Phi_{\mathbb U^\ell}L_X\subseteq S\}$ is an ideal of $\frakg_j$ containing $[\frakg_j,Q_j]\neq0$. Thus, $\mathfrak i_j=\frakg_j$, so $\Phi_{\mathbb U^\ell}(\frakg_j)_L\subseteq S$ for all $j\neq\ell$.
Since $\phi L_{Q_\ell}=\phi L_Q-\sum_{j\neq\ell}\phi L_{Q_j}$ for any $\phi\in\Phi_{\mathbb U^\ell}$, we also obtain that $\Phi_{\mathbb U^\ell}L_{Q_\ell}\subseteq S$.

Since $\mathbb U^\ell\neq0$ and the adjoint representation of $\frakg_\ell$ is irreducible, $\mathcal U(\frakg_\ell)\mathbb U^\ell=\frakg_\ell$.
Note that $\Phi_{\mathbb U^\ell}$ (resp., $\Phi^\ell$) is spanned by the functions $\phi_{uv^*}$, for $u\in\mathbb U^\ell$ (resp., $u\in\frakg_\ell$) and $v^*\in\frakg_\ell^*$.  
Since $L_Y\phi_{uv^*}=\phi_{[Y,u]v^*}$ for any $Y\in\frakg_\ell$, we have that $\mathcal U((\frakg_\ell)_L)\Phi_{\mathbb U^\ell}=\Phi^\ell$.
For $j\neq\ell$ and for $Y\in\frakg_\ell$, $X\in\frakg_j$, and $\phi\in\Phi$, 
we have that $[L_Y,\phi L_X]=(L_Y\phi)L_X$. It follows that
$$
\Phi^\ell(\frakg_j)_L=\bigl(\mathcal U((\frakg_\ell)_L)\Phi_{\mathbb U^\ell}\bigr)(\frakg_j)_L=\mathcal U((\frakg_\ell)_L)\bigl(\Phi_{\mathbb U^\ell}(\frakg_j)_L\bigr)\subseteq S,
$$
where the inclusion holds because $\Phi_{\mathbb U^\ell}(\frakg_j)_L\subseteq S$ and $S$ is closed under $\frakg_L$.

It remains to show that $\Phi^\ell(\frakg_\ell)_L\subseteq S+(\frakg_\ell)_R$.
For $u,X\in\frakg_\ell$ and $v^*\in\frakg_\ell^*$, let
$
\Psi((u\otimes X)\otimes v^*):=\phi_{uv^*}L_X$, 
so 
$$
\Psi((u\otimes X)\otimes v^*)(x)=v^*(\operatorname{Ad}(x)u)L_X(x).
$$
It is clear that $\Psi$ is an isomorphism between $(\frakg_\ell\otimes\frakg_\ell)\otimes\frakg_\ell^*$ and $\Phi^\ell(\frakg_\ell)_L$.
For any $Y\in\frakg_\ell$, the identities $[L_Y,\phi L_X]=(L_Y\phi)L_X+\phi L_{[Y,X]}$ and $L_Y\phi_{uv^*}=\phi_{[Y,u]v^*}$ give rise to
$$
[L_Y,\Psi((u\otimes X)\otimes v^*)]
=\Psi\bigl(([Y,u]\otimes X+u\otimes[Y,X])\otimes v^*\bigr).
$$
Note that $\Phi_{\mathbb U^\ell}L_{Q_\ell}=\Psi((\mathbb U^\ell\otimes Q_\ell)\otimes\frakg_\ell^*)$.
Since $S$ is closed under $(\frakg_\ell)_L$, we obtain that
$$
\Psi\bigl((\mathcal U(\frakg_\ell)(\mathbb U^\ell\otimes Q_\ell))\otimes\frakg_\ell^*\bigr)\subseteq S.
$$
Then, by Lemma~\ref{lem:adjoint-tensor} with $\mathfrak s=\frakg_\ell$, $\Psi((\ker B)\otimes\frakg_\ell^*)\subseteq S$, where $B$ is the Killing form of $\frakg_\ell$.

Let $\{X_i\}$ and $\{X^i\}$ be two bases of $\frakg_\ell$, dual to each other with respect to $B$. 
Let $C_0:=\sum_iX_i\otimes X^i$ be the Casimir element. 
For any $Y\in\frakg_\ell$, let $v^*_Y:=B(Y,\cdot)$.
Since $B$ is $\operatorname{Ad}(G)$-invariant,
$$
\sum_iB(Y,\operatorname{Ad}(x)X_i)X^i=\sum_iB(\operatorname{Ad}(x^{-1})Y,X_i)X^i=\operatorname{Ad}(x^{-1})Y.
$$
Then, the value of $\Psi(C_0\otimes v^*_Y)$ at $x$ is $x\operatorname{Ad}(x^{-1})Y=Yx=R_Y(x)$.
Since $B$ is nondegenerate, every element of $\frakg_\ell^*$ has the form $v^*_Y$ for some $Y\in \frakg_\ell$.  
Using the fact that $\ker B \oplus \C C_0 = \frakg_\ell \otimes \frakg_\ell$ and the fact that $\Psi$ is an isomorphism between $(\frakg_\ell\otimes\frakg_\ell)\otimes\frakg_\ell^*$ and $\Phi^\ell(\frakg_\ell)_L$, 
we conclude that $\Phi^\ell(\frakg_\ell)_L\subseteq S+(\frakg_\ell)_R$.
\end{proof}

With the propositions above, we now prove Theorem~\ref{thm:degree-one}.

\begin{proof}[Proof of Theorem~\ref{thm:degree-one}]
By Propositions~\ref{prop:minuscule} and~\ref{prop:adjoint} and the decomposition of $\Phi$, we have that $\Phi\frakg_L\subseteq S+\frakg_R$.
By Lemma~\ref{lem:firsttwo}, $\frakg_L\oplus\frakg_R=\operatorname{Lie}\{W_1,W_2\}$, so $S=\mathcal U(\frakg_L\oplus\frakg_R)W_3\subseteq\operatorname{Lie}\{W_1,W_2,W_3\}$.
We conclude that $\Phi\frakg_L\subseteq S+\frakg_R\subseteq\operatorname{Lie}\{W_1,W_2,W_3\}$.
\end{proof}

\section{Proofs of the main results}\label{sec:proof-main-result}

Let $W_1, W_2\in \xalg(G)$ be given as in Lemma~\ref{lem:firsttwo}, and $W_3\in \xalg(G)$ be given in~\eqref{eq:defineW3real} for $\F=\R$, and in~\eqref{eq:defineW3complex} for $\F=\C$. 

\subsection{Proof of Theorem~\ref{thm:main}}
\label{sec:proof-main}

Let $\Phi''$ be the subspace of constant functions in~$\Phi$ (so that $\Phi''=0$ if $\F=\C$). Since $\Phi$ is a finite-dimensional $\frakg$-module and $\frakg$ is semi-simple, complete reducibility yields a $\frakg$-invariant decomposition $\Phi=\Phi'\oplus\Phi''$. Then, $\Phi'$ does not contain any nonzero constant function. Since $\mathcal{A}(\Phi')$ is unital and $\Phi''\subseteq\F\bfo$, we have $\mathcal{A}(\Phi')=\mathcal{A}(\Phi)$.
By Theorem~\ref{thm:liealgebra}, Theorem~\ref{thm:degreeone} (for $\F=\R$), and Theorem~\ref{thm:degree-one} (for $\F=\C$), $\mathcal{A}(\Phi')\frakg_L\subseteq \operatorname{Lie}\{W_1,W_2,W_3\}$.
Also, since each $W_i$ is algebraic, we have that $\operatorname{Lie}\{W_1,W_2,W_3\} \subseteq \xalg(G)$.
It now remains to show that $\mathcal{A}(\Phi)\frakg_L = \xalg(G)$. It is clear that any vector field in $\mathcal{A}(\Phi)\frakg_L$ is algebraic. Conversely,
given a $V\in \xalg(G)$, we show below that $V\in \mathcal{A}(\Phi)\frakg_L$.
Since $G$ is a connected, semi-simple matrix group, $\det(x) = 1$ for all $x\in G$. Thus, $x^{-1} = \operatorname{adj}(x)$. It follows that $x^{-1} V(x)\in \frakg$ is algebraic.
Let $X_1,\ldots, X_m$ be a basis of $\frakg$. Then, there exist coefficients $a_1,\ldots, a_m\in \mathcal{A}(\Phi)$ such that
$x^{-1} V(x) = \sum_{i = 1}^m a_i(x) X_i$,
which implies that $V = \sum_{i = 1}^m a_iL_{X_i}\in \mathcal{A}(\Phi)\frakg_L$.
\hfill{\qed}

\begin{remark}
If every irreducible summand of $\mathbb V$ is nontrivial, as is the case for the representation~\eqref{eq:defV}, 
then $\Phi$ contains no nonzero constant function, so $\Phi''=0$ and $\Phi'=\Phi$.
To wit, by Lemma~\ref{lem:trace-coefficient-cyclic}, every $\phi\in\Phi$ has the form $\phi_A$ for some $A\in\mathcal A(\frakg)$, and $A\mapsto\phi_A$ is injective.
If $\phi_A$ is constant, then $\phi_{XA}=L_X\phi_A=0$ and hence $XA=0$ for all $X\in\frakg$.
Thus, $\frakg$ acts trivially on $\operatorname{im}A$, which is therefore a trivial sub-representation of $\mathbb V$ and hence zero.
It follows that $A=0$, i.e., $\phi_A=0$.
\end{remark}

\subsection{Proof of Corollary~\ref{cor:B}}
\label{sec:consequences}

The proof, for either $\F = \R$ or $\F = \C$, builds upon~\cite[Theorem~2.5]{Varolin01}, which we recall below. 
Let $M$ be a smooth (resp., complex) manifold, $\mathfrak X(M)$ be the space of smooth (resp., holomorphic) vector fields on~$M$, and $\operatorname{Diff}(M)$ be the group of diffeomorphisms (resp., biholomorphisms) of~$M$.
For a family $\{V_\alpha\}_{\alpha\in \mathcal{I}}$ of $\R$-complete vector fields in $\mathfrak X(M)$, let $\mathcal{V}$ be the subgroup of $\operatorname{Diff}(M)$ generated by $e^{tV_\alpha}$, for $t\in\R$ and $\alpha\in \mathcal{I}$ and $\mathfrak{v}$ be the Lie algebra generated by the $\{V_\alpha\}_{\alpha\in \mathcal{I}}$. 
We denote by $\overline{\mathcal{V}}$ (resp., $\overline{\mathfrak{v}}$) the closure of $\mathcal{V}$ in $\operatorname{Diff}(M)$ (resp., $\mathfrak{v}$ in $\mathfrak{X}(M)$) with respect to the compact-open topology. 
For a $\mathrm{C}^1$ path $\{\Psi_t\}_{t\in[0,1]}$ in $\operatorname{Diff}(M)$, i.e., $(t,x)\mapsto\Psi_t(x)$ is of class $\mathrm{C}^1$, let
\begin{equation}\label{eq:defUt}
U_t:=\partial_t\Psi_t\circ\Psi_t^{-1}\quad \mbox{for } t\in[0,1].
\end{equation}

The following theorem adapted from~\cite[Theorem~2.5]{Varolin01} was stated for a complex manifold $M$.  
Its proof relies on the use of Gronwall's inequality, \cite[Proposition~2.4]{Varolin01}, and~\cite[Theorem~2.3]{Varolin01}, the last of which builds directly upon~\cite[Theorem~2.1.26]{AbrahamMarsden}  in the smooth setting. Thus, the arguments in the proof hold for smooth vector fields, so the same statement applies to smooth manifolds.

\begin{theorem}[cf.~{\cite[Theorem~2.5]{Varolin01}}]\label{thm:varolin}
If $\Psi_0=\operatorname{id}_M$ and $U_t\in\overline{\mathfrak{v}}$ for all $t\in[0,1]$, then $\Psi_1\in \overline{\mathcal{V}}$.
\end{theorem}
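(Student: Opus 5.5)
We follow Varolin's argument, adapted to the smooth category. The statement to prove is that $\Psi_1$ lies in the compact-open closure of the group $\mathcal{V}$ generated by the flows of the $V_\alpha$. Fix a compact set $K\subseteq M$ and a tolerance. The plan is to approximate $\Psi_1$ by compositions of short-time flows of vector fields in $\mathfrak{v}$, and then to approximate each such flow by elements of $\mathcal{V}$.

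\textbf{Step 1: time discretization.} Choose a partition $0=t_0<t_1<\cdots<t_N=1$. The time-dependent field $U_t$ is continuous in $t$ because the path is $\mathrm{C}^1$. Hence $\Psi_1$ is approximated on $K$ by the composition $e^{(t_N-t_{N-1})U_{t_{N-1}}}\circ\cdots\circ e^{(t_1-t_0)U_{t_0}}$. This is the standard convergence of a piecewise-autonomous (Euler-type) approximation of a nonautonomous flow. It follows from Gronwall's inequality applied on a compact neighborhood of $\bigcup_t\Psi_t(K)$, which keeps all intermediate points in a fixed compact set for fine partitions.

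\textbf{Step 2: replace by Lie algebra elements.} Each $U_{t_j}$ lies in $\overline{\mathfrak{v}}$, so choose $Y_j\in\mathfrak{v}$ close to $U_{t_j}$ on that compact neighborhood. Continuous dependence of flows on the vector field (Gronwall again) shows that $e^{sY_j}$ is defined near the relevant compact set for the small time $s=t_{j+1}-t_j$. It also shows that $e^{sY_j}$ is close to $e^{sU_{t_j}}$ there. A triangle inequality then shows that the composition of the $e^{sY_j}$ approximates $\Psi_1$ on $K$.

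\textbf{Step 3: flows of Lie combinations.} We must show that each local flow $e^{sY}$, $Y\in\mathfrak{v}$, restricted to a compact set, lies in the closure of $\mathcal{V}$. Induct on the bracket structure of $Y$, using the classical product formulas
\[
e^{s(X+Y)}=\lim_{n\to\infty}\bigl(e^{\frac{s}{n}X}\circ e^{\frac{s}{n}Y}\bigr)^n,\qquad e^{s^2[X,Y]}=\lim\text{ of commutators of short flows},
\]
together with scalar multiples $e^{s(cX)}=e^{(cs)X}$. This is Varolin's Proposition~2.4 and Theorem~2.3. In the smooth case these rest on the Lie–Trotter and commutator formulas for local flows in Abraham–Marsden, Theorem~2.1.26, which hold uniformly on compact sets. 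Closedness of $\overline{\mathcal{V}}$ under composition and limits (on compacta) closes the induction.

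\textbf{Main obstacle.} The main difficulty is the bookkeeping that makes all domains of definition valid. The fields $Y\in\mathfrak{v}$ need not be complete, so their flows exist only locally. Each approximation must therefore be carried out on a slightly larger compact set than the previous one, with time steps small enough that the trajectories stay inside it. For Corollary~\ref{cor:B} we also need $\mathrm{C}^\infty$-closeness rather than just $\mathrm{C}^0$-closeness. I would handle this by noting that the derivatives of flows satisfy the variational equations, to which the same Gronwall estimates apply, so the convergence in Steps 1–3 upgrades to the $\coinf$-topology. In the holomorphic case this upgrade is automatic by Cauchy's estimate.
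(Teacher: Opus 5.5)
Your proposal is correct and follows the same route the paper takes: the paper gives no independent argument but defers to Varolin's proof (Euler-type discretization of the isotopy controlled by Gronwall's inequality, replacement of each $U_{t_j}$ by a nearby element of $\mathfrak{v}$, and the Lie--Trotter and commutator formulas, via Abraham--Marsden's Theorem~2.1.26 in the smooth case, to pass from flows of elements of $\mathfrak{v}$ to elements of $\mathcal{V}$), noting only that each step remains valid for smooth vector fields. Your closing remark on upgrading to the $\coinf$-topology is not part of this statement, and it would require $U_t$ to lie in the $\coinf$-closure of $\mathfrak{v}$ rather than merely its compact-open closure, since otherwise Step~2 gives no control on derivatives; the paper avoids this by applying the present $\mathrm{C}^0$ statement on $J^k(G,G)$ to the complete prolonged fields $j^kW_i$, using $\coinf$-density of $\operatorname{Lie}\{W_1,W_2,W_3\}$ and continuity of $j^k$.
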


We present below arguments that enable the use of Theorem~\ref{thm:varolin} for complex and real cases, and complete the proof of Corollary~\ref{cor:B}.

\subsubsection{Proof for the complex case}\label{sec:B-complex}

Let $g\in\operatorname{Diff}_0(G)$, and $\Psi$ be a smooth isotopy in $\operatorname{Diff}(G)$ with $\Psi_0=\operatorname{id}_G$ and $\Psi_1=g$. Let $U_t$ be given as in~\eqref{eq:defUt}.  
Since $G\subseteq\operatorname{End}(\mathbb V)$, the maps $(\Psi_{t+h}-\Psi_t)/h:G\to\operatorname{End}(\mathbb V)$, for $h\neq0$ with $t+h\in[0,1]$, are holomorphic, and since $\Psi$ is smooth on $[0,1]\times G$, they converge to $\partial_t\Psi_t$ uniformly on compact sets as $h\to0$.
Thus, $\partial_t\Psi_t$ is holomorphic and hence, so is $U_t$.
Each vector field in the family $\{cW_i\mid c\in\C,\ i=1,2,3\}$ is $\R$-complete. It is clear that the subgroup $\mathcal{V}$ generated by $e^{tcW_i}$ is $\mathcal{W}$ and that the Lie algebra $\mathfrak{v}$ generated by this family is $\operatorname{Lie}\{W_1,W_2,W_3\}$, which is $\xalg(G)$ by Theorem~\ref{thm:main}. Note that $\xalg(G)$ is dense in $\mathfrak X(G)$ (see, e.g.,~\cite[proof of Theorem~3.4]{TV06}). 
Thus, $U_t\in \overline{\mathfrak{v}}= \mathfrak{X}(G)$ and hence, by Theorem~\ref{thm:varolin}, $g\in \overline{\mathcal{V}}=\overline{\mathcal W}$, which is the closure of $\mathcal W$ with respect to the compact-open topology (by Cauchy's estimate, the $\coinf$-topology as well).
\hfill{\qed}

\subsubsection{Proof for the real case}\label{sec:B-real}

We start with the following lemma:

\begin{lemma}\label{lem:smooth-Vfieldapproximate}
It holds that $\operatorname{Lie}\{W_1,W_2,W_3\}$ is dense in $\mathfrak X(G)$ with respect to the $\coinf$-topology. 
\end{lemma}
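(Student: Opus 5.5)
By Theorem~\ref{thm:main}, $\operatorname{Lie}\{W_1,W_2,W_3\}=\xalg(G)=\mathcal{A}(\Phi)\frakg_L$. So it suffices to show that $\mathcal{A}(\Phi)\frakg_L$ is dense in $\mathfrak X(G)$ for the $\coinf$-topology.

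The first step is a reduction to functions. Let $X_1,\ldots,X_m$ be a basis of $\frakg$. Since the $L_{X_i}$ form a global frame of $TG$, every smooth vector field can be written uniquely as $V=\sum_{i=1}^m a_iL_{X_i}$ with $a_i\in \mathrm{C}^\infty(G)$. The coefficients are obtained by $a_i(x)=\ell_i(x^{-1}V(x))$ for fixed linear functionals $\ell_i$ on $\frakg$. The map $(a_1,\ldots,a_m)\mapsto\sum_i a_iL_{X_i}$ is a topological isomorphism $\mathrm{C}^\infty(G)^m\to\mathfrak X(G)$ for the compact-open $\mathrm{C}^\infty$-topologies. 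This holds because the $L_{X_i}$ are smooth and the frame has a smooth inverse, so derivatives of $V$ on a compact $K$ are controlled by derivatives of the $a_i$ on $K$, and conversely. Density of $\mathcal{A}(\Phi)\frakg_L$ is therefore equivalent to density of $\mathcal{A}(\Phi)$ in $\mathrm{C}^\infty(G)$.

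The second step is to prove that density. Since $G$ is closed in $\operatorname{End}(\mathbb{V})$, it is an embedded closed submanifold of the Euclidean space $\operatorname{End}(\mathbb{V})\cong\R^N$. Moreover, $\mathcal{A}(\Phi)$ is exactly the set of restrictions to $G$ of polynomials in the matrix entries, because $\Phi$ is spanned by the coordinate functions $x\mapsto \tr(xA)$. Fix $f\in \mathrm{C}^\infty(G)$, a compact set $K\subseteq G$, an order $k$, and $\epsilon>0$. I will proceed in three moves.
\begin{enumerate}
\item Extend $f$ to a smooth function $\tilde f$ on $\R^N$, using a tubular neighborhood retraction or a partition of unity; multiplying by a cutoff, we may take $\tilde f$ compactly supported.
\item Approximate $\tilde f$ in $\mathrm{C}^k$ on a compact ball containing $K$ by polynomials $p$. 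This is the standard Weierstrass theorem with derivatives, obtained for instance by convolving with polynomial heat or Landau kernels, or by Nachbin's theorem.
\item Restrict to $G$. Derivatives along $G$ on $K$ are tangential derivatives, hence bounded by the ambient $\mathrm{C}^k$ norms (in charts, by the chain rule with the fixed smooth embedding). So $p|_G$ approximates $f$ in the $k$-th order seminorm on $K$.
\end{enumerate}
Since $p|_G\in\mathcal{A}(\Phi)$, this gives the density.

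The main point needing care is the topological-isomorphism claim together with the chain-rule comparison of $\mathrm{C}^k$ seminorms, and in particular that the jet-based metric $d_{k,K}$ matches these seminorms. Both are routine once we fix charts and a finite cover of $K$, so I expect no real obstacle; the only genuine input is the extension and Weierstrass approximation in the second step. Combining the two steps, $\operatorname{Lie}\{W_1,W_2,W_3\}=\mathcal{A}(\Phi)\frakg_L$ is dense in $\mathfrak X(G)$.
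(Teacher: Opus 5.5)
Your proposal is correct and follows the paper's proof: you use $\operatorname{Lie}\{W_1,W_2,W_3\}=\mathcal{A}(\Phi)\frakg_L$ and the global frame $L_{X_1},\ldots,L_{X_m}$ to reduce to density of $\mathcal{A}(\Phi)$ in $\mathrm{C}^\infty(G)$. The only difference is how that density is obtained. The paper gets it in one step from Nachbin's theorem applied on $G$ itself, after checking that $\Phi\cup\{\bfo\}$ strongly separates points and that the coordinate functions $c_{ij}$ separate tangent vectors; you instead extend to $\operatorname{End}(\mathbb{V})$, use the Euclidean Weierstrass theorem with derivatives, and restrict, which is equally valid and slightly more elementary, relying only on $G$ being a closed embedded submanifold (closed in $\operatorname{End}(\mathbb{V})$ because $\det\equiv 1$ on $G$).
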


\begin{proof}
The set $\Phi \cup \{\bfo\}$ strongly separates points, i.e., $\bfo$ is nowhere zero and for any two distinct points $x, y\in G$, there exists a matrix coefficient $\phi\in \Phi$ such that $\phi(x)\neq \phi(y)$. Also, $\Phi$ separates tangent vectors. To wit, we fix a basis of $\mathbb{V}$ and let $c_{ij}(A)$ be the $ij$th entry of $A\in \operatorname{End}(\mathbb{V})$.
Given a nonzero $A\in T_xG$, say $c_{ij}(A)\neq 0$, the matrix coefficient $c_{ij}: G\to \R$  satisfies $d_x c_{ij}(A) = c_{ij}(A)$.
Thus, by Nachbin's Theorem~\cite[Theorem~1.2.1]{Llavona}, $\mathcal{A}(\Phi)$ is dense in $\mathrm{C}^\infty(G)$ with respect to the $\coinf$-topology.
Next, let $X_1,\ldots, X_m$ be a basis of $\frakg$, so $L_{X_1},\ldots, L_{X_m}$ is a global frame on $G$. Then, for any $V\in \mathfrak{X}(G)$, there exist $a_1,\ldots, a_m\in \mathrm{C}^\infty(G)$ such that $V = \sum_{i = 1}^m a_i L_{X_i}$.
It follows that
$\mathfrak{X}(G) = \operatorname{cl}_{\coinf}(\mathcal{A}(\Phi)\frakg_L) = \operatorname{cl}_{\coinf}(\operatorname{Lie}\{W_1,W_2,W_3\})$, where $\operatorname{cl}_{\coinf}(\cdot)$ denotes the closure of a set with respect to the $\coinf$-topology.
\end{proof}

Given a vector field $V\in \mathfrak{X}(G)$ and given~$k\in \N_0$, the {\it vertical prolongation} of $V$, which we denote by $j^kV$ (following the notation of~\cite{krupka2001}), is the smooth vector field on $J^k(G,G)$ such that
$$
j^kV(j_x^k f) = \left. \frac{d}{dt} \right |_{t = 0} j^k_x(e^{tV}\circ f),
$$
for any smooth map $f: G\to G$. It follows that
\begin{equation}\label{eq:exponentialofprolongV}
e^{t j^kV}(j_x^kf)=j_x^k(e^{tV}\circ f).
\end{equation}
In particular, if $V$ is complete, then so is $j^kV$.

Note that the map $j^k :\mathfrak{X}(G)\to \mathfrak{X}(J^k(G,G))$, defined by sending $V$ to $j^kV$, is linear and continuous, where $\mathfrak{X}(G)$ is equipped with the $\coinf$-topology 
and $\mathfrak{X}(J^k(G,G))$ is equipped with the compact-open topology (one can establish this fact by using the local coordinates of fiber charts~\cite[Section~3]{krupka2001}).
Also, note that $j^k$ is Lie-bracket preserving, i.e., $j^k[U, V] = [j^kU, j^kV]$ for any $U, V\in \mathfrak{X}(G)$. 
Let $\mathfrak{v}\subseteq \mathfrak{X}(J^k(G,G))$ be the Lie algebra generated by $j^kW_1$, $j^kW_2$, and $j^kW_3$. 
Then, 
$$\mathfrak{v}=\{j^k V  \mid V\in \operatorname{Lie}\{W_1,W_2,W_3\}\}.$$  
By Lemma~\ref{lem:smooth-Vfieldapproximate}, $\operatorname{Lie}\{W_1,W_2,W_3\}$ is dense in $\mathfrak X(G)$. Since~$j^k :\mathfrak{X}(G)\to \mathfrak{X}(J^k(G,G))$ is continuous, 
$$j^k(\mathfrak{X}(G))\subseteq \overline{\mathfrak{v}}.$$

For any $h\in\operatorname{Diff}(G)$, let $J^kh$ be the diffeomorphism of $J^k(G,G)$ defined by $J^kh(j^k_xf):=j^k_x(h\circ f)$.
Note that $J^k(h_2\circ h_1)=J^kh_2\circ J^kh_1$ for any $h_1,h_2\in\operatorname{Diff}(G)$. Also, by~\eqref{eq:exponentialofprolongV}, $e^{tj^kV}=J^k(e^{tV})$. 
Let~$\mathcal{V}$ be the subgroup of $\operatorname{Diff}(J^k(G,G))$  generated by $e^{t j^kW_i}$ for $t\in \R$ and $i = 1,2,3$. 
Then, 
$$
\mathcal{V} = \{J^k h\mid h\in\mathcal W\}.
$$

Now, let $g$ be an arbitrary element of $\operatorname{Diff}_0(G)$, $k\in\N_0$, and $\Psi$ be a smooth isotopy in $\operatorname{Diff}(G)$ with $\Psi_0=\operatorname{id}_G$ and $\Psi_1=g$. 
Let $U_t:=\partial_t\Psi_t\circ\Psi_t^{-1}$ be defined in the same way as in~\eqref{eq:defUt}. Since~$\Psi$ is smooth, $U_t\in \mathfrak{X}(G)$ for all $t\in [0,1]$. 
Then, $t\mapsto J^k\Psi_t$ is a smooth isotopy in $\operatorname{Diff}(J^k(G,G))$. We claim that
\begin{equation}\label{eq:prolongisotopy}
\partial_t J^k\Psi_t=(j^kU_t)\circ J^k\Psi_t \quad \mbox{for }t\in[0,1].
\end{equation} 
To wit, note that $J^k\Psi_{t+h}=J^k(\Psi_{t+h}\circ\Psi_t^{-1})\circ J^k\Psi_t$. The $k$-jets of $\Psi_{t+h}\circ\Psi_t^{-1}$ and of $e^{hU_t}$ at any point of $G$ agree up to the first order in $h$, both maps being $\operatorname{id}_G$ at $h=0$ with $h$-derivative $U_t$. It follows that $$\partial_hJ^k(\Psi_{t+h}\circ\Psi_t^{-1})|_{h=0}=\partial_hJ^k(e^{hU_t})|_{h=0}=j^kU_t,$$
which establishes~\eqref{eq:prolongisotopy}.

Finally, since $j^kU_t\in j^k(\mathfrak{X}(G))\subseteq\overline{\mathfrak v}$ for any $t\in[0,1]$, we appeal to Theorem~\ref{thm:varolin} to have that $J^kg\in \overline{\mathcal{V}}$, so
$g$ belongs to the closure of $\mathcal{W}$ with respect to the compact-open $\mathrm{C}^k$-topology. Since this holds for all $k\in \N_0$, we conclude that $g\in \operatorname{cl}_{\coinf}(\mathcal{W})$. 
\hfill{\qed}

\printbibliography

\section*{Appendix}\label{sec:examples}

In the appendix, we let $G$ be a connected, complex simple group, by which we mean a connected complex Lie group whose Lie algebra~$\frakg$ is simple. 
We retain the same notation used in Section~\ref{ssec:complexcase}. 
A group $G$ is said to be {\it adjoint} if its center is trivial.  
Since $\frakg$ is simple, the element~$Q$ introduced in~\eqref{eq:defQ} is now a root vector of the highest root~$\mu$.
We have the following result:

\begin{proposition}\label{prop:square-zero}
Let $G$ be a connected, complex simple group.
Then, there exist a faithful finite-dimensional holomorphic representation $\rho: G\to \operatorname{GL}(\mathbb{V})$ of $G$ and a nonzero $X\in\frakg$ such that $d\rho(X)^2=0$ if and only if $G$ is not an adjoint group of type $A$, $D$, or $E$.
\end{proposition}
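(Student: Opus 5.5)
We prove both directions by reducing to a statement about weights: a nonzero $X\in\frakg$ satisfies $d\rho(X)^2=0$ exactly when $d\rho(X)^2=0$ on each irreducible summand of $\mathbb{V}$. We first reduce to the case where $X$ is a highest root vector $Q$. If $d\rho(X)^2=0$, then $X$ is nilpotent in $\mathbb{V}$. Since $\rho$ is faithful, $d\rho$ is injective, so $X$ is ad-nilpotent and therefore lies in some nilpotent orbit. The closure of every nonzero nilpotent orbit contains the minimal orbit $G\cdot Q$. The set $\{Y\in\frakg : d\rho(Y)^2=0\}$ is closed and $\operatorname{Ad}(G)$-invariant. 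Hence $d\rho(X)^2=0$ implies $d\rho(Q)^2=0$, and the converse is trivial. So the condition is equivalent to $d\rho(Q)^2=0$.

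Next we compute $d\rho(Q)^2$ on an irreducible representation $\mathbb{V}^\nu$ using the $\mathfrak{s}_\mu$-triple $(Q,H_\mu,Q_{-\mu})$. The operator $Q^2$ vanishes on $\mathbb{V}^\nu$ iff every $\mathfrak{s}_\mu$-string has length at most $2$. This holds iff $|(\beta,\mu)|\le 1$ for every weight $\beta$ of $\mathbb{V}^\nu$. Since $H_\mu$ is dominant up to the Weyl group, this reduces to the condition $(\nu,\mu)\le1$ for the highest weight $\nu$, because $(\beta,\mu)\le(\nu,\mu)$ for dominant-orbit reasons. Here $\mu^\vee$ is the highest short coroot, i.e., the highest root of the dual system. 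Writing $\nu=\sum n_i\omega_i$, we get $(\nu,\mu)=\sum n_i c_i$, where the $c_i$ are the coefficients of $\mu^\vee$ in simple coroots. So $Q^2=0$ on $\mathbb{V}^\nu$ iff $\nu=0$ or $\nu=\omega_i$ with $c_i=1$. That last condition says $\omega_i$ is minuscule when $\frakg$ is simply laced (ADE), and "cominuscule-for-the-dual" in general. Concretely: in types $B_n$, $C_n$, $F_4$ and $G_2$ the coroot $\mu^\vee$ is the highest root of the dual system $C_n$, $B_n$, $F_4$, $G_2$. For these it includes simple coroots with coefficient $1$: the vector representation for $B_n$ via $\omega_1$, the standard representation for $C_n$, and short-root-type weights for $F_4$/$G_2$. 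I expect this case bookkeeping to be the main obstacle.

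For the "if" direction we proceed by cases. If $G$ is not adjoint, choose for each nontrivial character of $Z(G)$ the minuscule representation $\mathbb{V}^\lambda$ used in Section~\ref{ssec:complexcase}. In the simply laced case minuscule means $c_i=1$, so $Q^2=0$ on each such summand. If $\frakg$ is not simply laced, we additionally need a summand on which $\operatorname{Ad}$'s kernel is handled. We want a representation with $Q^2=0$ whose kernel is exactly the subgroup we need. In type $C_n$ the standard representation of $\mathrm{Sp}_{2n}$ works, and for adjoint $\mathrm{PSp}$ we use $\Lambda^2$ or an appropriate $\omega_i$ with $c_i=1$ and trivial central character. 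For types $B_n$, $F_4$, $G_2$ we use $\omega_1$ (vector), $\omega_4$ (26-dim) and $\omega_1$ (7-dim) respectively. These are faithful for the adjoint group, since the centers are trivial or the representation is faithful on $\mathfrak{so}_{2n+1}$. For the spin group $B_n$ we add the spin representation, which is minuscule. We then check that the direct sum of the chosen summands has trivial kernel, as in Lemma~\ref{lem:faithful}: it is faithful because some summand has a kernel that is a discrete central subgroup, and the central characters separate $Z(G)$.

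For the "only if" direction, suppose $G$ is adjoint of type $A$, $D$, or $E$. Then every irreducible representation of $G$ has highest weight in $\Lambda_R$. Nonzero dominant elements of $\Lambda_R$ are never minuscule, because the minuscule weight in the zero class is $0$. For simply laced types, $(\nu,\mu)\le1$ with $\nu\ne0$ dominant forces $\nu$ minuscule, since all $c_i$ are positive integers and $\nu=\omega_i$ with $c_i=1$. Hence $Q^2=0$ forces every summand to be trivial, so $\rho$ is trivial and not faithful. This is a contradiction, and by the orbit-closure reduction no nonzero $X$ works.
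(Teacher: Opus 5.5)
Your overall route is valid, and it differs from the paper's in two places.

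\textbf{Reduction to $Q$.} You pass through nilpotency of $X$ and the fact that the minimal orbit lies in the closure of every nonzero nilpotent orbit. The paper instead applies the Borel fixed point theorem directly to $\mathbb{P}(\mathcal{C})$. That is the usual proof of the orbit-closure fact, so the two arguments are equivalent. The paper's version avoids needing Jordan decomposition to see that $X$ is nilpotent.

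\textbf{Classification step.} This is the more substantial difference. The paper cites the lemma that quadratic modules of simply laced algebras have minuscule highest weight, and then uses a separate norm estimate for $\mathbb{V}^{\mu_s}$. You use the $\mathfrak{s}_\mu$-string criterion: $Q^2=0$ on $\mathbb{V}^\nu$ if and only if $(\nu,\mu)\le 1$, i.e.\ $\nu=0$ or $\nu=\omega_i$ with $c_i=1$. This criterion is elementary and self-contained. It handles both directions with one computation and actually classifies the irreducibles on which $Q$ squares to zero.

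\textbf{An error you must fix.} Your bookkeeping contains an error, and it is exactly what your treatment of the adjoint $B_n$, $C_n$, $F_4$, $G_2$ cases rests on. You first call $\mu^\vee$ the highest short coroot, which is correct. You then say ``i.e., the highest root of the dual system,'' which is false in non-simply-laced types. The highest root of $\Delta^\vee$ is the coroot of the highest short root $\mu_s$ of $\Delta$, and $\mu^\vee$ is only the highest \emph{short} root of $\Delta^\vee$.

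This matters. The coefficient of $\alpha_i^\vee$ in the highest root of $\Delta^\vee$ equals $1$ exactly when $\omega_i$ is minuscule. So with your stated identification, the criterion collapses to ``$\nu$ minuscule'' in every type, and every representation you chose for the adjoint non-simply-laced groups would be excluded:
\begin{itemize}
\item $\omega_1$ of $B_n$ has coefficient $2$ in the highest root of $C_n$;
\item the highest roots of $F_4$ and $G_2$ have no coefficient $1$ at all;
\item for $C_n$ only $\omega_1$ would survive, and it does not descend to $\mathrm{PSp}_{2n}$.
\end{itemize}

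Your final list is nevertheless correct. Compute directly $c_i=m_i\|\alpha_i\|^2/\|\mu\|^2$, where $\mu=\sum m_i\alpha_i$. In Bourbaki labeling this gives $c=(1,2,\ldots,2,1)$ for $B_n$, $(1,\ldots,1)$ for $C_n$, $(2,3,2,1)$ for $F_4$, and $(1,2)$ for $G_2$.

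\textbf{A cleaner repair.} All four of your choices are $\mathbb{V}^{\mu_s}$. Since $\mu_s$ is short and $\mu$ is long, $(\mu,\mu_s)=r(\mu_s,\mu)$ with $r\in\{2,3\}$, while $(\mu,\mu_s)(\mu_s,\mu)\le 3$. Hence $(\mu_s,\mu)\le 1$. Together with $\mu_s\in\Lambda_R$, this settles the adjoint case uniformly, and it uses the same representation as the paper.

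\textbf{Smaller points.} Drop the label ``cominuscule-for-the-dual''; it comes from the same confusion. Also, the non-adjoint case needs no type split. A minuscule $\lambda$ gives $(\lambda,\mu)\le 1$ directly from the definition, so $\bigoplus_{\lambda\in\Lambda^*}\mathbb{V}^\lambda$ works in every type, exactly as in the paper.
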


\begin{proof}
Suppose first that such $\rho$ and $X$ exist; then, consider 
the set $$\mathcal{C}:=\{Y\in\frakg\mid d\rho(Y)^2=0\},$$
which contains the nonzero element~$X$. 
Since the entries of $d\rho(Y)^2$ are homogeneous quadratic polynomials in $Y$, $\mathcal C$ is Zariski closed in $\frakg$ and stable under scalar multiplication. 
Let $\mathbb{P}(\mathcal{C})$ be the set of lines in $\frakg$ contained in~$\mathcal C$, i.e., the projectivization of~$\mathcal C$, 
which is a nonempty closed subvariety of the projective space
$\mathbb P(\frakg)$ and hence a complete variety (see, e.g.,~\cite[Sections~1.6 and~6.2]{HumphreysLAG}).

Note that $\mathbb{P}(\mathcal{C})$ is $\operatorname{Ad}(G)$-invariant; indeed, $d\rho(\operatorname{Ad}(x)Y)^2=\rho(x)d\rho(Y)^2 \rho(x)^{-1}$ for any $Y\in \frakg$. 
Given the positive system $\Delta^+$, let $B$ be the Borel subgroup of $\operatorname{Ad}(G)$, which acts on $\mathbb{P}(\mathcal{C})$. 
Note that~$B$ is connected, solvable, and affine algebraic. 
By the Borel fixed point theorem~\cite[Section~21.2]{HumphreysLAG}, the $B$-action has a fixed point. 
Let $\C Y_0$ be the corresponding line in $\mathcal{C}$, stable under the action of~$B$.
It follows that $\operatorname{ad}_{X_\alpha}(Y_0)\in\C Y_0$ and hence, $\operatorname{ad}_{X_\alpha}(Y_0)=0$ for all positive root vectors $X_\alpha$, as $\operatorname{ad}_{X_\alpha}$ is nilpotent. Since $\frakg$ is simple, $Y_0\in\frakg_\mu=\C Q$, so $d\rho(Q)^2=0$. 
It follows that $d\rho(Y)^2=0$ for every $Y$ in the orbit~$\operatorname{Ad}(G)Q$. 
Since $Q$ is a highest root vector, we have that $[Q,[Q,\frakg]]\subseteq\C Q$ and hence, 
\begin{equation}\label{eq:extremalelements}
[Y,[Y,\frakg]]\subseteq\C Y, \quad \mbox{for any } Y\in \operatorname{Ad}(G)Q.
\end{equation} 
Elements $Y$ that satisfy~\eqref{eq:extremalelements} are called extremal elements~\cite{CSUW01}. In particular, these elements include,  for each long root $\alpha\in \Delta$ (with $\|\alpha\| = \|\mu\|$), a root vector in $\frakg_\alpha$. 

Let $\mathbb{V}'$ be an irreducible summand of $\mathbb{V}$ and $\rho': G\to \operatorname{GL}(\mathbb{V}')$ be the representation induced by~$\rho$. 
Let $\tilde \rho'$ be the representation of $\tilde G$ obtained by the pullback of $\rho'$ via the covering map $\tilde G\to G$. Note that $d\tilde \rho'_e = d\rho'_e$, which is a representation of~$\frakg$ on $\mathbb{V}'$.    
The elements of $\operatorname{Ad}(G)Q$ are extremal and square to zero on $\mathbb{V}'$. 
The span of these extremal elements is $\operatorname{Ad}(G)$-invariant and hence, is the entire $\frakg$ (since $\frakg$ is simple).  
It follows that $\mathbb{V}'$ is a {\it quadratic} $\frakg$-module (following the terminology of~\cite[Definition~7.1]{CSUW01}). 

If $\frakg$ is simply laced (i.e., $\frakg$ is of type $A$, $D$, or $E$), then by~\cite[Lemma~7.4]{CSUW01} 
the highest weight of $\mathbb{V}'$ is minuscule.
If, further, $G$ is adjoint, 
then the kernel~$K$ of the covering map $\tilde G\to G$ is $Z(\tilde G)$. 
Thus, $\tilde \rho'(z)$ acts trivially on $\mathbb V'$ for any $z\in Z(\tilde G)$ and hence, the highest weight of $\mathbb V'$ belongs to $\Lambda_R$.  
Since $\frakg$ is simply laced, it is known~\cite[Chapter~VIII, Section~7.3, Proposition~8]{BourbakiLie79} that $\Lambda_R$ does not contain any nonzero minuscule weight, so $\mathbb V'\approx \C$.
Thus, if $G$ is an adjoint group of type $A$, $D$, or $E$, then $\rho$ is trivial, contradicting the hypothesis that $\rho$ is faithful.

Conversely, suppose first that $G$ is not adjoint;
then, $Z(G)=Z(\tilde G)/K$ is nontrivial and hence,  $\Lambda^*\neq\varnothing$. Let $\rho$ be the representation $\bigoplus_{\lambda\in\Lambda^*}\mathbb V^\lambda$ of $G$. 
Since each $\lambda\in\Lambda^*$ is minuscule, every root vector  squares to zero on $\mathbb V^\lambda$. 
Since $Q$ is a highest root vector, $d\rho(Q)^2=0$. 
Also, since $\frakg$ is simple and $\rho$ is nontrivial, $d\rho_e$ is injective. It follows that $\ker\rho$ is a discrete normal subgroup of the connected group $G$, so $\ker\rho\subseteq Z(G)$. 
For any $z\in Z(G)$ with $z\neq e$, there exists a $\lambda\in\Lambda^*$ with $\chi_\lambda(z)\neq1$, and $z$ acts on $\mathbb V^\lambda$ as the scalar multiplication $\chi_\lambda(z)$. Thus, $z\notin\ker\rho$ and hence, $\rho$ is faithful.

Finally, we assume that $G$ is an adjoint group of type $B_n$, $C_n$, $F_4$, or $G_2$. 
Let $\mu_s$ be the highest short root, and $\tilde\rho$ be the irreducible representation $\mathbb{V}$ of $\tilde G$ with $\mu_s$ the highest weight.
Since $\mu_s\in\Lambda_R$, $\tilde \rho(z)$ acts as the identity map on $\mathbb{V}$ for any $z\in Z(\tilde G)$, so $\tilde \rho$ induces the representation $\rho$ of $G$ on $\mathbb{V}$.
By the same arguments as above, $\ker\rho\subseteq Z(G)$, which is trivial and hence, $\rho$ is faithful.
Because the weights of $\rho$ are in the convex hull of the Weyl group orbit of $\mu_s$, every such weight $\nu$ satisfies $\|\nu\|\leq\|\mu_s\|<\|\mu\|$.  
Since $d\rho(Q)^2$ maps the $\nu$-weight space into the $(\nu+2\mu)$-weight space and $\|\nu+2\mu\|\geq2\|\mu\|-\|\nu\|>\|\mu_s\|$, we conclude that $d\rho(Q)^2=0$.
\end{proof}

\end{document}